\documentclass[a4paper,11pt]{article}
\textwidth380pt
\hoffset-40pt
\voffset+0pt
\headsep-20pt
\textheight510pt

\usepackage{amsmath, amsfonts, amscd, amssymb, amsthm, enumerate, xypic}

\def\bfB{\mathbf{B}}

\newcommand{\Ortho}{\text{O}}
\newcommand{\Sp}{\text{Sp}}
\newcommand{\WS}{\mathcal{N}\mathcal{S}}
\newcommand{\WA}{\mathcal{N}\mathcal{A}}
\renewcommand{\epsilon}{\varepsilon}
\newcommand{\modu}{\operatorname{mod}}

\newcommand{\Mat}{\operatorname{M}}
\newcommand{\Mats}{\operatorname{S}}
\newcommand{\Mata}{\operatorname{A}}

\newcommand{\Ker}{\operatorname{Ker}}

\newcommand{\End}{\operatorname{End}}
\newcommand{\ind}{\operatorname{ind}}

\newcommand{\Vect}{\operatorname{span}}
\newcommand{\im}{\operatorname{Im}}

\newcommand{\tr}{\operatorname{tr}}

\newcommand{\rk}{\operatorname{rk}}

\newcommand{\codim}{\operatorname{codim}}
\renewcommand{\setminus}{\smallsetminus}


\def\F{\mathbb{F}}

\def\N{\mathbb{N}}

\renewcommand{\L}{\mathbb{L}}


\def\calA{\mathcal{A}}

\def\calF{\mathcal{F}}
\def\calG{\mathcal{G}}

\def\calN{\mathcal{N}}

\def\calS{\mathcal{S}}

\def\calU{\mathcal{U}}
\def\calV{\mathcal{V}}


\def\lcro{\mathopen{[\![}}
\def\rcro{\mathclose{]\!]}}

\theoremstyle{definition}
\newtheorem{Def}{Definition}
\newtheorem{Not}[Def]{Notation}

\theoremstyle{plain}
\newtheorem{theo}{Theorem}[section]
\newtheorem{prop}[theo]{Proposition}
\newtheorem{cor}[theo]{Corollary}
\newtheorem{lemma}[theo]{Lemma}
\newtheorem{claim}{Claim}[section]

\theoremstyle{plain}

\theoremstyle{remark}

\title{The structured Gerstenhaber problem (II)}
\author{Cl\'ement de Seguins Pazzis\footnote{Universit\'e de Versailles Saint-Quentin-en-Yvelines, Laboratoire de Math\'ematiques
de Versailles, 45 avenue des Etats-Unis, 78035 Versailles cedex, France, dsp.prof@gmail.com}}

\begin{document}

\thispagestyle{plain}

\maketitle

\begin{abstract}
Let $b$ be a non-degenerate symmetric (respectively, alternating) bilinear form on a finite-dimensional vector space $V$, over a field with characteristic different from $2$. In a previous work \cite{dSPStructured1}, we have determined the maximal possible dimension for a linear subspace of $b$-alternating (respectively, $b$-symmetric) nilpotent endomorphisms of $V$. Here, provided that the cardinality of the underlying field be large enough with respect to the Witt index of $b$, we classify the spaces that have the maximal possible dimension.

Our proof is based on a new sufficient condition for the reducibility of a vector space of nilpotent linear operators. To illustrate
the power of that new technique, we use it to give a short new proof of the classical Gerstenhaber theorem on
large vector spaces of nilpotent matrices (provided, again, that the cardinality of the underlying field be large enough).
\end{abstract}

\vskip 2mm
\noindent
\emph{AMS Classification:} 15A30, 15A63, 15A03.

\vskip 2mm
\noindent
\emph{Keywords:} Symmetric matrices, Skew-symmetric matrices, Nilpotent matrices, Bilinear forms, Dimension, Gerstenhaber theorem.


\section{Introduction}

\subsection{Notation}

Throughout the article and with the exception of Sections \ref{generalsection} and \ref{standardGerstenhabersection}, $\F$ denotes a field with characteristic not $2$. Let $V$ be a finite-dimensional vector space over $\F$, equipped with a
non-degenerate bilinear form $b : V \times V \rightarrow \F$, and assume that $b$ is symmetric or
alternating (i.e. skew-symmetric). We say that $b$ is symplectic when it is non-degenerate and alternating.
Throughout, orthogonality in $V$ is considered with respect to $b$,
except in Section \ref{standardGerstenhabersection} where $b$ will not be mentioned and dual-orthogonality will be considered instead
(of course, no confusion will arise then). A linear subspace $X$ of $V$ is called \textbf{totally singular} for $b$ if $X \subset X^\bot$.
The maximal dimension for such a subspace is called the \textbf{Witt index} of $b$.
Finally, we say that $b$ is non-isotropic whenever its Witt index equals zero, or equivalently
$b(x,x)\neq 0$ for all $x \in V \setminus \{0\}$.

An endomorphism $u$ of $V$ is called $b$-symmetric (respectively, $b$-alternating)
when the bilinear form $(x,y) \in V^2 \mapsto b(x,u(y))$ is symmetric (respectively, alternating).
The set of all $b$-symmetric endomorphisms is denoted by $\calS_b$, while the set of all $b$-alternating
endomorphisms is denoted by $\calA_b$.

A subset of an $\F$-algebra is called \textbf{nilpotent} when all its elements are nilpotent,
and the nilindex of a nilpotent element $x$ of such an algebra is denoted by $\ind(x)$
(it is the least non-negative integer $k$ such that $x^k=0$).

Given non-negative integers $p$ and $n$, we denote by $\Mat_{n,p}(\F)$ the vector space of all $n$ by $p$ matrices
with entries in $\F$, by $\Mat_n(\F)$ the algebra of all square matrices of size $n$ and entries in $\F$
(and by $0_n$ and $I_n$ its zero matrix and its identity matrix, respectively), by $\Mats_n(\F)$ the linear subspace of all $n$ by $n$ symmetric matrices, and
by $\Mata_n(\F)$ the linear subspace of all $n$ by $n$ skew-symmetric matrices.

\subsection{The problem}

The traditional Gerstenhaber problem consists of the following questions:
\begin{itemize}
\item What is the maximal possible dimension for a nilpotent linear subspace of $\End(V)$?
\item What are the spaces with the maximal possible dimension?
\end{itemize}
Remember that a flag $\calF$ of $V$ is a finite increasing sequence
$(F_i)_{0 \leq i \leq p}$ of linear subspaces of $V$, and such a flag is said to be \textbf{stable} under an endomorphism $u$
if $u(F_i) \subset F_i$ for all $i \in \lcro 0,p\rcro$. The flag $(F_0,\dots,F_p)$ is said to be \textbf{complete}
if $F_p=V$ and $\dim F_i=i$ for all $i \in \lcro 0,p\rcro$.
If $\calF$ is complete one denotes by $\calN_\calF$
the set of all endomorphisms $u$ of $V$ such that $u(F_i) \subset F_{i-1}$ for all $i \in \lcro 1,p\rcro$
(which is equivalent to having $u$ nilpotent and stabilize $F_i$ for all $i \in \lcro 0,p\rcro$).
With that in mind, the Gerstenhaber problem has the following classical answer (with no restriction on the characteristic of the underlying field):

\begin{theo}[Gerstenhaber (1958), Serezhkin (1982)]
Let $\calV$ be a nilpotent linear subspace of $\End(V)$. Then, $\dim \calV \leq \dbinom{\dim V}{2}$,
and equality holds if and only if $\calV=\calN_\calF$ for a (unique) complete flag $\calF$ of $V$.
\end{theo}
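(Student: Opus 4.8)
The plan is to prove the classical Gerstenhaber theorem using the new reducibility criterion announced in the abstract, which (judging from the paper's structure) will be established in Section~\ref{generalsection}. The dimension bound itself is the easy part: for any nilpotent subspace $\calV\subseteq\End(V)$ and any nonzero $x\in V$, the evaluation map $u\mapsto u(x)$ sends $\calV$ into a proper subspace (no $u\in\calV$ can have $x$ as an eigenvector for eigenvalue~$1$, and in fact one shows $x\notin\{u(x):u\in\calV\}$ is too weak — one needs that $\{u(x):u\in\calV\}$ has dimension at most $\dim V-1$, which follows because otherwise some $u\in\calV$ would be invertible after adjusting, contradicting nilpotence via a standard argument). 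Then induction on $\dim V$ combined with the reducibility criterion gives an invariant subspace, and one peels it off.

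More precisely, here is the structure I would follow. First I would recall that a subspace $\calV$ of nilpotent operators is called reducible if there is a nontrivial proper subspace $W$ of $V$ with $u(W)\subseteq W$ for all $u\in\calV$. The new sufficient condition — which I will invoke as a black box from the earlier section — should say roughly that if $\calV$ is a nilpotent subspace of $\End(V)$ whose dimension is large enough (here, exactly $\binom{\dim V}{2}$), and $\#\F$ is large enough relative to $\dim V$, then $\calV$ is reducible, \emph{unless} $\dim V\le 1$. Granting this, the proof of the equality case runs by induction on $n:=\dim V$. For $n\le 1$ the statement is trivial. For $n\ge 2$, pick via the criterion an invariant subspace $W$ with $0<\dim W<n$; choosing $W$ of minimal dimension we may assume $W$ is the unique minimal invariant subspace, in particular $\dim W=1$ (otherwise $\calV|_W$ would itself be a nilpotent space on $W$, reducible by the criterion again if $\dim W\ge 2$, contradicting minimality — here one must check the dimension of the restricted/quotient spaces, see below).

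The heart of the induction is the dimension bookkeeping. With a $1$-dimensional invariant line $W=\F x_1$, consider the restriction map $\rho:\calV\to\End(V/W)$ and the "boundary" map $\delta:\calV\to\Hom(V/W,W)\cong (V/W)^*$ sending $u$ to the induced map $V/W\to W$. Then $\Ker\rho$ consists of operators killing $V/W$ and landing in $W$, hence $\Ker\rho\subseteq\Hom(V/W,W)$ has dimension at most $n-1$; meanwhile $\rho(\calV)$ is a nilpotent subspace of $\End(V/W)$, so $\dim\rho(\calV)\le\binom{n-1}{2}$. Adding up, $\binom n2=\dim\calV\le (n-1)+\binom{n-1}{2}=\binom n2$, forcing equality at both ends: $\rho(\calV)=\calN_{\calF'}$ for a complete flag $\calF'$ of $V/W$ (by induction), and $\Ker\rho$ is \emph{all} of $\Hom(V/W,W)$. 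Pulling $\calF'$ back to $V$ and prepending $W$ gives a complete flag $\calF$ of $V$ with $\calV\subseteq\calN_\calF$; comparing dimensions ($\dim\calN_\calF=\binom n2$) yields $\calV=\calN_\calF$. Uniqueness of $\calF$ follows because $\calN_\calF$ determines $\calF$: $F_i$ is recovered as $\{x\in V: \dim(\calV x)\le i-1\}$ suitably interpreted, or more cleanly, $\calN_\calF$ has a unique maximal invariant flag.

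The main obstacle I anticipate is not any single computation but rather verifying that the new reducibility criterion actually applies at each stage of the induction with the right numerical threshold — in particular, that "$\dim\calV=\binom n2$ and $\#\F$ large" is exactly strong enough to trigger reducibility, and that after passing to $V/W$ the hypothesis "$\#\F$ large relative to $\dim(V/W)$" is still met (it is, since $\dim(V/W)<\dim V$ and the bound on $\#\F$ is monotone). A secondary subtlety is the minimality argument ensuring $\dim W=1$: one needs that a nilpotent space on a $\ge 2$-dimensional space of \emph{maximal} dimension is reducible, but a priori $\calV|_W$ need not have maximal dimension on $W$; the clean fix is to not insist on $\dim W=1$ but instead run the two-sided dimension count above with general $0<d:=\dim W<n$, getting $\binom n2\le \dim\calV \le \binom d2 + d(n-d) + \binom{n-d}{2} = \binom n2$, again forcing equalities, and then induct on both $W$ and $V/W$ — this avoids the minimality headache entirely at the cost of a slightly longer bookkeeping step.
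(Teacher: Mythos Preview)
Your inductive wrap-up (once a vector $x$ with $\calV x=\{0\}$ is in hand, pass to $V/\F x$, apply induction, lift the flag) is correct and matches the paper exactly. The problem is everything before that: you have mis-guessed what the Reducibility Lemma actually says, and in doing so you have black-boxed away the entire content of the proof.

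The lemma (Lemma~\ref{reduclemma}) does \emph{not} assert ``maximal dimension $\Rightarrow$ reducible.'' Its hypothesis is a pointwise condition on a specific vector $x\in\calV^\bullet\setminus\{0\}$, namely
\[
K(\calV)\ \subset\ \F x\oplus \calV x,
\]
and its conclusion is the much sharper $\calV x=\{0\}$. The dimension hypothesis never appears in it. The real work in Section~\ref{standardGerstenhabersection} is to \emph{verify} this condition for a well-chosen $x$: one first shows (via the Trace Lemma and the Strong-Orthogonality-type identity in Claim~\ref{doubleorthogonalGerstenhaberClassical}) that $v^k(x)\in\F x\oplus\calV x$ for all $v\in\calV$ and $k<n$; then, using that the generic nilindex $p$ is $n$ or $n-1$ (so each $v$ of nilindex $p$ has $\im v^{p-1}$ one-dimensional), one deduces $\im v^{p-1}\subset\F x\oplus\calV x$ for every such $v$; a short polynomial argument then yields $K(\calV)\subset\F x\oplus\calV x$. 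None of this is automatic, and your proposal contains no trace of it.

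Two smaller points. First, your opening paragraph on the inequality is garbled: the bound does not come from ``$\calV x$ has dimension $\le n-1$'' alone. The paper (and the classical argument) uses the three-term decomposition $\dim\calV=\dim\calV x+\dim\calV^x+\dim(\calV\modu x)$ for an \emph{arbitrary} nonzero $x$, together with the trace-orthogonality $\calV^x\perp(\F x\oplus\calV x)$; here $\calV\modu x$ is the space induced on $V/\F x$ by those $u\in\calV$ with $u(x)=0$, \emph{not} by all of $\calV$ --- no invariant subspace is needed for the inequality. Second, your ``general $W$'' bookkeeping is fine arithmetically but presupposes you already have an invariant $W$, which is precisely what the argument is supposed to produce; so it does not avoid the gap.
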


Gerstenhaber \cite{Gerstenhaber} proved this result under the additional requirement that $\F$ be of cardinality at least $\dim V$;
Serezhkin \cite{Serezhkin} later lifted that assumption.

\vskip 3mm
In this article, we deal with the so-called \emph{structured} Gerstenhaber problem, which is the equivalent of the traditional one
for a space equipped with a non-degenerate symmetric or alternating bilinear form. In short,
we are interested in the nilpotent linear subspaces of $\calS_b$ and $\calA_b$.
Here, the maximal dimension is connected to the Witt index of $b$.

A flag $\calF=(F_0,\dots,F_p)$ of $V$ is called \textbf{partially complete}
whenever $\dim F_i=i$ for all $i \in \lcro 0,p\rcro$.
Such a flag is called \textbf{$b$-singular} whenever $F_p$ is totally singular for $b$. When it is partially complete and $b$-singular,
it is called \textbf{maximal} if $p$ equals the Witt index of $b$.
It was shown in \cite{dSPStructured1} that if $\calF=(F_i)_{0 \leq i \leq \nu}$ is a maximal partially complete $b$-singular flag of $V$,
then the set $\WS_{b,\calF}$ (respectively, the set $\WA_{b,\calF}$) of all \emph{nilpotent} $u \in \calS_b$ (respectively, $u \in \calA_b$)
that stabilize $\calF$ (or, equivalently, $u (F_i) \subset F_{i-1}$ for all $i \in \lcro 1,\nu\rcro$), is a linear subspace with dimension $\nu(n-\nu)$ (respectively, $\nu(n-\nu-1)$).
Moreover, it was proved in \cite{dSPStructured1} that this dimension is optimal for the structured Gerstenhaber problem:

\begin{theo}[See \cite{dSPStructured1} theorem 1.7]\label{majoTheo}
Let $V$ be an $n$-dimensional vector space equipped with a non-degenerate symmetric or alternating bilinear form $b$.
Denote by $\nu$ the Witt index of $b$.
\begin{enumerate}[(a)]
\item Let $\calV$ be a nilpotent linear subspace of $\calS_b$.
Then, $\dim \calV \leq \nu(n-\nu)$.

\item Let $\calV$ be a nilpotent linear subspace of $\calA_b$.
Then, $\dim \calV \leq \nu(n-\nu-1)$.
\end{enumerate}
\end{theo}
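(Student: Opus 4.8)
The plan is to prove parts (a) and (b) together by induction on $n=\dim V$, the single non-formal ingredient being a \emph{reducibility} statement for nilpotent operator spaces. For $n\leq 1$ one has $\nu=0$ and $\calV=\{0\}$, so I would assume $n\geq 2$ with the statement known in all lower dimensions; the case $\calV=\{0\}$ is trivial (both bounds are non-negative, which for (b) uses that $\nu\geq 1$ forces $2\nu\leq n$). Otherwise the decisive first step is to exhibit a linear subspace $W$ with $\{0\}\subsetneq W\subsetneq V$ that is stable under every element of $\calV$. Granting such a $W$, I would note that $W^\bot$ is stable under $\calV$ as well: for $u\in\calS_b$ and $x\in W^\bot$, $w\in W$, the identity $b(u(x),w)=\pm b(w,u(x))=\pm b(x,u(w))$, valid since $c_u\colon (x,y)\mapsto b(x,u(y))$ is symmetric, vanishes because $u(w)\in W$, so $u(x)\in W^\bot$; the case $u\in\calA_b$ is identical using that $c_u$ is alternating. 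Hence $W\cap W^\bot$ is again $\calV$-stable, and two cases arise: either $W\cap W^\bot\neq\{0\}$, so that $W\cap W^\bot$ is a totally singular $\calV$-stable subspace, or $W\cap W^\bot=\{0\}$, so that $b$ restricts to a non-degenerate form on $W$ and $V=W\perp W^\bot$.

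\textbf{The non-degenerate case.} Here $u\mapsto(u_{|W},u_{|W^\bot})$ embeds $\calV$ into $\calS_{b|W}\times\calS_{b|W^\bot}$ (respectively $\calA_{b|W}\times\calA_{b|W^\bot}$), with both projections nilpotent. Setting $d_1=\dim W$, $d_2=\dim W^\bot$, and letting $\nu_1,\nu_2$ be the Witt indices of the two restricted forms, additivity of the Witt index over orthogonal direct sums gives $\nu_1+\nu_2=\nu$, and the induction hypothesis yields $\dim\calV\leq\nu_1(d_1-\nu_1)+\nu_2(d_2-\nu_2)$ in case (a). It then remains to check the elementary inequality $\nu_1(d_1-\nu_1)+\nu_2(d_2-\nu_2)\leq\nu(n-\nu)$, the difference being $\nu_1(d_2-\nu_2)+\nu_2(d_1-\nu_1)\geq 0$; case (b) is the same with every $d_i-\nu_i$ replaced by $d_i-\nu_i-1$, and the same computation gives $\leq\nu(n-\nu-1)$.

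\textbf{The totally singular case.} Let $W$ be totally singular and $\calV$-stable, $s:=\dim W\geq 1$ (so $s\leq\nu$); then $W\subseteq W^\bot$, both are $\calV$-stable, and I would choose a decomposition $V=W\oplus Y\oplus W'$ in which $Y$ is a non-degenerate complement of $W$ in $W^\bot$ and $W'$ is a totally singular complement of $W^\bot$ in $V$, in duality with $W$ and orthogonal to $Y$. Every $u\in\calV$ then has the block shape
\[
u=\begin{pmatrix} A & B & C\\ 0 & D & E\\ 0 & 0 & F\end{pmatrix},
\]
where $A,D,F$ — the endomorphisms induced by $u$ on $W$, on $W^\bot/W\cong Y$, and on $V/W^\bot\cong W'$ — are nilpotent. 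Expressing the membership $u\in\calS_b$ (respectively $u\in\calA_b$) block by block, I would obtain: $F$ is, up to sign, the adjoint of $A$ with respect to the duality between $W$ and $W'$, hence is determined by $A$; $E$ is determined by $B$; $D$ is $\bar b$-symmetric (respectively $\bar b$-alternating) for the induced non-degenerate form $\bar b$ on $Y$; and $C$ corresponds, via the perfect pairing $b\colon W'\times W\to\F$, to a symmetric (respectively alternating) bilinear form on $W'$. Thus $u\mapsto(A,B,C,D)$ embeds $\calV$ into $\End(W)\times\Hom(Y,W)\times\Mats_s(\F)\times\calS_{\bar b}(Y)$ (respectively into $\End(W)\times\Hom(Y,W)\times\Mata_s(\F)\times\calA_{\bar b}(Y)$) — injectively, since $E$ and $F$ are recovered from $B$ and $A$ — with the first and last projections nilpotent. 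The classical Gerstenhaber theorem bounds the first projection by $\binom{s}{2}$, and since $\bar b$ is a non-degenerate form of Witt index $\nu-s$ on the $(n-2s)$-dimensional space $Y$, the induction hypothesis bounds the last projection by $(\nu-s)(n-s-\nu)$ (respectively $(\nu-s)(n-s-\nu-1)$). Adding up, $\dim\calV\leq\binom{s}{2}+s(n-2s)+\binom{s+1}{2}+(\nu-s)(n-s-\nu)$, and a one-line simplification collapses this to exactly $\nu(n-\nu)$; in case (b), $\dim\calV\leq\binom{s}{2}+s(n-2s)+\binom{s}{2}+(\nu-s)(n-s-\nu-1)$ collapses to exactly $\nu(n-\nu-1)$. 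This closes the induction.

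The one real obstacle is the reducibility step invoked at the outset: I need every nonzero nilpotent linear subspace of $\End(V)$ with $\dim V\geq 2$ to have a nontrivial common invariant subspace. When $|\F|$ is large enough this is classical — nilpotent operator spaces are triangularizable (Gerstenhaber), and the first member of a stabilized complete flag does the job — so under that cardinality hypothesis the argument above is complete; over an arbitrary field this is precisely the delicate point handled by other means in \cite{dSPStructured1}. I would also record, as the geometric content of the case $\nu=0$, that an anisotropic form admits no nonzero nilpotent $b$-symmetric or $b$-alternating endomorphism $u$, for then $\im u=(\ker u)^\bot$ and $\ker u\cap\im u$ would contain a nonzero singular vector — although this fact is in any event subsumed by the two dimension formulas.
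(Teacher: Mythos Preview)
Your inductive framework and the block-counting in both the non-degenerate and totally singular cases are correct, but the argument rests on a reducibility claim that is false. It is \emph{not} true that every nonzero nilpotent linear subspace of $\End(V)$ admits a nontrivial common invariant subspace, and this has nothing to do with the cardinality of $\F$. Gerstenhaber's theorem asserts a dimension bound and characterizes the extremal spaces; it does not assert triangularizability, nor even reducibility, of an arbitrary nilpotent linear subspace. The paper's own introduction supplies the standard counterexample: the $2$-dimensional space of all
\[
N(x,y)=\begin{bmatrix} 0 & 0 & x \\ 0 & 0 & y \\ -y & x & 0 \end{bmatrix},\qquad (x,y)\in\F^2,
\]
consists of cube-zero matrices, yet $\Ker N(1,0)=\F e_1$ and $\Ker N(0,1)=\F e_2$ have trivial intersection, so there is no common kernel vector; the dual computation rules out a $2$-dimensional common invariant subspace as well. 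This holds over every field, so your ``decisive first step'' cannot be taken and the induction never starts. (Restricting the claim to $\calV\subset\calS_b$ or $\calV\subset\calA_b$ would not help: you give no argument for that case either, and establishing it would be at least as hard as the theorem itself.)

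The paper's proof, recalled in Section~\ref{proofreview}, avoids this obstacle entirely. Instead of seeking a $\calV$-invariant subspace, it picks an \emph{arbitrary} nonzero isotropic vector $x$ and writes
\[
\dim\calV=\dim(\calV x)+\dim L_{\calV,x}+\dim(\calV\modu x)
\]
(with a shift of $-1$ in case~(b)), where $L_{\calV,x}$ parametrizes the $b$-tensors $x\otimes_b y$ (resp.\ $x\wedge_b y$) lying in $\calV$. The Orthogonality Lemma for Tensors, a direct consequence of the Trace Lemma, forces $\F x\oplus\calV x$ and $L_{\calV,x}$ to be $b$-orthogonal, whence $\dim(\calV x)+\dim L_{\calV,x}\leq n-1$; induction on $\{x\}^\bot/\F x$ then finishes. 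No common invariant subspace is produced anywhere in the proof of the dimension bound --- indeed, the difficulty of producing one is exactly why the \emph{equality} cases (Theorems~\ref{alttheo} and~\ref{symtheo}) require the elaborate machinery of the later sections.
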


This theorem generalizes earlier results of Meshulam and Radwan \cite{MeshulamRadwan} who tackled the case of an algebraically closed field
with characteristic $0$ with a symmetric form $b$. In two specific cases (point (a) when $b$ is alternating, point (b) when $b$
is symmetric), the result can also be seen as a special case of a general result of Draisma, Kraft and Kuttler \cite{DraismaKraftKuttler} when the underlying field is algebraically closed and with characteristic different from $2$.

The present article deals with the problem of characterizing the spaces with maximal dimension.
It turns out that, in contrast with Theorem \ref{majoTheo} -- in the proof of which there is no need to discriminate whether
$b$ is symmetric or alternating -- the search for spaces having the maximal dimension must be split into four subproblems:
spaces of symmetric endomorphisms for a symmetric form, spaces of alternating endomorphisms for an alternating form,
spaces of alternating endomorphisms for a symmetric form, and spaces of symmetric endomorphisms for an alternating form.
A complete solution to the former two subproblems appears in \cite{dSPStructured1}:

\begin{theo}[See \cite{dSPStructured1} theorem 1.8]\label{theosymsym}
Let $V$ be an $n$-dimensional vector space equipped with a non-degenerate symmetric bilinear form $b$, and let
$\calV$ be a nilpotent linear subspace of $\calS_b$ with dimension $\nu(n-\nu)$.
Then, $\calV=\WS_{b,\calF}$ for some maximal partially complete $b$-singular flag $\calF$ of $V$.
\end{theo}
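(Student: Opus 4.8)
The plan is to argue by induction on the Witt index $\nu$ of $b$. If $\nu=0$, then $b$ is non-isotropic and $\calS_b$ contains no nonzero nilpotent endomorphism: for a nilpotent $u\in\calS_b$ one has $\Ker u=(\im u)^\bot$, so $\im u\cap(\im u)^\bot=\im u\cap\Ker u$ is totally singular and is nonzero when $u\neq0$, contradicting $\nu=0$. Hence $\calV=\{0\}=\WS_{b,\calF}$ for the flag $\calF=(\{0\})$, and $\nu(n-\nu)=0$. Assume now $\nu\geq1$. The key step is to produce a totally singular line $D$ of $V$ such that $u(D)=\{0\}$ for every $u\in\calV$ (for a line, being stabilized by a nilpotent operator is the same as being killed by it). Granting this, every $u\in\calV$ also stabilizes $D^\bot$, since $b(u(D^\bot),D)=b(D^\bot,u(D))=\{0\}$; as $D\subset D^\bot$ it therefore induces an endomorphism $\bar u$ of the $(n-2)$-dimensional space $\bar V:=D^\bot/D$, which carries the induced non-degenerate symmetric form $\bar b$ of Witt index $\nu-1$. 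Since $\bar u$ is $\bar b$-symmetric and nilpotent, $\bar\calV:=\{\bar u:u\in\calV\}$ is a nilpotent linear subspace of $\calS_{\bar b}$, and the plan is to recover $\calV$ from $\bar\calV$ by induction.

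To run the induction I first check that $\bar\calV$ has maximal dimension. The kernel of the linear map $\calV\to\calS_{\bar b}$, $u\mapsto\bar u$, is contained in the set of nilpotent $u\in\calS_b$ with $u(D^\bot)\subset D$; fixing $v_0\in V\setminus D^\bot$ and using self-adjointness, such a $u$ is entirely determined by $u(v_0)$, which must lie in $D^\bot$ for $u$ to be nilpotent, so this set is a linear subspace of dimension $n-1$. Hence $\dim\bar\calV\geq\nu(n-\nu)-(n-1)=(\nu-1)\bigl((n-2)-(\nu-1)\bigr)$, while Theorem~\ref{majoTheo}(a) applied to $\bar V$ gives the opposite inequality; thus $\dim\bar\calV$ is maximal. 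By the induction hypothesis, $\bar\calV=\WS_{\bar b,\bar\calF}$ for a maximal partially complete $\bar b$-singular flag $\bar\calF=(\bar F_0,\dots,\bar F_{\nu-1})$ of $\bar V$. Pulling $\bar\calF$ back through $D^\bot\twoheadrightarrow\bar V$ and prepending $D$ yields a partially complete $b$-singular flag $\calF=(\{0\},D,F_2,\dots,F_\nu)$ of $V$ with $F_\nu$ totally singular of dimension $\nu$, hence maximal. A direct verification (from $u(D)=\{0\}$ and $\bar u\in\WS_{\bar b,\bar\calF}$) gives $\calV\subset\WS_{b,\calF}$, and since both spaces have dimension $\nu(n-\nu)$ they coincide.

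It remains to find the line $D$, which I expect to be the main obstacle. Two ingredients are needed. The first, easy one is that $\calV$ admits no subspace $W$ with $\{0\}\subsetneq W\subsetneq V$ that is non-degenerate for $b$ and stabilized by all of $\calV$: such a $W$ would give an orthogonal decomposition $V=W\oplus W^\bot$ into two $\calV$-stable non-degenerate subspaces, and applying Theorem~\ref{majoTheo}(a) to the restrictions $\calV|_W$ and $\calV|_{W^\bot}$, together with the additivity of the Witt index over orthogonal sums and the bound $\dim\geq2\times(\text{Witt index})$, would force $\dim\calV<\nu(n-\nu)$. The second, genuinely hard ingredient is a reducibility statement: that $K:=\bigcap_{u\in\calV}\Ker u\neq\{0\}$. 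This is the point where the maximality of $\dim\calV$ must be exploited in full; in the framework of the present paper it would follow from the new general sufficient condition for the reducibility of a space of nilpotent operators (which is why the cardinality of the field must be large with respect to $\nu$), whereas a more ad hoc argument is needed to avoid that restriction. Once $K\neq\{0\}$ is known, $K$ is a proper (since $\calV\neq\{0\}$) nonzero $\calV$-stable subspace, so by the first ingredient $b|_K$ is degenerate; its radical $K\cap K^\bot$ is then a nonzero totally singular $\calV$-stable subspace, and since $\calV$ kills $K$, any line $D$ inside $K\cap K^\bot$ has the required property.
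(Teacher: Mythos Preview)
Your inductive framework is sound, and the dimension bookkeeping in the reduction to $\bar V=D^\bot/D$ is correct (it matches the Preservation of Maximality and Lifting Lemmas in Section~\ref{proofreview}). The ``first ingredient'' is also a valid observation. But the proof has a genuine gap: the ``second, genuinely hard ingredient'' --- the existence of a nonzero common kernel vector --- is precisely the heart of the matter, and you do not prove it. You acknowledge this yourself; what you have written is a reduction of the theorem to that reducibility statement, not a proof of the theorem.

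Moreover, your suggested route to close the gap does not work for the theorem as stated. Theorem~\ref{theosymsym} carries \emph{no} assumption on $|\F|$, whereas the Reducibility Lemma of this paper requires $|\F|\geq\ind(\calV)$ and, more importantly, requires one first to establish condition~(C) for some $x\in\calV^\bullet$. Verifying condition~(C) is itself nontrivial (see Sections~\ref{symproofsection} and~\ref{altproofsection} for how much work this takes in the $b$-alternating cases); it is not a free consequence of maximality. So even granting the cardinality hypothesis you allude to, your outline would still be incomplete. The actual proof in \cite{dSPStructured1} (summarized just after the statement of Theorem~\ref{theosymsym}) proceeds along an entirely different line that sidesteps the reducibility problem altogether: one shows via a trace-orthogonality argument that $\calV$ is stable under $u\mapsto u^2$, hence under the Jordan product $(u,v)\mapsto uv+vu$, and then the Jacobson triangularization theorem gives a full $\calV$-stable flag directly, with no cardinality restriction. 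That stability-under-squares step is exactly what fails in the cases treated in the present paper (alternating endomorphisms for a symmetric form, and vice versa), which is why the Reducibility Lemma was devised for \emph{those} cases --- not for Theorem~\ref{theosymsym}.
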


\begin{theo}[See \cite{dSPStructured1} theorem 1.9]
Let $V$ be an $n$-dimensional vector space equipped with a symplectic form $b$,
and let $\calV$ be a nilpotent linear subspace of $\calA_b$ with dimension $\nu(n-\nu-1)$. Then, $\calV=\WA_{b,\calF}$ for some maximal partially complete $b$-singular flag $\calF$ of $V$.
\end{theo}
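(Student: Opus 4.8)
\emph{Overall strategy.} A symplectic form exists only in even dimension and then has Witt index $\nu=n/2$, where $n:=\dim V$; so the target dimension $\nu(n-\nu-1)$ is simply $\nu(\nu-1)$. I would argue by induction on $\nu$. The cases $\nu\le 1$ are trivial, since then $\calV=\{0\}$ while $\WA_{b,\calF}=\{0\}$ for any maximal partially complete $b$-singular flag $\calF$. For $\nu\ge 2$ the whole problem is to produce a common invariant line $D$ for $\calV$; granting this, the rest is mechanical.

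\emph{The descent.} Given such a $D$: as $b$ is alternating, $D$ is totally singular, so $D\subseteq D^\bot$; and since each $u\in\calV$ lies in $\calA_b$, hence satisfies $b(x,u(y))=b(u(x),y)$ (i.e.\ $u$ is $b$-self-adjoint), the hyperplane $D^\bot$ is $\calV$-invariant too. So $b$ induces a non-degenerate symplectic form $\bar b$ on $\overline V:=D^\bot/D$, of dimension $n-2$ and Witt index $\nu-1$, and $\calV$ induces a nilpotent subspace $\overline\calV$ of $\calA_{\bar b}(\overline V)$, with $\dim\overline\calV=\dim\calV-\dim\calW$ for $\calW:=\{u\in\calV:\ u(D^\bot)\subseteq D\}$. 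Working in a symplectic basis whose first vector spans $D$, one checks that \emph{any} nilpotent $b$-alternating $u$ with $u(D^\bot)\subseteq D$ kills $D$, maps $D^\bot$ into $D$, and is then determined by the induced map $D^\bot/D\to D$ together with $b$-self-adjointness (and in fact squares to $0$); hence $\dim\calW\le n-2$ and $\dim\overline\calV\ge\nu(\nu-1)-(n-2)=(\nu-1)(\nu-2)$. By Theorem~\ref{majoTheo}(b) applied to $\overline V$ this is also an upper bound, so $\dim\overline\calV=(\nu-1)(\nu-2)$ and the induction hypothesis gives $\overline\calV=\WA_{\bar b,\overline\calF}$ for a maximal partially complete $\bar b$-singular flag $\overline\calF=(\overline F_0,\dots,\overline F_{\nu-1})$ of $\overline V$.

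\emph{The reassembly.} Put $F_0:=\{0\}$, $F_1:=D$, and for $1\le i\le\nu-1$ let $F_{i+1}$ be the preimage in $D^\bot$ of $\overline F_i$. Then $\calF:=(F_0,\dots,F_\nu)$ is partially complete and its top space $F_\nu$ (the preimage of the Lagrangian $\overline F_{\nu-1}$) is totally singular of dimension $\nu$, so $\calF$ is a maximal partially complete $b$-singular flag. For $u\in\calV$ the relations $\bar u(\overline F_i)\subseteq\overline F_{i-1}$ lift to $u(F_{i+1})\subseteq F_i$ for $1\le i\le\nu-1$, while $u(F_1)=u(D)=\{0\}$ as $u|_D$ is nilpotent; hence $u(F_i)\subseteq F_{i-1}$ for all $i$, i.e.\ $\calV\subseteq\WA_{b,\calF}$. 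Since $\dim\WA_{b,\calF}=\nu(n-\nu-1)=\dim\calV$, equality holds and the inductive step is complete.

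\emph{Where the difficulty lies.} The crux is therefore the existence of a common invariant line for $\calV$ --- equivalently, by $b$-self-adjointness and $W\mapsto W^\bot$, of a common invariant hyperplane --- and this is precisely where the hypothesis that $|\F|$ be large relative to $\nu$ is unavoidable; note that $\dim\calV=\nu(\nu-1)$ is far below $\binom{n}{2}$, so no generic ``large nilpotent space'' criterion for $\End(V)$ applies and the form must be used. I would argue by contradiction, assuming $\calV$ has no invariant subspace besides $\{0\}$ and $V$ and aiming at $\dim\calV<\nu(\nu-1)$. One reduction is cheap: if $\calV$ had an invariant $W$ with $0<\dim W<n$ and $W\cap W^\bot=\{0\}$, then $V=W\perp W^\bot$ with both summands $\calV$-invariant, and Theorem~\ref{majoTheo}(b) on each summand would give $\dim\calV\le\nu_1(\nu_1-1)+\nu_2(\nu_2-1)<\nu(\nu-1)$ with $\nu_1,\nu_2\ge 1$ and $\nu_1+\nu_2=\nu$; so any invariant subspace may be taken totally singular. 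The remaining and hardest point is to rule out genuine irreducibility and to pass from a totally singular invariant subspace down to a line: here I expect to need a counting/genericity argument over the ground field (legitimate because $|\F|$ is large), choosing a well-placed nonzero vector $v$ (possibly after passing to a quotient), controlling $\calV v\subseteq v^\bot$ and the stabilizer $\{u\in\calV:\ u(v)\in\F v\}$, and confronting these with Theorem~\ref{majoTheo} until irreducibility contradicts $\dim\calV=\nu(\nu-1)$. This reducibility input is the whole difficulty; the descent and reassembly above are routine.
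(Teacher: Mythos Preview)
Your inductive skeleton is sound, and the descent/reassembly paragraphs are correct; they amount to the mechanism of Lemmas~\ref{preservemaxLemma} and~\ref{liftinglemma}. The problem lies entirely in the paragraph you label ``where the difficulty lies'', and in two linked ways.

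First, a factual error: this theorem carries \emph{no} cardinality hypothesis on $\F$ beyond $\car\F\neq 2$. It is one of the two cases already settled in \cite{dSPStructured1} for arbitrary such fields; the results that need $|\F|$ large are Theorems~\ref{alttheo} and~\ref{symtheo}, the new contributions of the present paper. So your claim that ``the hypothesis that $|\F|$ be large relative to $\nu$ is unavoidable'' is simply false here, and any plan built on a genericity/counting argument over a large field is aimed at the wrong theorem.

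Second, you wrote down the decisive observation and then failed to exploit it: for symplectic $b$, the elements of $\calA_b$ are precisely the $b$-self-adjoint operators, so $u\in\calA_b\Rightarrow u^2\in\calA_b$. The proof in \cite{dSPStructured1}, as summarized in the introduction above, runs as follows: a trace-orthogonality argument (this is where maximality of $\dim\calV$ enters) shows that $\calV$ itself is stable under $u\mapsto u^2$, hence under the Jordan product $(u,v)\mapsto uv+vu$; then Jacobson's triangularization theorem, valid over any field, yields that $\calV$ is triangularizable. This produces at once a $\calV$-invariant complete flag --- far more than a single line --- from which the required $b$-singular flag is extracted with little further work. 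No genericity, no field-size restriction. Your sketched attack via irreducibility reductions and ``controlling $\calV v$'' is not the intended route and, without the squares/Jacobson input, I do not see how to complete it.
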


Theorem \ref{theosymsym} generalizes a recent result of Bukov\v{s}ek and Omladi\v{c} \cite{BukovsekOmladic}, who limited their discussion to complex numbers (their proof can easily be generalized to any quadratically closed field with characteristic not $2$).
In the proof of the above two theorems, the strategy was the same one as in \cite{BukovsekOmladic}:
by a trace orthogonality argument, one proved that $\calV$ is stable under squares; one deduced that $\calV$ is stable under the Jordan product $(u,v) \mapsto uv+vu$; one used the Jacobson triangularization theorem \cite{Jacobson,Radjavi} to deduce that $\calV$ is triangularizable;
the conclusion followed with limited effort.

However, in the two remaining cases, i.e.\ alternating endomorphisms for a symmetric form, and symmetric endomorphisms for
an alternating form, the above strategy fails. Indeed, in the first case (respectively, the second one) the square of an element of $\calA_b$
(respectively, of $\calS_b$) is an element of $\calS_b$ (respectively, of $\calA_b$), and can belong to $\calA_b$ (respectively, to $\calS_b$)
only if it is zero. It is clear though that in general $\WA_{b,\calF}$ (respectively, $\WS_{b,\calF}$) does not consist solely of square-zero elements. It was found however that any nilpotent subspace of $\calA_b$
(respectively, of $\calS_b$) with the maximal possible dimension is stable under cubes
provided that $|\F|>3$ (see lemmas 5.2 and 5.3 of \cite{dSPStructured1}, reproduced as
Lemma \ref{cubeslemma} in Section \ref{stabcubereview} of the present manuscript).
However, a nilpotent subspace that is stable under cubes is not necessarily triangularizable.
A classical example, in matrix terms, is the space of all $3$ by $3$ matrices of the form
$$N(x,y):=\begin{bmatrix}
0 & 0 & x \\
0 & 0 & y \\
-y & x & 0
\end{bmatrix} \quad \text{with $(x,y)\in \F^2$.}$$
One checks that every such matrix has cube zero (and hence this vector space is stable under cubes, and even under any odd power). Yet, it is not triangularizable since it is easily checked that
there is no non-zero vector that lies in the kernel of every matrix of the above type.

In the above example it can be shown that there is no
non-degenerate symmetric or alternating bilinear form $b$ on $\F^3$ for which all the matrices under hand represent $b$-symmetric (or $b$-alternating) endomorphisms in the standard basis. However, the example can be used to construct an example
of this kind. Simply, let $\epsilon \in \{1,-1\}$, and consider the bilinear form $b$ whose
matrix in the standard basis of $\F^6$ is
$$\begin{bmatrix}
0_3 & I_3 \\
\epsilon\,I_3 & 0_3
\end{bmatrix},$$
and the space $\calV$ of all endomorphisms of $\F^6$ with matrix in the standard basis equal to
$$\begin{bmatrix}
N(x,y) & M \\
0_3 & \epsilon'\, N(x,y)^T
\end{bmatrix}$$
for $\epsilon':=\epsilon$ (respectively, $\epsilon':=-\epsilon$), some pair $(x,y)\in \F^2$ and some symmetric (respectively, alternating)
matrix $M \in \Mat_3(\F)$. Then, one checks that $\calV$ is a nilpotent subspace of $\calS_b$
(respectively, of $\calA_b$), and that it is stable under cubes (and, more generally, under any odd power).
Worse still, the Witt index of $b$ is $\nu:=3$, and $\calV$ has dimension exactly $\nu(n-\nu)-1$ (respectively,
$\nu(n-\nu-1)-1$) where $n:=6$, which is just one unit under the critical dimension of Theorem \ref{majoTheo}.
Again, it is not hard to check that $\calV$ is not triangularizable.

Hence, the stability under cubes, although a nice property, is clearly insufficient to
obtain the triangularizability of the spaces with maximal dimension.

\vskip 3mm
The aim of the present article is to give a partial solution in the yet unresolved cases:
assuming that the cardinality of the underlying field is large enough with respect to the Witt index of $b$,
we shall prove that the above characterization still holds.
Here are our results:

\begin{theo}\label{alttheo}
Let $V$ be an $n$-dimensional vector space over $\F$ equipped with a non-degenerate symmetric bilinear form $b$
with Witt index $\nu$, and let
$\calV$ be a nilpotent linear subspace of $\calA_b$.
Assume that $|\F| \geq \min(n,2\nu+1)$ and $\dim \calV=\nu(n-\nu-1)$. Then $\calV=\WA_{b,\calF}$
for some maximal partially complete $b$-singular flag $\calF$ of $V$.
\end{theo}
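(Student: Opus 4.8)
The plan is to argue by induction on $n=\dim V$. The case $\nu=0$ is trivial (Theorem \ref{majoTheo}(b) gives $\calV=\{0\}$, and the flag $(\{0\})$ works), as is the case $n=2$, so we may assume $\nu\geq 1$ and $n\geq 3$, whence $\dim\calV=\nu(n-\nu-1)\geq 1$ and $n\geq\nu+2$. The heart of the matter is to prove that $\calV$ is \emph{reducible}, i.e.\ that it admits a nonzero proper invariant subspace; once that is in hand, everything else is bookkeeping, carried out in the next two paragraphs.

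Suppose $\calV$ is reducible and pick a nonzero proper invariant subspace $W$. Since every $u\in\calV$ satisfies $b(u(x),y)=-b(x,u(y))$ for all $x,y\in V$, the subspace $W^\bot$ is invariant as well, hence so are $W\cap W^\bot$ and $W+W^\bot=(W\cap W^\bot)^\bot$. If $W\cap W^\bot\neq\{0\}$ then $Z:=W\cap W^\bot$ is a nonzero invariant subspace that is totally singular. Otherwise $b|_W$ is non-degenerate, so $V=W\oplus W^\bot$ is an orthogonal decomposition into invariant, non-degenerate summands; restricting $\calV$ to each summand, using the additivity of the Witt index over such a splitting, and invoking Theorem \ref{majoTheo}(b) twice, we bound $\dim\calV$ by $\mu(w-\mu-1)+(\nu-\mu)(n-w-\nu+\mu-1)$, where $w:=\dim W$ and $\mu$ is the Witt index of $b|_W$. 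A direct computation rewrites this as $\nu(n-\nu-1)-\mu\bigl(n-w-2(\nu-\mu)\bigr)-(\nu-\mu)\,w$, whose two subtracted terms are non-negative (the first because $n-w=\dim W^\bot\geq 2(\nu-\mu)$) and cannot vanish simultaneously when $0<w<n$ and $\nu\geq 1$. Thus the equality $\dim\calV=\nu(n-\nu-1)$ excludes this case, and we conclude that $\calV$ has a nonzero invariant totally singular subspace $Z$; set $d:=\dim Z\geq 1$.

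Now I run the standard reduction relative to $Z$. Since $Z\subseteq Z^\bot$ and $\calV$ stabilizes both $Z$ and $Z^\bot$, it induces a nilpotent subspace $\calV_Z$ of $\End(Z)$ and a nilpotent subspace $\overline{\calV}$ of $\calA_{\overline b}$, where $\overline b$ is the non-degenerate symmetric form induced on $Z^\bot/Z$, of Witt index $\nu-d$. Consider the linear map $\phi\colon\calV\to\End(Z)\times\End(Z^\bot/Z)$, $u\mapsto(u|_Z,\overline u)$. If $u\in\Ker\phi$ then $u$ kills $Z$ and maps $Z^\bot$ into $Z$, and the $b$-alternating condition then forces $u(V)\subseteq Z^\bot$ as well; writing things out in a decomposition $V=Z\oplus N\oplus Z'$ with $Z'$ totally singular and $b$-dual to $Z$ and $N:=(Z\oplus Z')^\bot$, one sees that $u$ is freely parametrized by a homomorphism $Z'\to N$ together with a $d$ by $d$ skew-symmetric matrix, so $\dim\Ker\phi\leq(n-2d)d+\binom{d}{2}$. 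On the other hand, the Gerstenhaber theorem and Theorem \ref{majoTheo}(b) give $\dim\im\phi\leq\dim\calV_Z+\dim\overline{\calV}\leq\binom{d}{2}+(\nu-d)(n-d-\nu-1)$. Adding the two bounds yields exactly $\dim\calV\leq\nu(n-\nu-1)$, so equality forces $\dim\calV_Z=\binom{d}{2}$ and $\dim\overline{\calV}=(\nu-d)(n-d-\nu-1)$. By the Gerstenhaber theorem, $\calV_Z=\calN_\calG$ for a complete flag $\calG$ of $Z$; by the induction hypothesis, applicable since $\min(n-2d,2(\nu-d)+1)\leq\min(n,2\nu+1)\leq|\F|$, we get $\overline{\calV}=\WA_{\overline b,\calF'}$ for some maximal partially complete $\overline b$-singular flag $\calF'$ of $Z^\bot/Z$. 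Concatenating $\calG$ with the preimages in $Z^\bot$ of the terms of $\calF'$ produces a maximal partially complete $b$-singular flag $\calF$ of $V$ for which one checks directly that $\calV\subseteq\WA_{b,\calF}$; since $\dim\WA_{b,\calF}=\nu(n-\nu-1)=\dim\calV$, we conclude $\calV=\WA_{b,\calF}$.

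It remains to prove that $\calV$ is reducible, which is where the new ingredient of the paper must enter and where I expect the genuine difficulty. Stability under cubes comes for free: by Lemma \ref{cubeslemma} every odd power of an element of $\calV$ lies again in $\calV$, and hence --- since $\car\F\neq 2$ and $|\F|$ is not too small --- $\calV$ is closed under the symmetrized triple product $(u,v,w)\mapsto uvw+uwv+vuw+vwu+wuv+wvu$, so that it carries the structure of a Jordan-triple-type system of nilpotent operators. As the $3$ by $3$ example $N(x,y)$ and its $6$ by $6$ avatars from the introduction make clear, this property alone comes nowhere near forcing reducibility; the extra leverage must come from the \emph{exact} value $\dim\calV=\nu(n-\nu-1)$ together with the hypothesis $|\F|\geq\min(n,2\nu+1)$. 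The plan is to isolate a general sufficient condition for a nilpotent subspace of $\End(V)$ to be reducible --- robust enough to also yield a short proof of the Gerstenhaber theorem, by applying it to a nilpotent subspace of dimension $\binom{\dim V}{2}$ --- and then to check that $\calV$, on account of its maximal dimension and its cubic closure, satisfies that condition. Formulating the criterion precisely, and in particular determining how large $|\F|$ must be for the genericity arguments underpinning it to run, is the principal obstacle.
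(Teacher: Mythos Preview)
Your reduction framework in the second and third paragraphs is correct and in some ways cleaner than the paper's: the paper always reduces along a single isotropic vector (the Lifting Lemma), whereas you handle an invariant totally singular subspace $Z$ of arbitrary dimension $d$ in one shot. The dimension counts check out, and the exclusion of the non-degenerate invariant summand via the identity
\[
\mu(w-\mu-1)+(\nu-\mu)(n-w-\nu+\mu-1)=\nu(n-\nu-1)-\mu\bigl(n-w-2(\nu-\mu)\bigr)-(\nu-\mu)w
\]
is a nice touch.

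The genuine gap is exactly where you say it is: you have not proved that $\calV$ is reducible, and your final paragraph is a statement of intent rather than an argument. You correctly anticipate that one needs a general reducibility criterion (this is the paper's Lemma~\ref{reduclemma}) and that cubic closure alone is insufficient. What you are missing is everything in Section~\ref{altproofsection}: the paper first passes to an infinite field (Proposition~\ref{reductiontoinfinitealt}), then carries out a delicate case analysis on the generic nilindex $p$ of $\calV$. The point is not merely to show that \emph{some} nonzero proper subspace is invariant, but to exhibit a nonzero isotropic $x\in\calV^\bullet$ with $\calV x=\{0\}$. This requires (i) pinning down the Jordan structure of a generic element (Claims~\ref{Jordanstruct1} and~\ref{Jordanstruct2}), (ii) a refinement of Proposition~\ref{tangentprop} showing $\calV x\subset u(\{x\}^\bot)$ (Claim~\ref{orthotangentclaim}), (iii) using the Strong Orthogonality Lemma together with Corollary~\ref{cubescor} to force $\F x\oplus\calV x$ to be totally singular and $K(\calV)$ to be totally singular (Claims~\ref{wedgeclaim}--\ref{claimexistnonisotropic}), and finally (iv) the Linear Density Lemma to place $K(\calV)$ inside some $\F y\oplus\calV y$, at which point the Reducibility Lemma fires. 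The non-pure case ($p=3$ with two Jordan cells of size $3$) needs a separate ad~hoc argument. None of this machinery is hinted at in your proposal beyond the phrase ``genericity arguments,'' and the argument does not go through without it.
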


\begin{theo}\label{symtheo}
Let $V$ be an $n$-dimensional vector space over $\F$ equipped with a symplectic form $b$, and let
$\calV$ be a nilpotent linear subspace of $\calS_b$. Set $\nu:=\frac{n}{2}$ and
assume that $|\F| \geq n$ and $\dim \calV=\nu(n-\nu)$. Then $\calV=\WS_{b,\calF}$
for some maximal partially complete $b$-singular flag $\calF$ of $V$.
\end{theo}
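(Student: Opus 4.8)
The plan is to induct on the Witt index $\nu$, the decisive ingredient being the reducibility criterion for nilpotent operator spaces developed in Section~\ref{generalsection}, used in tandem with the stability under cubes recorded in Lemma~\ref{cubeslemma}. The case $\nu=0$ is vacuous, and when $\nu=1$ we have $n=2$ and $\calV=\F u$ for a single nonzero nilpotent $u\in\calS_b$; then $u^2=0$ yields $\im u=\Ker u=\F x$ for some $x\neq 0$, the line $\F x$ is automatically totally singular (as $b$ is alternating), and one checks at once that $\calV=\WS_{b,\calF}$ with $\calF=(0,\F x)$. So assume $\nu\geq 2$; then $n\geq 4$, hence $|\F|\geq n>3$, so $\calV$ is stable under cubes by Lemma~\ref{cubeslemma}. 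The crux of the argument is to verify that $\calV$ -- being nilpotent, stable under cubes, of the critical dimension $\nu(n-\nu)=\nu^2$, over a field with $|\F|\geq n$ -- meets the hypotheses of the reducibility criterion of Section~\ref{generalsection}, and to deduce that $\calV$ possesses a nonzero proper invariant subspace $W_0$. I expect essentially all the difficulty of the theorem to sit in this step, which is also where the cardinality assumption is consumed.

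The next step upgrades $W_0$ to a totally singular invariant subspace. Since every element of $\calV$ is $b$-symmetric, $W_0^\bot$ is again $\calV$-invariant, hence so is the totally singular subspace $X:=W_0\cap W_0^\bot$. If $X\neq\{0\}$ we are done; otherwise $V$ is the orthogonal direct sum of $W_0$ and $W_0^\bot$, two nonzero $\calV$-invariant nondegenerate subspaces -- thus symplectic, of even dimensions $2k$ and $2(\nu-k)$ with $1\leq k\leq\nu-1$ -- and restricting $\calV$ to each summand gives, by Theorem~\ref{majoTheo}(a), $\dim\calV\leq k^2+(\nu-k)^2<\nu^2$, a contradiction. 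Hence $\calV$ admits a nonzero proper totally singular invariant subspace $X$, of some dimension $d\in\lcro 1,\nu\rcro$.

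Now write $V$ as the orthogonal direct sum of a hyperbolic space $X\oplus Z$ (with $Z$ totally singular, so that $b$ restricts to a perfect pairing between $X$ and $Z$) and a nondegenerate symplectic space $Y$ of dimension $n-2d$ and Witt index $\nu-d$. Every $u\in\calV$ stabilizes $X$ and $X^\bot=X\oplus Y$, so in block form relative to $X,Y,Z$ it is block upper triangular with diagonal blocks $a:=u|_X$, the map $c$ induced on $X^\bot/X\cong Y$, and $e$; the $b$-symmetry relations force $e=-a^\ast$, force $c$ to be $b|_Y$-symmetric, link the blocks $\beta\colon Y\to X$ and $Z\to Y$ (the former free, of dimension $d(n-2d)$, the latter determined by it), and force the block $\delta\colon Z\to X$ to be self-adjoint, hence to range over a space of dimension $\binom{d+1}{2}$. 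As $u$ is recovered from $(a,c,\beta,\delta)$, the map $u\mapsto(a,c,\beta,\delta)$ is injective, so, writing $\calA:=\{u|_X:u\in\calV\}\subset\End(X)$ and $\calB\subset\calS_{b|_Y}$ for the (nilpotent) spaces of the $a$'s and $c$'s, we obtain
$$\nu^2=\dim\calV\leq\dim\calA+\dim\calB+d(n-2d)+\binom{d+1}{2}\leq\binom{d}{2}+(\nu-d)^2+d(n-2d)+\binom{d+1}{2}=\nu^2,$$
invoking the Gerstenhaber bound for $\calA$ and Theorem~\ref{majoTheo}(a) for $\calB$. Equality throughout forces $\dim\calA=\binom{d}{2}$ and $\dim\calB=(\nu-d)^2$, whence $\calA=\calN_{\calF_X}$ for a complete flag $\calF_X=(X_0\subset\cdots\subset X_d=X)$ by the Gerstenhaber--Serezhkin theorem, and $\calB=\WS_{b|_Y,\calF_Y}$ for a maximal partially complete $b|_Y$-singular flag $\calF_Y=(Y_0\subset\cdots\subset Y_{\nu-d})$ by the induction hypothesis (legitimate, since the Witt index of $b|_Y$ is $\nu-d<\nu$ and $\dim Y\leq n\leq|\F|$).

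To finish, concatenate the two flags: $\calF:=(X_0,\dots,X_d,\,X\oplus Y_1,\dots,X\oplus Y_{\nu-d})$ is a maximal partially complete $b$-singular flag of $V$, each step being totally singular because $X\perp Y$ and each $Y_j$ is totally singular in $Y$. For $u\in\calV$, the block description together with $a(X_i)\subset X_{i-1}$ and $c(Y_j)\subset Y_{j-1}$ yields $u(F_i)\subset F_{i-1}$ for all $i$, so $\calV\subseteq\WS_{b,\calF}$; and since $\dim\WS_{b,\calF}=\nu(n-\nu)=\nu^2=\dim\calV$, we conclude $\calV=\WS_{b,\calF}$, which closes the induction. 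As signalled, the genuine content is the reducibility step: once a nonzero proper invariant subspace is available, the rest is bookkeeping with the symplectic structure and the inductive dimension count.
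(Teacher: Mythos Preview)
Your proposal has a genuine gap at exactly the point you yourself flag as ``the crux'': you never actually carry out the reducibility step. The Reducibility Lemma (Lemma~\ref{reduclemma}) does not say that a nilpotent, cube-closed space of the critical dimension is automatically reducible; it says that \emph{if} some $x\in\calV^\bullet\setminus\{0\}$ satisfies condition (C), namely $K(\calV)\subset\F x\oplus\calV x$, \emph{then} $\calV x=\{0\}$. Verifying (C) is precisely where the work lies, and nothing in your list of hypotheses (nilpotent, stable under cubes, dimension $\nu^2$, $|\F|\ge n$) makes (C) automatic. In the paper this verification occupies essentially all of Section~\ref{symproofsection}: one first disposes of the square-zero case by skew-commutation, then shows $p:=\ind(\calV)\ge n-2$, deduces that $\calV$ is pure, proves via the Strong Orthogonality Lemma that $x\otimes_b x\in\calV$ for every $x\in\calV^\bullet$, uses this together with the Linear Density Lemma to force $K(\calV)$ to be totally singular, and only then manufactures a vector $y\in\calV^\bullet$ satisfying (C). None of these steps is trivial, and none is supplied in your proposal. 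Incidentally, stability under cubes is not used anywhere in the paper's proof of Theorem~\ref{symtheo}; it enters only in the proof of Theorem~\ref{alttheo}.

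Your post-reducibility machinery (upgrading to a totally singular invariant subspace, the block decomposition with the dimension identity $\binom{d}{2}+(\nu-d)^2+2d(\nu-d)+\binom{d+1}{2}=\nu^2$, and the flag concatenation) is correct and is a pleasant alternative to the paper's route. But observe that it is also heavier than needed: the Reducibility Lemma, when it applies, yields not merely an invariant subspace but a common kernel vector $x$ (hence $d=1$ in your notation), and in the symplectic setting every nonzero vector is isotropic, so one may invoke Lemma~\ref{liftinglemma} directly and induct on $\dim V$ as the paper does. The general-$d$ analysis and the appeal to Gerstenhaber--Serezhkin for $\calA$ are therefore unnecessary once the hard step is done properly.
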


The lower bounds on the cardinality of $\F$ are relevant because they are the lowest ones so that we can ensure
that the nilpotency of a linear subspace of $\calA_b$ (in Theorem \ref{alttheo}) or $\calS_b$ (in Theorem \ref{symtheo})
be preserved in extending the field of scalars; they are connected to the maximal possible nilindex in $\calA_b$ and in $\calS_b$.
See Sections \ref{maxnilindexsection} and \ref{extensionscalarsSection} for details.

At this point, we should note that the above two results are already known in
the case when $\F$ is algebraically closed. Indeed,
in the first case (respectively, the second one) $\calA_b$ (respectively, $\calS_b$) is a Lie subalgebra of $\End(V)$,
and it is isomorphic to the Lie algebra of the reductive algebraic group $\Ortho(b)$ (respectively, $\Sp(b)$).
Moreover, the set of all unipotent orthogonal automorphisms (respectively, of all unipotent symplectic automorphisms)
that stabilize a given maximal partially complete $b$-singular flag $\calF$ is a Borel subgroup
of that algebraic group. A general theorem of Draisma, Kraft and Kuttler
\cite{DraismaKraftKuttler} then yields Theorems \ref{alttheo} and \ref{symtheo} in that case.
However, one must check a tedious condition (named condition (C) in \cite{DraismaKraftKuttler}), and the proof uses deep results from the theory of algebraic groups. In the special case when $b$ has the maximal possible Witt index (that is, $\lfloor n/2\rfloor$),
one can show through an extension of scalars argument that their result yields ours
(this is not entirely straightforward though, we will discuss this issue in Section \ref{extensionscalarsSection}). In particular, Theorem \ref{symtheo}, with its cardinality assumption, is a direct consequence of the general theorem of Draisma et al. However, this still leaves open the problem
of proving Theorem \ref{alttheo} for small Witt indices.

\subsection{Strategy, and structure of the article}

The aim of the present work is to give a (mostly) self-contained elementary proof of Theorems \ref{alttheo} and \ref{symtheo}.
The proof involves a new technique for the study of spaces of nilpotent matrices, which uses elementary algebraic geometry:
the idea is that, given a nilpotent subspace $\calV$ of $\End(V)$ with maximal nilindex $p$,
one looks at the behaviour of the space $\im u^{p-1}$ when $u$ varies in $\calV$. This idea is inspired by
the topic of spaces of bounded rank operators, where similar techniques have been successful.
Combined with standard trace orthogonality techniques, this yields a very interesting sufficient condition for the reducibility of a nilpotent subspace (Lemma \ref{reduclemma}). That condition will be the key to prove the above theorems, and we will
illustrate its power by giving a new proof of the classical Gerstenhaber theorem under the mild assumption that the
underlying field be of cardinality at least the dimension of the vector space $V$ under consideration.

\vskip 3mm
The remainder of the article is organized as follows.

In Section \ref{generalsection}, we recall the classical trace lemma for nilpotent subspaces of operators
(Lemma \ref{tracelemma}), we obtain key results on the spaces $\im u^{p-1}$, when $p$ is the maximal nilindex in
a nilpotent subspace $\calV$ and $u$ varies in $\calV$; we conclude the section by obtaining the key Reducibility Lemma
(Lemma \ref{reduclemma}).

The tools of Section \ref{generalsection} are used in Section \ref{standardGerstenhabersection} to yield
an efficient new proof of Gerstenhaber's theorem for fields with large cardinality: there, the novelty resides in the
analysis of the spaces with the maximal dimension.

In the next two sections, we lay out some groundwork for the structured Gerstenhaber problem.
In Section \ref{groundworksection1}, we collect results on individual $b$-symmetric or $b$-alternating endomorphisms:
in addition to very basic results and considerations of endomorphisms of small rank (the $b$-symmetric tensors and the $b$-alternating tensors),
we study the maximal nilindex in a space of type $\WS_{b,\calF}$ or $\WA_{b,\calF}$, and we finish
with deeper results on the canonical forms for $b$-symmetric or $b$-alternating endomorphisms: those results are needed in the most
technical parts of the proof of Theorem \ref{alttheo}.
In Section \ref{groundworksection2}, we deal more closely with the structured Gerstenhaber problem:
we recall the inductive proof of Theorem \ref{majoTheo} that already appeared in \cite{dSPStructured1}, and
as a by-product of that proof we obtain an important property of spaces with the maximal dimension (the Strong Orthogonality Lemma, see Lemma \ref{strongorthoLemma}), along with results that allow one to perform an inductive proof. From the stability
under cubes that we have already mentioned, we will also derive another stability result
(Corollary \ref{cubescor}) that will turn out to be essential in the proof of Theorem \ref{alttheo}.
We will close Section \ref{groundworksection2} with a discussion on the extension of scalars: it will be shown in particular that, in order to prove Theorems \ref{alttheo} and \ref{symtheo}, it suffices to do so when the underlying field is \emph{infinite}. This will allow us
to use elementary ideas from algebraic geometry.

Having paved the way in Sections \ref{groundworksection1} and \ref{groundworksection2}, we will then
be ready to prove Theorems \ref{alttheo} and \ref{symtheo}. The easier one is by far the latter, so we will start by
tackling it (Section \ref{symproofsection}), and we will finish with the former (Section \ref{altproofsection}).
In both cases, we consider an infinite underlying field, and the proof works by induction on the dimension of the underlying vector space $V$:
the key is to prove the existence of a non-zero isotropic vector $x$ that is annihilated by all the elements of $\calV$.

\section{General results on spaces of nilpotent matrices}\label{generalsection}

Here, $V$ denotes a finite-dimensional vector space over an arbitrary field $\F$
(here, there is no restriction of characteristic).

\subsection{The trace lemma}

One of the main keys for analyzing nilpotent spaces of endomorphisms
is the so-called trace lemma.
It appeared first in \cite{Mathes}, where it was proved for fields with characteristic zero.
The result was extended in \cite{MacD}, and a variation of the proof was given in \cite{dSPStructured1}.

\begin{lemma}[Trace Lemma]\label{tracelemma}
Let $\calV$ be a nilpotent linear subspace of $\End(V)$. Let $u$ and $v$ belong to $\calV$, and let
$k$ be a non-negative integer such that $k < |\F|$. Then, $\tr(u^k v)=0$.
\end{lemma}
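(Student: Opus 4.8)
The plan is to deduce the lemma from the single fact that a nilpotent endomorphism has zero trace. Fix $u,v\in\calV$. For each scalar $t\in\F$, the endomorphism $u+tv$ lies in $\calV$, hence is nilpotent, hence so is $(u+tv)^{k+1}$, so that $\tr\bigl((u+tv)^{k+1}\bigr)=0$. I would then regard the left-hand side as a polynomial $P(t)\in\F[t]$, obtained by expanding the $(k+1)$-st power and using the additivity of the trace, and I would record two of its coefficients. The coefficient of $t^{k+1}$ is $\tr(v^{k+1})$, which vanishes because $v^{k+1}$ is a power of the nilpotent endomorphism $v$; and the coefficient of $t$ is
\[
\sum_{i=0}^{k}\tr\bigl(u^{i}\,v\,u^{k-i}\bigr)=(k+1)\,\tr(u^{k}v),
\]
by cyclic invariance of the trace.

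Since its top coefficient vanishes, $P$ has degree at most $k$; as $k<|\F|$ and $P$ vanishes at every point of $\F$, the polynomial $P$ is identically zero. In particular its coefficient of $t$ vanishes, i.e.\ $(k+1)\,\tr(u^{k}v)=0$. This already establishes the lemma whenever $k+1$ is invertible in $\F$, in particular in characteristic $0$ and, more generally, whenever $\car \F$ does not divide $k+1$.

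The delicate point -- which I expect to be the main obstacle -- is the case where $\car \F$ is a prime $p$ dividing $k+1$ (so $p\le k+1\le|\F|$), for there the relation just obtained is vacuous. One then needs finer information, and I would argue by induction on $k$: assuming that $\tr(x^{j}y)=0$ for all $x,y\in\calV$ and all $j<k$, one brings in the further polynomial identities $\tr\bigl((u+s_{1}w_{1}+\cdots+s_{r}w_{r})^{k+1}\bigr)=0$ in $\F[s_{1},\dots,s_{r}]$ (each valid because the coefficient of any pure power $s_{i}^{k+1}$ equals $\tr(w_{i}^{k+1})=0$, so that the degree in each variable is at most $k<|\F|$), specialises $w_{1},\dots,w_{r}$ to well-chosen elements of $\calV$, and uses the inductive hypothesis to cancel every surviving trace of a product of at most $k$ factors; the crux is to arrange the specialisation so that what remains is a unit multiple of $\tr(u^{k}v)$. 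This combinatorial refinement of the characteristic-zero case is what requires the real work; it is carried out by MacDonald \cite{MacD} and, by a somewhat different route, in \cite{dSPStructured1}. One can sidestep the whole difficulty when $|\F|\ge\dim V$: the identity $\det(I-su-tv)=1$, valid for all $(s,t)\in\F^{2}$, is then a polynomial identity in $\F[s,t]$, and extracting its coefficient of $t$ yields $\sum_{i\ge 0}s^{i}\,\tr(u^{i}v)=0$, whence $\tr(u^{i}v)=0$ for every $i\ge 0$.
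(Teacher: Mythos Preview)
The paper does not prove this lemma; it merely states it and cites \cite{Mathes}, \cite{MacD}, and \cite{dSPStructured1}. Your proposal therefore already does more than the paper. Your coefficient-of-$t$ argument in $\tr\bigl((u+tv)^{k+1}\bigr)$ is correct and complete whenever $k+1$ is a unit in $\F$ (this is essentially the original argument of \cite{Mathes}), and for the residual case $\car\F\mid k+1$ you, like the paper, defer to the literature. Your inductive sketch for that case is in the right spirit but imprecise as stated: the inductive hypothesis $\tr(x^jy)=0$ concerns only products of the special shape ``power of one element times another,'' not arbitrary words of length at most $k$, so the phrase ``cancel every surviving trace of a product of at most $k$ factors'' hides nontrivial work. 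You are right to flag this as the crux.

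Your determinant argument is a genuine addition not present in the paper, and it is correct. Since $\det v=0$ and $\det u=0$, the polynomial $\det(I-su-tv)-1$ has degree at most $n-1$ in each variable, so under $|\F|\geq n$ the pointwise identity becomes a polynomial identity; Jacobi's formula together with $(I-su)^{-1}=\sum_{i\geq 0}s^iu^i$ then gives the coefficient of $t$ as $-\sum_{i\geq 0}s^i\tr(u^iv)$, forcing every $\tr(u^iv)$ to vanish. This self-contained argument covers all uses of the Trace Lemma in Section~\ref{standardGerstenhabersection} and in the proof of Theorem~\ref{symtheo}, where $|\F|\geq n$ is assumed; it does not, however, reach the case $|\F|\geq 2\nu+1<n$ allowed in Theorem~\ref{alttheo}, so the full strength of the lemma (and hence the cited references) is still needed there.
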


\subsection{The generic nilpotency index}

Here, we assume that $V \neq \{0\}$.

\begin{Def}
Let $\calV$ be a nilpotent subspace of $\End(V)$.
The greatest nilindex among the elements of $\calV$ is called the \textbf{generic nilindex} of $\calV$
and denoted by $\ind(\calV)$.
Then, with $p:=\ind(\calV)$, we set
$$\calV^\bullet:=\underset{v \in \calV}{\bigcup} \im v^{p-1} \quad \text{and} \quad
K(\calV):=\Vect (\calV^\bullet).$$
We say that $\calV$ is \textbf{pure} when $\rk u^{p-1} \leq 1$ for all $u \in \calV$.
\end{Def}

In other words, $\calV$ is pure if and only if every element of $\calV$ with nilindex $p$
has exactly one Jordan cell of size $p$.

\begin{Not}
For $x \in V$ and a linear subspace $\calV$ of $\End(V)$, we set
$$\calV x:=\{v(x) \mid v \in \calV\},$$
which is a linear subspace of $V$.
\end{Not}

Let $\calV$ be a nilpotent linear subspace of $\End(V)$.
Given $x \in V \setminus \{0\}$, note that $\calV x$ is linearly disjoint
from $\F x$: indeed, for all $v \in \calV$ we have $v(x) \not\in \F x \setminus \{0\}$ otherwise
$x$ would be an eigenvector of $v$ with a non-zero associated eigenvalue.

Here is an important observation:

\begin{prop}\label{tangentprop}
Let $\calV$ be a nilpotent subspace of $\End(V)$ with generic nilindex $p$.
Assume that $|\F| \geq p$. Let $x \in \calV^\bullet$ and $u \in \calV$ be such that $x \in \im u^{p-1}$.
Then, $\calV x \subset u(K(\calV))$.
\end{prop}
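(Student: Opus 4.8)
The plan is to exploit the Trace Lemma together with a genericity argument on the polynomial functions $v \mapsto v^{p-1}$. Fix $x \in \calV^\bullet$ and $u \in \calV$ with $x = u^{p-1}(y)$ for some $y \in V$. We want to show that for every $w \in \calV$, the vector $w(x)$ lies in $u(K(\calV))$. Since $u(K(\calV)) = \Vect\bigl(\{u(z) : z \in \im v^{p-1},\ v \in \calV\}\bigr)$, and the target is a linear subspace, it suffices to produce, for each $w$, enough linear relations forcing $w(x)$ into it. The natural device is trace orthogonality: a vector $t \in V$ lies in a subspace $W$ iff every linear form vanishing on $W$ vanishes on $t$; translating linear forms on $V$ into traces against rank-one operators, one reduces to showing $\tr(\phi)=0$ for a suitable family of rank-one (or low-rank) operators $\phi$ built from $w(x)$ and from a functional annihilating $u(K(\calV))$.

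First I would set $W := u(K(\calV))$ and pick an arbitrary linear form $\lambda \in V^*$ vanishing on $W$; the goal becomes $\lambda(w(x)) = 0$ for all $w \in \calV$. The key is to recognize $\lambda(w(x))$ as a trace. Writing $x = u^{p-1}(y)$, we have $\lambda(w(x)) = \lambda\bigl(w\,u^{p-1}(y)\bigr) = \tr\bigl(y \otimes \lambda \cdot w\, u^{p-1}\bigr)$ where $y \otimes \lambda$ denotes the rank-one operator $z \mapsto \lambda(z)\,y$. So I want $\tr\bigl((y\otimes\lambda)\, w\, u^{p-1}\bigr) = 0$. The cleanest way to get this is to view it through a one-parameter (or two-parameter) deformation inside $\calV$: consider $u + s\,w$ for $s$ ranging over $\F$ (legitimate since $\calV$ is a subspace and $|\F| \geq p$ gives enough scalars). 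One expands $(u+sw)^{p}$ and uses that $u+sw$ is nilpotent of index $\leq p$, hence $(u+sw)^{p} = 0$ identically in $s$; extracting the coefficient of the appropriate power of $s$ yields an identity among products of $u$'s and $w$'s. Alternatively — and I suspect this is the intended route — one applies the Trace Lemma directly to $\tr\bigl((u+sw)^{p-1}\,v'\bigr)$ for auxiliary $v' \in \calV$ to pin down how $\im(u+sw)^{p-1}$ varies, then specializes.

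The cleaner implementation: I would argue that $\im(u+sw)^{p-1} \subset K(\calV)$ for all $s$ (true by definition of $K(\calV)$, since $u+sw \in \calV$), differentiate this containment with respect to $s$ at $s=0$. The derivative of $(u+sw)^{p-1}$ at $s=0$ is $\sum_{i=0}^{p-2} u^i\, w\, u^{p-2-i}$, and applying it to $y$ and subtracting the ``diagonal'' term $u^{p-1}$ contributions, one isolates $u^{p-2}w(x')$-type terms where $x' = u^{?}(y)$; iterating this differentiation/specialization scheme (here is where $|\F| \geq p$ is used, to have $p-1$ distinct scalars and invert a Vandermonde system separating the homogeneous pieces of $(u+sw)^{p-1}$ in $s$) lets me conclude that each $u^{i}\,w\,u^{p-2-i}(y) \in K(\calV)$, and in particular the $i = 1$, lowest-order surviving term gives $u\bigl(w\,u^{p-2}(y)\bigr) \in u(K(\calV))$ after noting $w\,u^{p-2}(y) \in K(\calV)$ is handled recursively, eventually delivering $w(x) = w\,u^{p-1}(y) \in u(K(\calV))$ modulo the kernel issues.

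I expect the main obstacle to be the bookkeeping of the Vandermonde separation: one must check that from the single identity $(u+sw)^{p-1}(y) \in K(\calV)$ for $p-1$ (or $p$) values of $s$, one can extract that the individual homogeneous components — in particular the degree-one component $\sum_i u^i w\, u^{p-2-i}(y)$ — lie in $K(\calV)$, and then that the specific term $u^{p-2}w u^0(\cdot)$ or its relatives can be peeled off to land inside $u(K(\calV))$ rather than merely $K(\calV)$. The subtlety is that $K(\calV)$ need not be $u$-invariant, so the descent must be organized so that at each stage one applies $u$ to something already known to be in $K(\calV)$; handling the base case and the potential loss of one power of $u$ (which is exactly why the conclusion is $\calV x \subset u(K(\calV))$ and not $\calV x \subset K(\calV)$) is where care is needed. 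Everything else — the trace rewriting, the nilpotency expansion, the use of the Trace Lemma to kill cross terms — should be routine.
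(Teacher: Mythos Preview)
Your proposal contains the right first ingredient but misses the key second one, and the part you flag as ``the main obstacle'' is indeed where the argument breaks down.

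You correctly identify that from $(u+\lambda w)^{p-1}(y)\in K(\calV)$ for all $\lambda\in\F$, a Vandermonde argument (using $|\F|\geq p$) gives that the linear coefficient
\[
u_1(y)\;:=\;\sum_{i=0}^{p-2} u^{i}\,w\,u^{p-2-i}(y)
\]
lies in $K(\calV)$. That is exactly what the paper does. But then you try to peel off individual summands $u^{i}w\,u^{p-2-i}(y)$ and argue recursively that, e.g., $w\,u^{p-2}(y)\in K(\calV)$. This cannot work: Vandermonde in the single parameter $\lambda$ only separates by total degree in $\lambda$, not the individual noncommutative monomials within a given degree; and there is no evident recursion producing $w\,u^{p-2}(y)\in K(\calV)$. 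Moreover, even granting that, you would still not have isolated $w\,u^{p-1}(y)=w(x)$.

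The missing idea is a second, independent expansion: use nilpotency one power higher. Since $\ind(\calV)=p$, one has $(u+\lambda w)^{p}=0$ for every $\lambda$, and the linear-in-$\lambda$ coefficient of this identity is
\[
u\cdot u_1 \;+\; w\,u^{p-1}\;=\;0.
\]
Evaluating at $y$ gives $w(x)=w\,u^{p-1}(y)=-u\bigl(u_1(y)\bigr)$, and since $u_1(y)\in K(\calV)$ from the first step, you are done: $w(x)\in u(K(\calV))$. No recursion, no term-by-term separation, and no Trace Lemma is needed (the Trace Lemma plays no role in this proposition; your opening gambit with $\tr\bigl((y\otimes\lambda)\,w\,u^{p-1}\bigr)$ is a detour).
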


\begin{proof}
Let $v \in \calV$.
We write $x=u^{p-1}(y)$ for some $y \in V$.
Let $\lambda \in \F$. We write
$$(u+\lambda v)^{p-1}=\sum_{k=0}^{p-1} \lambda^k u_k$$
where the operators $u_0,\dots,u_{p-1}$ belong to $\End(V)$, and more precisely
$$u_1=\sum_{k=0}^{p-2} u^k v\,u^{p-2-k}.$$
Then, for all $\lambda \in \F$, we note that $(u+\lambda v)^{p-1}(y)$ belongs to $\calV^\bullet$.

Now, let $\varphi$ be a linear form that vanishes everywhere on $K(\calV)$. Hence,
$$\forall \lambda \in \F, \; \sum_{k=0}^{p-1} \lambda^k \varphi(u_k(y))=0.$$
Since $|\F|\geq p$, this yields in particular $\varphi(u_1(y))=0$. Varying $\varphi$, we obtain
$u_1(y) \in K(\calV)$.

Finally, since $p=\ind(\calV)$ the (vector-valued) polynomial function
$$\lambda \mapsto (u+\lambda v)^p=\sum_{k=0}^{p-1} \lambda^k u'_k$$
vanishes everywhere on $\F$, and again we deduce that $u'_1=0$.
Here, $u'_1=u u_1+v u^{p-1}$, and hence $v(x)=v(u^{p-1}(y))=u(-u_1(y)) \in u(K(\calV))$.
\end{proof}

The next results will be used in the most technical parts of the proof of Theorems \ref{alttheo} and \ref{symtheo}.

\begin{lemma}\label{basicLDL}
Assume that $\F$ is infinite, and let $\calV$ be a pure nilpotent subspace of $\End(V)$.
Then, $\calV^{\bullet}$ cannot be covered by finitely many proper linear subspaces of $K(\calV)$.
\end{lemma}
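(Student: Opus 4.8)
The plan is to argue by contradiction: suppose $\calV^\bullet$ is covered by finitely many proper linear subspaces $W_1,\dots,W_m$ of $K(\calV)$. The starting observation is that, because $\calV$ is pure with generic nilindex $p$, for each $u\in\calV$ the set $\im u^{p-1}$ is either $\{0\}$ or a line $\F\,w_u$ for some $w_u\in\calV^\bullet$; in particular $\calV^\bullet$ is a union of lines through the origin, so the projectivization $\mathbb P(\calV^\bullet)\subset\mathbb P(K(\calV))$ is the relevant object. Since each $W_i$ is proper, $\mathbb P(W_i)$ is a proper projective subspace. Over an infinite field, $\mathbb P(K(\calV))$ is irreducible and is not the union of finitely many proper linear subvarieties, so it suffices to show that $\mathbb P(\calV^\bullet)$ is \emph{Zariski dense} in $\mathbb P(K(\calV))$ — equivalently, that $\calV^\bullet$ is not contained in any single proper linear subspace of $K(\calV)$. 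That last statement is nothing but the definition of $K(\calV)=\Vect(\calV^\bullet)$: no proper subspace contains $\calV^\bullet$.

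The gap to fill is that "not contained in any proper subspace" is weaker than "not covered by finitely many proper subspaces" — the latter is genuinely stronger and is where one must use infiniteness of $\F$. So the real work is: first, realize $\calV^\bullet$ as (the image of) the set of $\F$-points of an honest algebraic set, so that density arguments apply. The natural candidate is the \emph{image} of the polynomial map $\Phi\colon\calV\times V\to K(\calV)$, $(u,y)\mapsto u^{p-1}(y)$; its image lands in $\calV^\bullet\cup\{0\}$ by definition. I would consider the closure $\overline{\im\Phi}$ in the affine space $K(\calV)$ (here $\calV$ and $V$ are affine spaces over $\F$, so $\Phi$ is a morphism of affine varieties and $\overline{\im\Phi}$ is an irreducible closed subset, being the closure of the image of an irreducible variety). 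Call it $Z$. Then $Z$ is a \emph{cone} (closed under scaling, since $\Phi(\lambda u, y) = \lambda^{p-1}\Phi(u,y)$ already gives all scalars once $p\geq 2$, and the $p=1$ case is trivial as then $\calV=\{0\}$). Moreover $\Vect(\im\Phi)=\Vect(\calV^\bullet)=K(\calV)$, so $Z$ is an irreducible cone in $K(\calV)$ that is not contained in any proper linear subspace.

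Now the endgame: an irreducible algebraic cone $Z$ in an affine space, not contained in any proper hyperplane, cannot have its $\F$-points covered by finitely many proper linear subspaces when $\F$ is infinite. Here is the argument I'd give. If $\calV^\bullet\subset\bigcup_{i=1}^m W_i$ with each $W_i$ proper, then in particular $\im\Phi\subset\bigcup W_i$, so $Z=\overline{\im\Phi}$, being irreducible, is contained in one of the $\overline{W_i}=W_i$ (an irreducible set contained in a finite union of closed sets lies in one of them) — contradicting that $Z$ is not inside any proper subspace. The one subtlety is whether $\im\Phi$ actually equals $\calV^\bullet$ or only $\calV^\bullet\cup\{0\}$, but that is harmless: if $\calV^\bullet\subset\bigcup W_i$ then $\calV^\bullet\cup\{0\}\subset\bigcup W_i$ as well (the $W_i$ contain $0$), so $\im\Phi\subset\bigcup W_i$ still follows.

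The step I expect to be the main obstacle is making the reduction to an \emph{irreducible} algebraic set clean: one must be careful that $\calV^\bullet$ itself need not be Zariski closed, which is precisely why passing to $\im\Phi$ and its closure $Z$ is essential, and one must verify that $Z$ is not contained in a proper subspace — which follows from $\Vect(\calV^\bullet)=K(\calV)$ together with $\im\Phi$ being Zariski dense in $Z$ (so their linear spans agree, as the span of a set equals the span of its closure). Once those two facts are in hand — $Z$ irreducible, $Z\not\subset$ any proper subspace — the covering contradiction is immediate from the irreducibility of $Z$ against a finite closed cover. I would also remark that purity is used only to guarantee that the relevant geometric object is governed by $\im u^{p-1}$ as a union of lines; in fact the argument above does not even need purity for this particular statement, but stating it for pure spaces is all that is needed downstream.
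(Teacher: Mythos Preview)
Your argument is correct. The key step---that $\im\Phi$ is the image of the irreducible space $\calV\times V$ under a polynomial map, hence irreducible, and therefore cannot sit inside a finite union of proper closed subsets without sitting inside one of them---is sound over an infinite field, and the contradiction with $\Vect(\calV^\bullet)=K(\calV)$ is immediate. Passing to the closure $Z$ is harmless but unnecessary: $\im\Phi$ is already irreducible, and a finite union of linear subspaces is already closed.

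The paper's proof takes a more elementary route that avoids invoking irreducibility as a black box. It fixes a minimal hyperplane covering $H_1,\dots,H_N$, picks $x\in\calV^\bullet$ lying only in $H_1$, and then for an arbitrary $y\in\calV^\bullet$ constructs an explicit polynomial path $\gamma(\lambda)=\bigl((1-\lambda)u+\lambda v\bigr)^{p-1}(z)$ inside $\calV^\bullet$ with $\gamma(0)\in\F x$ and $\gamma(1)\in\F y$; the product $\prod_k\varphi_k(\gamma(\lambda))$ vanishes identically, one factor must vanish identically, and evaluating at $0$ forces that factor to be $\varphi_1$, whence $y\in H_1$. This is precisely where purity is used: one needs $\im u^{p-1}=\F x$ and $\im v^{p-1}=\F y$ to know the endpoints of $\gamma$.

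Your approach is more conceptual and, as you correctly observe, does not need purity at all---so it proves a slightly stronger statement. The trade-off is that the paper's argument is entirely self-contained (no appeal to the Zariski topology or to the irreducibility of affine space), which fits the paper's stated goal of keeping the exposition elementary. Both proofs ultimately exploit the same phenomenon: the map $(u,y)\mapsto u^{p-1}(y)$ parametrizes $\calV^\bullet$ by an affine space, and over an infinite field that forces $\calV^\bullet$ to behave like an irreducible object with respect to linear covers.
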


\begin{proof}
Assume otherwise, and consider a minimal covering $(F_1,\dots,F_N)$ of $\calV^{\bullet}$
by linear hyperplanes of $K(\calV)$. Hence, we have linear forms
$\varphi_1,\dots,\varphi_N$ on $K(\calV)$ such that $H_k=\Ker \varphi_k$ for all $k \in \lcro 1,N\rcro$ and
$$\forall x \in \calV^{\bullet}, \; \prod_{k=1}^N \varphi_k(x)=0.$$
Moreover, as this covering is minimal we can find a vector $x \in \calV^\bullet \setminus \{0\}$ that belongs to none of $H_2,\dots,H_N$.
Choose $u \in \calV$ such that $\im u^{p-1}=\F x$. Let $y \in \calV^\bullet\setminus \{0\}$, and choose $v \in \calV$ such that $\F y=\im v^{p-1}$.
Since $\Ker u^{p-1}$ and $\Ker v^{p-1}$ are proper linear subspaces of $V$, they do not cover $V$ and hence we can find a vector
$z \in V \setminus (\Ker u^{p-1} \cup \Ker v^{p-1})$; then,
$u^{p-1}(z)$ is non-zero and collinear with $x$, and $v^{p-1}(z)$ is non-zero and collinear with $y$.
Set
$$\gamma : \lambda \in \F \mapsto \bigl((1-\lambda)u+\lambda v\bigr)^{p-1}(z) \in \calV^\bullet.$$
The polynomial function
$$\lambda \mapsto \prod_{k=1}^N \varphi_k\bigl(\gamma(\lambda)\bigr)$$
is identically zero. Since $\F$ is infinite, this yields an index
$k \in \lcro 1,p\rcro$ such that $\lambda \mapsto \varphi_k(\gamma(\lambda))$ is identically zero.
However, $\gamma(0)$ is non-zero and collinear with $x$, and $\gamma(1)$ is non-zero and collinear with $y$.
It follows from the first point that we must have $k=1$, and then we deduce from the second one that $y \in H_1$.
Varying $y$ yields $\calV^\bullet \subset H_1$, and hence $K(\calV) \subset H_1$. This is absurd.
\end{proof}

The above lemma will be used through the following (essentially equivalent) form:

\begin{lemma}[Linear Density Lemma]\label{LinearDensityLemma}
Assume that $\F$ is infinite, and let $\calV$ be a pure nilpotent subspace of $\End(V)$.
Let $F_1,\dots,F_N$ be proper linear subspaces of $K(\calV)$. Then, there exists a basis of $K(\calV)$
consisting of vectors of $\calV^\bullet \setminus \underset{k=1}{\overset{N}{\bigcup}} F_k$.
\end{lemma}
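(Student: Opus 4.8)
The plan is to deduce the Linear Density Lemma from Lemma \ref{basicLDL} by a straightforward counting-dimension-style induction, building the desired basis one vector at a time. Write $d:=\dim K(\calV)$. I will construct vectors $e_1,\dots,e_d$ of $\calV^\bullet \setminus \bigcup_{k=1}^N F_k$ that are linearly independent; since $\dim K(\calV)=d$, any $d$ linearly independent vectors automatically form a basis, so independence is the only thing to track. The construction is inductive: having chosen linearly independent $e_1,\dots,e_j$ for some $j<d$ (all lying in $\calV^\bullet$ and avoiding the $F_k$), I want to find $e_{j+1}\in\calV^\bullet$ that avoids all the $F_k$ \emph{and} avoids the subspace $\Vect(e_1,\dots,e_j)$.

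The point is that $\Vect(e_1,\dots,e_j)$ is a proper linear subspace of $K(\calV)$ (its dimension is $j<d$), and each $F_k$ is by hypothesis a proper linear subspace of $K(\calV)$. Thus the finite family $\bigl(\Vect(e_1,\dots,e_j),F_1,\dots,F_N\bigr)$ consists of finitely many proper linear subspaces of $K(\calV)$, and by Lemma \ref{basicLDL} their union cannot contain $\calV^\bullet$. Hence there exists $e_{j+1}\in\calV^\bullet$ outside all of them; in particular $e_{j+1}\notin F_k$ for every $k$, and $e_{j+1}\notin\Vect(e_1,\dots,e_j)$, so $e_1,\dots,e_{j+1}$ are linearly independent. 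The base case $j=0$ is the same argument with the empty independent set: Lemma \ref{basicLDL} applied to $F_1,\dots,F_N$ alone yields a nonzero vector of $\calV^\bullet$ outside their union (note that if $K(\calV)=\{0\}$ the statement is vacuous, and otherwise $\calV^\bullet\neq\{0\}$ since it spans $K(\calV)$). Iterating until $j=d$ produces the required basis.

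I do not expect any genuine obstacle here: the lemma really is, as the text says, ``essentially equivalent'' to Lemma \ref{basicLDL}, and the only mild subtlety is the bookkeeping observation that at each stage the span of the already-chosen vectors is a \emph{proper} subspace of $K(\calV)$, which is exactly what licenses another application of Lemma \ref{basicLDL}. One should also record at the outset that $\calV$ being pure and nilpotent is precisely the hypothesis of Lemma \ref{basicLDL}, so it transfers verbatim, and that $\calV^\bullet$ spanning $K(\calV)$ (by the definition of $K(\calV)$) is what guarantees a $d$-element independent family can be extracted at all.
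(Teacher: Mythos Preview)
Your argument is correct and, like the paper's, reduces the Linear Density Lemma directly to Lemma \ref{basicLDL}. The only difference is packaging: the paper gives a one-line contrapositive argument (if no such basis exists then $\calV^\bullet \setminus \bigcup_k F_k$ lies in some hyperplane $H$ of $K(\calV)$, whence $F_1,\dots,F_N,H$ cover $\calV^\bullet$, contradicting Lemma \ref{basicLDL}), whereas you build the basis inductively by applying Lemma \ref{basicLDL} once at each step with the extra proper subspace $\Vect(e_1,\dots,e_j)$. Both are valid; the paper's version is slightly more economical, invoking Lemma \ref{basicLDL} once rather than $d$ times.
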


\begin{proof}
Assuming otherwise, some linear hyperplane $H$ of $K(\calV)$ would include
$\calV^\bullet \setminus \underset{k=1}{\overset{N}{\bigcup}} F_k$, and hence the proper linear subspaces
$F_1,\dots,F_N,H$ would cover $\calV^\bullet$, contradicting Lemma \ref{basicLDL}.
\end{proof}

\subsection{Application to a reducibility lemma for nilpotent subspaces of operators}

The key to most of the theorems in this article is the following lemma, which relies critically on
Proposition \ref{tangentprop}:

\begin{lemma}[Reducibility Lemma]\label{reduclemma}
Let $V$ be a finite-dimensional vector space over a field $\F$.
Let $\calV$ be a nilpotent linear subspace of $\End(V)$ such that $|\F| \geq \ind(\calV)$.
Let $x \in \calV^\bullet \setminus \{0\}$ be such that
$$\text{(C)} : \quad K(\calV) \subset \F x \oplus \calV x.$$
Then, $\calV x=\{0\}$.
\end{lemma}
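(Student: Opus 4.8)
The plan is to argue by contradiction: assume $\calV x \neq \{0\}$, and aim to produce an element of $\calV$ whose nilindex exceeds $p := \ind(\calV)$, which is absurd. The idea is that condition (C) forces a ``transitivity'' of the action of $\calV$ near $x$, and combined with Proposition \ref{tangentprop} this lets us build a long chain $x, v_1(x), v_2 v_1(x), \dots$ that never terminates.

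First I would pick some $v \in \calV$ with $v(x) \neq 0$. By Proposition \ref{tangentprop}, since $x \in \calV^\bullet$ and (choosing $u \in \calV$ with $x \in \im u^{p-1}$) we have $\calV x \subset u(K(\calV))$; but actually what I want to exploit is the reverse interplay. The cleaner route: apply Proposition \ref{tangentprop} with the \emph{same} $u$ for which $x = u^{p-1}(y)$. Then for every $w \in \calV$, $w(x) = u(-u_1(y))$ with $u_1(y) \in K(\calV)$. Using (C), write $u_1(y) = \alpha x + w'(x)$ for some scalar $\alpha$ and some $w' \in \calV$. Now here is the key point: I claim $u(x) = 0$ (equivalently $\alpha$'s contribution vanishes). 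Indeed $x \in \im u^{p-1} \subset \im u$, and since $u$ is nilpotent with $u^{p-1}(y) = x \neq 0$, we have $u^{p-1}(x) = u^{p}(y) \cdot(\text{something})$; more carefully, if $u(x) \neq 0$ then $u^{p}(y) = u(x) \neq 0$ only if \dots — I need to check that $u^{p-1}(x) = 0$, which follows because $u^{p-1}(x)=u^{p-1}(u^{p-1}(y))=u^{2(p-1)}(y)=0$ as $2(p-1)\geq p$ whenever $p \geq 2$ (the case $p=1$ being trivial since then $\calV = \{0\}$). This shows $x \in \ker u^{p-1}$ is false in general, so I must be more careful; the correct deduction is simply that $u(x)$ is \emph{again} in $\calV^\bullet$-adjacent data, and one should track the whole chain.

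Let me restructure. The cleanest approach: show by induction on $k$ that for all $w_1, \dots, w_k \in \calV$, the vector $w_k \cdots w_1(x)$ lies in $\im u^{p-1-k} + (\text{lower order terms in } \calV x)$, leading to the statement that $u^{p-1}$ composed with a suitable element of $\calV$ acting on $y$ can be made nonzero to order $p$. Concretely, the main lemma to extract from Proposition \ref{tangentprop} is: if $z \in \im u^{j}$ for some $j$ and $z \in \F x \oplus \calV x$ via $z = \beta x + w(z')$, then iterating produces an element of $\calV$ with $u \cdot (\text{that element}) \cdot u^{\dots}$ having nilindex $> p$. The honest statement I would prove is: under (C), if $v(x) \neq 0$ for some $v \in \calV$, then the operator $u + \lambda v$ (or a well-chosen combination) has nilindex $\geq p+1$ for generic $\lambda$, using that $|\F| \geq p$ guarantees enough values of $\lambda$ to make a polynomial identity fail. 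This is exactly the mechanism already rehearsed in the proof of Proposition \ref{tangentprop}, where the vanishing of $u'_1 = uu_1 + vu^{p-1}$ was used; running that argument \emph{in reverse} — i.e., if $v(x) \notin u(K(\calV))$ we'd contradict $\ind(\calV) = p$ — is the template, and condition (C) is what lets us promote ``$v(x) \in u(K(\calV))$'' into ``$v(x) \in u(\F x \oplus \calV x) = u(\calV x)$'' (since $u(x)$, being $u^{p}(y)$, vanishes), and then bootstrap.

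The main obstacle I anticipate is the bootstrapping bookkeeping: showing that once $\calV x \neq \{0\}$, condition (C) propagates to give $\calV(v(x)) \subset u(\calV x) \subset u(\calV x)$ etc., so that one gets an infinite strictly-growing flag $\F x \subsetneq \F x + \calV x \subsetneq \cdots$ inside the finite-dimensional $V$ — or directly an element of nilindex $p+1$. Making the degree count in $\lambda$ come out right (so that the relevant coefficient is $u_1$-type and survives because $|\F| \geq p$) is delicate, but it is precisely the same count as in Proposition \ref{tangentprop}, so I would model the argument on that proof, with condition (C) inserted at the step where one currently concludes only ``$\in u(K(\calV))$''. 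I expect the final contradiction to be: $K(\calV) \subset \F x \oplus \calV x$ together with $\calV x \neq \{0\}$ forces $\dim K(\calV)$ to exceed what the nilindex $p$ permits, or forces some $w \in \calV$ with $w^{p} \neq 0$.
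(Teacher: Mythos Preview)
You have the right key step, but you bury it and then fail to see the immediate finish. In your second restructuring paragraph you correctly observe that Proposition~\ref{tangentprop} gives $\calV x \subset u(K(\calV))$, that condition~(C) gives $K(\calV) \subset \F x \oplus \calV x$, and that $u(x) = u^p(y) = 0$ (since $u \in \calV$ has nilindex at most $p$). Combining these yields
\[
\calV x \;\subset\; u(\F x \oplus \calV x) \;=\; u(\calV x).
\]
From here the proof is one line: iterate to obtain $\calV x \subset u^k(\calV x)$ for every $k \geq 1$, and taking $k = p$ gives $\calV x \subset u^p(\calV x) = \{0\}$. There is no need for a contradiction hypothesis, no element of nilindex $p+1$ to construct, no infinite flag, and no ``bookkeeping obstacle''. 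Your instinct to bootstrap was exactly right, but the bootstrap is simply iterating the nilpotent operator $u$ on the inclusion you already have; everything after that point in your proposal is a detour.

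For comparison, the paper organizes the same idea via a \emph{height} argument: defining the height of a vector $z$ as the largest $k$ with $z \in \im u^k$, it picks $y \in K(\calV)$ of minimal finite height $h$, writes $y = \lambda x + v(x)$ via (C), and uses Proposition~\ref{tangentprop} to write $v(x) = u(z)$ with $z \in K(\calV)$; since $z$ has height $\geq h$ and $x$ has height $p-1$, the vector $y$ has height $\geq \min(h+1,\,p-1)$, forcing $h = p-1$. Thus $K(\calV) \subset \im u^{p-1}$, and then $\calV x \subset u(K(\calV)) \subset \im u^p = \{0\}$. Your route, once completed with the iteration, is slightly more direct; the paper's route produces the intermediate fact $K(\calV) \subset \im u^{p-1}$ along the way.
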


In practice, condition (C) is not always easy to obtain, but we will get it with little effort in the case of the traditional Gerstenhaber theorem.

\begin{proof}
Set $p:=\ind(\calV)$ and choose $u \in \calV$ such that $x \in \im u^{p-1}$.

For a vector $y \in V$, the height of $y$ (with respect to $u$) is defined as the supremum
of the integers $k \geq 0$ such that $y \in \im u^k$ (hence, it is $+\infty$ if $y=0$, and at most $p-1$ otherwise).

Since $K(\calV)$ contains the non-zero element $x$, we can choose an element
$y \in K(\calV)$ with minimal height $h<+\infty$.
Condition (C) gives an operator $v \in \calV$ and a scalar $\lambda$ such that $y=v(x)+\lambda x$.
Proposition \ref{tangentprop} then yields a vector $z \in K(\calV)$ such that $v(x)=u(z)$, whence $y=u(z)+\lambda x$.
Since the height of $x$ is at least $p-1$ and the one of $u(z)$ is at least $h+1$, we get that the height
of $y$ would be at least $h+1$ if $h<p-1$, which would be a contradiction in that case.
Hence, $h=p-1$, leading to $K(\calV) \subset \im u^{p-1}$.

We conclude, by using Proposition \ref{tangentprop} once more, that
$$\calV x \subset u(K(\calV))\subset u\bigl(\im u^{p-1}\bigr)=\{0\}.$$
\end{proof}

\section{A new proof of Gerstenhaber's theorem for fields with large cardinality}\label{standardGerstenhabersection}

In this section, we give a new proof of Gerstenhaber's theorem for fields whose cardinality is large
enough with respect to the dimension of the underlying space. The proof takes advantage of the Reducibility Lemma.
Here, $\F$ denotes an arbitrary field (possibly of characteristic $2$).

The proof works by induction. Gerstenhaber's theorem is obviously true for spaces with dimension at most $1$,
so in the rest of the section we fix an integer $n>1$ and we assume that Gerstenhaber's theorem holds
whenever the dimension of the underlying space is less than $n$ and less than or equal to the cardinality of $\F$. We let $V$ be an $n$-dimensional vector space over a field $\F$ with at least $n$ elements,
and $\calV$ be a nilpotent linear subspace of $\End(V)$.

The first part of the proof, which deals with the inequality, is classical.
It can be found in \cite{MacD}: we have to reproduce it because it is a necessary step towards the second part.

\subsection{Proof of the dimension inequality}\label{standardGinequality}

Some preliminary work is required here. Denote by $V^\star$ the dual space of $V$.
Let $f \in V^\star$ and $x \in V$. The endomorphism $y \in V \mapsto f(y)\,x$
is denoted by $f \otimes x$. Its trace equals $f(x)$; it is nilpotent if and only if $f(x)=0$.
Now, fix $x \in V \setminus \{0\}$. We set
$$\calV^x:=\bigl\{f \in V^\star : f \otimes x \in \calV\bigr\}.$$
Through $f\in V^\star \mapsto f \otimes x$, the space $\calV^x$ is isomorphic to the linear subspace of $\calV$
consisting of its elements whose range is included in $\F x$.

\begin{lemma}\label{ortholemmaClassicalGersten}
Let $x \in V \setminus \{0\}$.
The space $\calV^x$ is dual-orthogonal to $\F x \oplus \calV x$.
\end{lemma}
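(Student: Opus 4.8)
The plan is to unwind the definitions and use the Trace Lemma. Recall that $\calV^x = \{f \in V^\star : f \otimes x \in \calV\}$ and $f \otimes x$ has trace $f(y)$ evaluated... more precisely, for $g \in V^\star, z \in V$, the composite $(f\otimes x)\circ(g\otimes z)$ sends $y$ to $g(y)\,(f\otimes x)(z) = g(y)f(z)\,x$, so it equals $f(z)\,(g\otimes x)$ and has trace $f(z)g(x)$. The statement to prove is that $\calV^x$ is dual-orthogonal to the subspace $\F x \oplus \calV x$ of $V$, i.e.\ that every $f \in \calV^x$ vanishes on $\F x$ and on $\calV x$.

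First I would dispose of the $\F x$ part: if $f \in \calV^x$ then $f \otimes x \in \calV$ is nilpotent, so its trace $f(x)$ is zero; hence $f$ vanishes on $\F x$. Next, for the $\calV x$ part, fix $f \in \calV^x$ and $v \in \calV$; I must show $f(v(x)) = 0$. The idea is to compute the trace of a suitable product of elements of $\calV$ and invoke Lemma \ref{tracelemma} (with $k=1$, which requires $1 < |\F|$, guaranteed since $|\F| \geq n > 1$). Consider the two elements $f \otimes x \in \calV$ and $v \in \calV$. Then $v \circ (f\otimes x)$ is the endomorphism $y \mapsto f(y)\,v(x)$, which is exactly $f \otimes v(x)$; its trace is $f(v(x))$. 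By the Trace Lemma applied to $u := f\otimes x$ and $v$ (so $\tr(u^1 v) = \tr(uv) = 0$), wait — I need $\tr(v u)$, but traces of products are symmetric, so $\tr(v\circ(f\otimes x)) = \tr((f\otimes x)\circ v)$, and either way the Trace Lemma gives $0$. Hence $f(v(x)) = 0$.

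Combining the two computations, $f$ annihilates both $\F x$ and $\calV x$, hence the entire subspace $\F x \oplus \calV x$, which is precisely the assertion that $\calV^x$ lies in the dual-orthogonal complement of $\F x \oplus \calV x$.

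There is really no serious obstacle here; the only subtlety worth a line is checking that the relevant product of operators is again expressible in the form $f' \otimes x'$ so that its trace is transparent, and confirming the cardinality hypothesis $|\F| > 1$ needed to legitimately apply the Trace Lemma with $k=1$. I would present this as a short three-line proof. One should also note that the lemma as stated asserts dual-orthogonality in one direction only (inclusion of $\calV^x$ in the annihilator), which is all that the argument above delivers and all that is needed for the dimension count; no reverse inclusion is claimed.
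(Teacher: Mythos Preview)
Your proposal is correct and follows essentially the same approach as the paper: both arguments obtain $f(x)=0$ from the nilpotency of $f\otimes x$, and $f(v(x))=0$ by computing the trace of the product of $f\otimes x$ and $v$ and invoking the Trace Lemma with $k=1$. The only cosmetic difference is that the paper composes in the order $(f\otimes x)\circ u=(f\circ u)\otimes x$ while you compute $v\circ(f\otimes x)=f\otimes v(x)$; as you note, trace symmetry makes these interchangeable.
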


\begin{proof}
Let $f \in \calV^x$ and $u \in \calV$. Then, $(f \otimes x) \circ u =(f \circ u) \otimes x$ has trace $f(u(x))$.
The Trace Lemma applied to $k=1$ yields $f(u(x))=0$. Besides, $f(x)=0$ because $f \otimes x$ is nilpotent,
and the conclusion follows.
\end{proof}

Now, we fix an arbitrary vector $x \in V \setminus \{0\}$.

Denote by $\calU$ the kernel of $u \in \calV \mapsto u(x) \in \calV x$.
Then, every element $u$ of $\calU$ induces a nilpotent endomorphism $\overline{u}$ of $V/\F x$, and
$$\calV \modu x:=\{\overline{u} \mid u \in \calU\}$$
is a nilpotent linear subspace of $\End(V/\F x)$.
By induction, we have
$$\dim (\calV \modu x) \leq \dbinom{n-1}{2}\cdot$$
Denote by $\calU'$ the kernel of $u \in \calU \mapsto \overline{u} \in \calV \modu x$.
Obviously, $\calU'$ is the set of all $u \in \calV$ with range included in $\F x$, whence
$$\calU'=\{f \otimes x \mid f \in \calV^x\} \quad \text{and} \quad \dim \calU'=\dim \calV^x.$$
Applying the rank theorem twice, we obtain
$$\dim \calV=\dim \calV x +\dim \calV^x+\dim (\calV \modu x).$$
By Lemma \ref{ortholemmaClassicalGersten}, $\dim \calV x +\dim \calV^x \leq n-1$, and hence
$$\dim \calV \leq n-1+\dbinom{n-1}{2}=\dbinom{n}{2}.$$

\subsection{Spaces with the maximal dimension}

Assume now that $\dim \calV=\dbinom{n}{2}$. Then, from the above inequalities, we get, for every $x \in V \setminus \{0\}$, the two equalities
$$\dim \calV x +\dim \calV^x=n-1 \quad \text{and} \quad \dim (\calV \modu x)=\dbinom{n-1}{2}.$$
Using Lemma \ref{ortholemmaClassicalGersten} once more, we deduce:

\begin{claim}\label{doubleorthogonalGerstenhaberClassical}
For all $x \in V \setminus \{0\}$, the space $\F x\oplus \calV x$ is the dual orthogonal of $\calV^x$.
\end{claim}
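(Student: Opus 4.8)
The plan is to derive Claim~\ref{doubleorthogonalGerstenhaberClassical} from the dimension count already established, using the dual-orthogonality relation of Lemma~\ref{ortholemmaClassicalGersten} together with a dimension comparison in $V^\star$. First I would record that by Lemma~\ref{ortholemmaClassicalGersten}, $\calV^x$ is dual-orthogonal to $\F x\oplus \calV x$; equivalently, $\calV^x$ is contained in the annihilator $(\F x\oplus \calV x)^\circ$ of $\F x\oplus \calV x$ in $V^\star$. I would then compare dimensions: since the annihilator of a subspace $W\subset V$ has dimension $n-\dim W$, we have
$$\dim (\F x\oplus \calV x)^\circ = n - \dim(\F x\oplus \calV x) = n - (1+\dim \calV x).$$
On the other hand, using the first equality obtained just above the claim, $\dim \calV^x = (n-1) - \dim \calV x = n - 1 - \dim \calV x$, which is exactly $\dim (\F x\oplus \calV x)^\circ$. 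Hence the inclusion $\calV^x \subset (\F x\oplus \calV x)^\circ$ is an equality of spaces of the same finite dimension, so $\calV^x = (\F x\oplus \calV x)^\circ$.

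Next I would translate this back into the statement of the claim. Saying that $\F x\oplus \calV x$ is the dual orthogonal of $\calV^x$ means $\F x\oplus\calV x = {}^\circ(\calV^x) := \{y\in V : f(y)=0 \text{ for all } f\in \calV^x\}$. From $\calV^x = (\F x\oplus \calV x)^\circ$ and the fact that, in finite dimension, ${}^\circ(W^\circ) = W$ for any subspace $W\subset V$, we get ${}^\circ(\calV^x) = {}^\circ\bigl((\F x\oplus \calV x)^\circ\bigr) = \F x\oplus \calV x$, which is precisely the asserted equality. I would be careful to note that $\F x\oplus \calV x$ really is a direct sum: this is the observation, recorded before Proposition~\ref{tangentprop}, that $\calV x$ is linearly disjoint from $\F x$ because $\calV$ is nilpotent and $x\neq 0$ (so that $\dim(\F x\oplus\calV x) = 1 + \dim \calV x$ as used above).

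There is essentially no obstacle here: the only mild subtlety is bookkeeping, namely keeping straight the direction of the orthogonality (annihilator in $V^\star$ versus pre-annihilator in $V$) and invoking the biduality/bi-orthogonality identity correctly in finite dimension. The entire argument is a dimension-equality upgrade of an inclusion, so the one-line verification that the two dimensions agree is the whole content. I would present it in three short sentences: state the inclusion from Lemma~\ref{ortholemmaClassicalGersten}, compute and match the dimensions using the displayed equality $\dim \calV x + \dim \calV^x = n-1$, and conclude equality of the spaces and hence of their (pre-)orthogonals.
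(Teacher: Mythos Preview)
Your proposal is correct and follows exactly the paper's own approach: the paper simply records the equality $\dim \calV x + \dim \calV^x = n-1$, invokes Lemma~\ref{ortholemmaClassicalGersten} once more, and states the claim, leaving the dimension-matching and bi-orthogonality step implicit. You have merely spelled out those implicit steps.
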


Fix $x \in V \setminus \{0\}$.
Let us assume for a moment that $\calV x=\{0\}$.
Then, $\calU=\calV$. We have just seen that $\calV \modu x$ has dimension $\dbinom{n-1}{2}$, hence by induction
there is a complete flag $(G_0,\dots,G_{n-1})$ of $V/\F x$
that is stable under $\overline{v}$ for all $v \in \calV$.
For all $i \in \lcro 1,n\rcro$, we denote by $F_i$ the inverse image of $G_{i-1}$ under the canonical projection from $V$ to $V/\F x$,
and we deduce that $\calF:=(\{0\},F_1,\dots,F_n)$ is a complete flag of $V$ that is stable under every element of $\calV$.
It ensues that $\calV \subset \calN_\calF$, and since the dimensions are equal we conclude that
$\calV=\calN_\calF$.

Hence, in order to conclude it suffices to exhibit a vector $x \in V \setminus \{0\}$ such that
$\calV x=\{0\}$.
To obtain such a vector, we start by analyzing the generic nilindex of $\calV$, which we denote by $p$.


\begin{claim}
One has $p \in \{n-1,n\}$ and $p \geq 2$.
\end{claim}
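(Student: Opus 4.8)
The plan is to bound the generic nilindex $p=\ind(\calV)$ between $n-1$ and $n$, using the fact that $\dim\calV=\binom n2$ is large, and then to observe that $p\geq 2$ follows since $\calV$ cannot be zero (indeed $\binom n2>0$ as $n>1$). The lower bound $p\geq n-1$ is the substantive part. First I would argue by contradiction: suppose $p\leq n-2$, so every element of $\calV$ has nilindex at most $n-2$. A nilpotent endomorphism of nilindex at most $n-2$ on an $n$-dimensional space has, in its Jordan form, at least two blocks, hence kernel of dimension at least $2$; more importantly its image has dimension at most $n-2$. I would like to translate the global constraint "$p$ small" into "$\calV$ lives in a proper subalgebra", which would then contradict the dimension count. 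The cleanest route is: if $p\leq n-2$, then for any $u\in\calV$ and the induced space $\calV\modu x$ of the previous subsection, the generic nilindex drops appropriately, and the inductive dimension bound $\dim(\calV\modu x)\leq\binom{n-1}2$ combined with $\dim\calV x+\dim\calV^x\leq n-1$ is already tight. So the real leverage must come from examining whether $p\leq n-2$ forces one of these inequalities to be strict.

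Concretely, here is the argument I would carry out. Since $\dim\calV=\binom n2$, the inequality analysis of Section~\ref{standardGinequality} is tight for every $x\in V\setminus\{0\}$: we have $\dim\calV x+\dim\calV^x=n-1$ and $\dim(\calV\modu x)=\binom{n-1}2$. Suppose, for contradiction, that $p\leq n-2$. Pick $u\in\calV$ with $\ind(u)=p$ and pick $x\in\im u^{p-1}\setminus\{0\}$ (possible since $p\geq 1$). Then $x\in\calV^\bullet$. I would now try to apply the Reducibility Lemma (Lemma~\ref{reduclemma}) — but that requires verifying condition (C), $K(\calV)\subset\F x\oplus\calV x$, which is not available yet for an arbitrary such $x$; so instead I would exploit tightness directly. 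The key point: by Claim~\ref{doubleorthogonalGerstenhaberClassical}, $\F x\oplus\calV x$ is exactly the dual-orthogonal of $\calV^x$, so $\dim(\F x\oplus\calV x)=n-\dim\calV^x$, i.e. this subspace has codimension $\dim\calV^x$ in $V$. Now I would run the induction on $\calV\modu x$: its generic nilindex is at most $p$, hence at most $n-2=(n-1)-1$, so $\calV\modu x$ is a nilpotent subspace of $\End(V/\F x)$ of generic nilindex strictly less than $\dim(V/\F x)$ yet of maximal dimension $\binom{n-1}2$. By the inductive form of the theorem (the equality case), $\calV\modu x=\calN_\calG$ for a complete flag $\calG$ of $V/\F x$; but $\calN_\calG$ has generic nilindex exactly $\dim(V/\F x)=n-1$, contradicting $\ind(\calV\modu x)\leq p\leq n-2$. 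This contradiction shows $p\geq n-1$. The upper bound $p\leq n$ is immediate since $\dim V=n$.

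The main obstacle I anticipate is the claim that $\ind(\calV\modu x)\leq p$. It is true that each $\overline u$ (for $u\in\calU$) satisfies $u^p=0$ hence $\overline u{}^p=0$, giving $\ind(\overline u)\leq p$; so $\ind(\calV\modu x)\leq p$ is actually clean. The subtler point is making sure the induction hypothesis genuinely applies: we need $\dim(V/\F x)=n-1<n$ and $n-1\leq|\F|$, and we need $\calV\modu x$ to have dimension exactly $\binom{n-1}2$ (which we have from tightness) — both are in hand. One should double-check that the equality case of the inductive statement, asserting $\calV\modu x=\calN_\calG$, indeed forces generic nilindex $n-1$: for a complete flag $\calG=(G_0,\dots,G_{n-1})$ of an $(n-1)$-dimensional space, the "all superdiagonal" nilpotent $N$ with $N(G_i)=G_{i-1}$ lies in $\calN_\calG$ and has nilindex exactly $n-1$, so $\ind(\calN_\calG)=n-1$. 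That is the only place where a small routine verification is needed, and it is genuinely routine. The reason I flag it is that the whole dichotomy "$p\in\{n-1,n\}$" rests on this one comparison, so it is where a careless off-by-one could creep in.
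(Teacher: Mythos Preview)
Your argument is correct and follows essentially the same route as the paper's own proof: both use the tightness $\dim(\calV\modu x)=\binom{n-1}{2}$ together with the induction hypothesis to identify $\calV\modu x$ with $\calN_\calG$ for a complete flag $\calG$, and then exploit that $\calN_\calG$ has generic nilindex $n-1$. The paper phrases this directly (lift an element $v$ with $v^{n-2}\neq 0$ to some $u\in\calV$), whereas you phrase it as a contradiction via the inequality $\ind(\calV\modu x)\leq p$; these are equivalent, and your specific choice of $x\in\calV^\bullet$ is unnecessary but harmless.
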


\begin{proof}
One has $p \geq 2$ since $\calV \neq \{0\}$.

Choose $x \in V \setminus \{0\}$. Then $\calV \modu x$ is a nilpotent linear subspace of $\End(V/\F x)$
with dimension $\dbinom{n-1}{2}$. By induction it reads $\calN_{\calF'}$ for some complete flag $\calF'$ of $V/\F x$,
and hence it contains an element $v$ such that $v^{n-2} \neq 0$. This yields an operator $u \in \calV$
that annihilates $x$ and whose induced endomorphism of $V/\F x$ equals $v$. Obviously $u^{n-2} \neq 0$, and hence $p>n-2$.
\end{proof}

It follows in particular that every element of $\calV$ with nilindex $p$ has exactly one Jordan cell of size $p$
(and one additional Jordan cell of size $1$ if $p=n-1$).

Now, we fix an element $u$ of $\calV$ with nilindex $p$, and we choose $x \in \im u^{p-1} \setminus \{0\}$.

\begin{claim}\label{powersclaim}
Let $v \in \calV$. Then, $v^k(x) \in \F x\oplus \calV x$ for all $k \in \lcro 0,n-1\rcro$.
\end{claim}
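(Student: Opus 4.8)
The plan is to reduce the claim to a statement about $\calV^x$ via the duality established in Claim~\ref{doubleorthogonalGerstenhaberClassical}. Since that claim says $\F x \oplus \calV x$ is precisely the dual orthogonal of $\calV^x$, proving $v^k(x) \in \F x \oplus \calV x$ is equivalent to proving $f(v^k(x)) = 0$ for every $f \in \calV^x$. So the whole argument boils down to a single trace computation, built on the Trace Lemma.

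Concretely, I would fix $f \in \calV^x$, so that $f \otimes x \in \calV$ by definition of $\calV^x$. The composite $v^k \circ (f \otimes x)$ sends $y$ to $f(y)\, v^k(x)$, i.e.\ it equals $f \otimes v^k(x)$, whose trace is $f(v^k(x))$. On the other hand, both $v$ and $f \otimes x$ belong to $\calV$, and the integer $k$ satisfies $k \leq n-1 < n \leq |\F|$, so the Trace Lemma (Lemma~\ref{tracelemma}) applies to the pair $(v, f\otimes x)$ and yields $\tr\bigl(v^k (f\otimes x)\bigr) = 0$. Combining these, $f(v^k(x)) = 0$. Letting $f$ range over $\calV^x$ shows that $v^k(x)$ lies in the dual orthogonal of $\calV^x$, which by Claim~\ref{doubleorthogonalGerstenhaberClassical} is exactly $\F x \oplus \calV x$. (The cases $k = 0$ and $k = 1$ are immediate from $x \in \F x$ and $v(x) \in \calV x$, but the computation above handles all $k \in \lcro 0, n-1 \rcro$ uniformly.)

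I do not expect any genuine obstacle here. The only points requiring minor care are the cyclicity identity $\tr(AB) = \tr(BA)$ used to identify $\tr\bigl(v^k(f\otimes x)\bigr)$ with $\tr\bigl((f\otimes x)v^k\bigr) = f(v^k(x))$, and the verification that the cardinality hypothesis $k < |\F|$ of the Trace Lemma holds throughout the range $k \in \lcro 0, n-1 \rcro$ — which is precisely what the standing assumption $|\F| \geq n$ guarantees, and which also explains why the claim stops at $k = n-1$. The resulting memberships $v^k(x) \in \F x \oplus \calV x$ are then exactly what will be needed to verify condition (C) of the Reducibility Lemma (Lemma~\ref{reduclemma}) for the vector $x$, and hence to produce the common kernel vector driving the induction.
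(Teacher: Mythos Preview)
Your proposal is correct and follows essentially the same route as the paper: apply the Trace Lemma to the pair $(v,\,f\otimes x)$ to obtain $f(v^k(x))=0$ for every $f\in\calV^x$, then invoke Claim~\ref{doubleorthogonalGerstenhaberClassical}. The paper computes $(f\otimes x)\circ v^k=(f\circ v^k)\otimes x$ rather than $v^k\circ(f\otimes x)=f\otimes v^k(x)$, but both have trace $f(v^k(x))$, so the arguments are interchangeable (and in fact your ordering matches the form $\tr(u^k w)$ of the Trace Lemma directly, making the cyclicity remark unnecessary).
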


\begin{proof}
Let $k \in \lcro 0,n-1\rcro$. For all $f \in \calV^x$, the Trace Lemma yields
$$0=\tr\bigl((f \otimes x)\circ v^k\bigr)=\tr\bigl((f \circ v^k) \otimes x\bigr)=f(v^k(x)).$$
From Claim \ref{doubleorthogonalGerstenhaberClassical}, we deduce that $v^k(x) \in \F x\oplus \calV x$.
\end{proof}

\begin{claim}\label{alternativeClaim}
For all $v \in \calV$, one has $v(x)=0$ or $\im v^{p-1} \subset \F x\oplus \calV x$.
\end{claim}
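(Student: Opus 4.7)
My approach will be to fix $v \in \calV$ with $v(x) \neq 0$ and prove $\im v^{p-1} \subset \F x \oplus \calV x$ through a short chain of reductions. First I would dispose of the trivial case $v^{p-1} = 0$. Otherwise $v$ has nilindex exactly $p$, and I will invoke the Jordan-type rigidity noted just before the claim (one $p$-cell plus possibly a $1$-cell when $p = n-1$) to conclude that $\im v^{p-1}$ is a single line $\F x_v$. The goal then reduces to showing $x_v \in \F x \oplus \calV x$.

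The bridge between $x$ and $x_v$ will be forged along the $v$-orbit of $x$. I would introduce $q$, the order of $x$ under $v$: the smallest positive integer with $v^q(x) = 0$. The assumption $v(x) \neq 0$ and the vanishing $v^p = 0$ bracket $q$ in $[2,p]$. Then $y := v^{q-1}(x)$ is a non-zero vector lying in both $\ker v$ (because $v(y) = v^q(x) = 0$) and $\im v^{q-1}$.

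The crux is a Jordan-form calculation: in a Jordan basis of $v$ -- say $z_0, v(z_0), \dots, v^{p-1}(z_0) = x_v$ for the $p$-cell, and $z$ for the $1$-cell when present -- one has $\ker v \cap \im v^{q-1} = \F x_v$. Indeed, $\ker v$ is spanned by $x_v$ and (when applicable) $z$; the vector $z$ is killed by $v^{q-1}$ since $q \geq 2$, so it lies outside $\im v^{q-1}$; and inside the $p$-cell, $\im v^{q-1} = \Vect(v^{q-1}(z_0),\dots,v^{p-1}(z_0))$ already contains $x_v$. Hence $y$ is a non-zero scalar multiple of $x_v$. To conclude, I would apply Claim \ref{powersclaim} with $k = q-1 \leq n-1$ to land $y = v^{q-1}(x)$ in $\F x \oplus \calV x$, which then forces $x_v \in \F x \oplus \calV x$.

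I expect the main obstacle -- really the only non-trivial step -- to be the Jordan-form identification $\ker v \cap \im v^{q-1} = \F x_v$, which depends entirely on the very constrained Jordan type of nilindex-$p$ elements of $\calV$. Without that rigidity -- if, say, $v$ were allowed a second cell of size $\geq 2$ -- this intersection could strictly contain $\F x_v$ and the short bridge $y = \beta x_v$ would collapse.
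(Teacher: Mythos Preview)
Your proof is correct and follows essentially the same route as the paper's: both dispose of the low-nilindex case, invoke the Jordan rigidity (one $p$-cell plus at most a $1$-cell) to see that $\im v^{p-1}$ is a line, take the last non-vanishing iterate $v^{q-1}(x)$, identify it as a generator of that line, and invoke Claim~\ref{powersclaim}. The only cosmetic difference is that the paper pins down the line as $\im v \cap \Ker v$ rather than your $\Ker v \cap \im v^{q-1}$; your slightly sharper intersection works just as well (one small wording quibble: the reason $z \notin \im v^{q-1}$ is not that $v^{q-1}$ kills $z$, but that this forces $\im v^{q-1}$ to lie entirely inside the $p$-cell summand, which excludes $z$).
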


\begin{proof}
Let $v \in \calV$ be such that $v(x) \neq 0$.
If the nilindex of $v$ is not $p$, then $\im v^{p-1}=\{0\} \subset \F x\oplus \calV x$.
Assume otherwise. Then, $\im v^{p-1}=\im v \cap \Ker v$ and $\im v^{p-1}$ has dimension $1$: indeed, either $v$ has exactly one Jordan cell, of size $n$,
or it has one Jordan cell of size $p$ and one Jordan cell of size $1$.
Denoting by $k$ the maximal positive integer such that $v^k(x) \neq 0$, we deduce that $v^k(x)$ spans $\im v^{p-1}$.
We conclude by Claim \ref{powersclaim} that $\im v^{p-1} \subset \F x\oplus \calV x$.
\end{proof}

\begin{claim}
One has $\calV x=\{0\}$ or $K(\calV) \subset \F x\oplus \calV x$.
\end{claim}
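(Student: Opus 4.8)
The plan is to establish the disjunction by proving its second alternative under the negation of the first: assume $\calV x \neq \{0\}$, and show $K(\calV) \subset \F x \oplus \calV x$. Since $K(\calV)=\Vect(\calV^\bullet)$ with $\calV^\bullet=\bigcup_{v\in\calV}\im v^{p-1}$, and since $\F x\oplus\calV x$ is a linear subspace, it suffices to prove that $\im v^{p-1}\subset \F x\oplus\calV x$ for \emph{every} $v\in\calV$. For those $v$ with $v(x)\neq 0$ this is precisely Claim~\ref{alternativeClaim}, so the whole difficulty is concentrated in the vectors $v\in\calV$ that annihilate $x$.

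So I would fix $v\in\calV$ with $v(x)=0$ and exploit the hypothesis $\calV x\neq\{0\}$ by choosing $w\in\calV$ with $w(x)\neq 0$. For $\mu\in\F$ set $w_\mu:=\mu v+w$. When $\mu\neq 0$ one has $w_\mu(x)=w(x)\neq 0$, and for $\mu=0$ one has $w_0=w$, again with $w(x)\neq 0$; hence Claim~\ref{alternativeClaim} applies to $w_\mu$ for \emph{all} $\mu\in\F$, giving $\im w_\mu^{p-1}\subset \F x\oplus\calV x$ for every $\mu\in\F$. Now take any linear form $\varphi\in V^\star$ vanishing on $\F x\oplus\calV x$ and any $y\in V$: expanding $(\mu v+w)^{p-1}$ shows that $\mu\mapsto \varphi\bigl(w_\mu^{p-1}(y)\bigr)$ is a polynomial function in $\mu$ of degree at most $p-1$, whose coefficient of $\mu^{p-1}$ is $\varphi\bigl(v^{p-1}(y)\bigr)$. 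This polynomial vanishes on all of $\F$, and since $|\F|\geq n\geq p>p-1$ it must be the zero polynomial; in particular $\varphi\bigl(v^{p-1}(y)\bigr)=0$. Since a linear subspace of $V$ is the common zero locus of the linear forms vanishing on it, letting $\varphi$ and $y$ vary yields $\im v^{p-1}\subset\F x\oplus\calV x$, completing the argument.

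The one point that deserves care is the degree-versus-cardinality count at the end, which is tight exactly in the regular-nilpotent situation $|\F|=n=p$: a nonzero polynomial of degree $\leq p-1$ can have at most $p-1$ roots, while $\F$ has only $n=p$ elements, so one genuinely needs the polynomial $\mu\mapsto\varphi(w_\mu^{p-1}(y))$ to vanish at \emph{all} $p$ points of $\F$, including $\mu=0$. This is precisely why the proof feeds the unperturbed operator $w$ (the $\mu=0$ instance) into Claim~\ref{alternativeClaim}, not merely the perturbations $w+\mu v$ with $\mu\neq 0$; dropping that extra input would leave one point short and the conclusion would fail in the borderline case. Everything else is routine, resting on the already-established Claim~\ref{alternativeClaim} and on the inequalities $p\leq n\leq|\F|$.
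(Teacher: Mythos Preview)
Your proof is correct. Both your argument and the paper's hinge on Claim~\ref{alternativeClaim} and a polynomial-vanishing argument, but the mechanics differ. The paper does not case-split on whether $v(x)=0$: it takes an arbitrary $v\in\calV$, forms the two-variable homogeneous polynomial
\[
(\lambda,\mu)\longmapsto \psi\bigl((\lambda v_0+\mu v)(x)\bigr)\cdot \varphi\bigl((\lambda v_0+\mu v)^{p-1}(z)\bigr)
\]
of degree $p$ (where $\psi$ is a linear form not vanishing at $v_0(x)$), observes that it vanishes on all of $\F^2$, and then invokes a factorization argument to conclude that the second factor is identically zero. Your approach instead isolates the case $v(x)=0$, notices that then $(\mu v+w)(x)=w(x)\neq 0$ for \emph{every} $\mu$, and so works with a single-variable polynomial of degree at most $p-1$ that vanishes on all of $\F$; the desired coefficient drops out directly. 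Your route is arguably cleaner---it avoids the auxiliary form $\psi$ and the homogeneous factorization step---while the paper's route treats all $v\in\calV$ uniformly without a preliminary case distinction. Both use exactly the same cardinality bound $|\F|\geq p$. Your closing remark about the borderline case $|\F|=n=p$ is accurate but somewhat belabored: since $v(x)=0$ the identity $w_\mu(x)=w(x)$ holds trivially for $\mu=0$ as well, so there is no real danger of ``dropping'' that input.
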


\begin{proof}
Assume that $\calV x \neq \{0\}$ and choose $v_0 \in \calV$ such that $v_0(x) \neq 0$.
We choose a linear form $\psi$ on $V$ such that $\psi(v_0(x)) \neq 0$.
Let $v \in \calV$ and $z \in V$.
For all $(\lambda,\mu) \in \F^2$, the vector
$$\gamma(\lambda,\mu):=\bigl(\lambda v_0+\mu v)^{p-1}(z)$$
belongs to $K(\calV)$.
For all $(\lambda,\mu) \in \F^2$ such that $(\lambda v_0+\mu v)(x) \neq 0$,
we know from Claim \ref{alternativeClaim} that
$$\bigl(\lambda v_0+\mu v\bigr)^{p-1}(z) \in \F x\oplus \calV x.$$
Let $\varphi$ be a linear form that vanishes everywhere on $\F x\oplus \calV x$.
Then, the $p$-homogeneous polynomial function
$$(\lambda,\mu) \mapsto \psi\bigl((\lambda v_0+\mu v)(x)\bigr)\,\varphi\bigl((\lambda v_0+\mu v)^{p-1}(z)\bigr)$$
vanishes everywhere on $\F^2$, and since $|\F| \geq p$ we successively deduce that this function is identically zero,
and that one of the polynomial functions $(\lambda,\mu) \mapsto \psi\bigl((\lambda v_0+\mu v)(x)\bigr)$ and
$(\lambda,\mu)  \mapsto \varphi\bigl((\lambda v_0+\mu v)^{p-1}(z)\bigr)$ is identically zero.
The first one is not identically zero since $\psi(v_0(x)) \neq 0$, and hence the second one is identically zero.
Taking $(\lambda,\mu)=(0,1)$ and varying $\varphi$ yields $v^{p-1}(z) \in \F x \oplus \calV x$.
Varying $v$ and $z$ finally yields $K(\calV) \subset \F x\oplus \calV x$.
\end{proof}

If $K(\calV) \subset \F x \oplus \calV x$ then we deduce from the Reducibility Lemma
that $\calV x=\{0\}$. Otherwise we directly have $\calV x=\{0\}$.

From there, the conclusion follows, as was explained earlier.

\section{General results on $b$-symmetric and $b$-alternating endomorphisms}\label{groundworksection1}

\subsection{Basic results}

We start with three basic results that reproduce, respectively, lemmas 1.4, 1.5 and 2.4 from \cite{dSPStructured1}.

\begin{lemma}\label{stableortholemma}
Let $b$ be a non-degenerate symmetric or alternating bilinear form on $V$, and
let $u \in \calS_b \cup \calA_b$.
Then:
\begin{enumerate}[(a)]
\item For every linear subspace $W$ of $V$ that is stable under $u$, the subspace
$W^\bot$ is stable under $u$.
\item One has $\Ker u=(\im u)^\bot$.
\end{enumerate}
\end{lemma}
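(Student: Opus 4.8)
The plan is to reduce both statements to a single observation: the $b$-adjoint of $u$ is a scalar multiple of $u$. Since $b$ is symmetric or alternating, there is $\eta \in \{1,-1\}$ with $b(a,c)=\eta\,b(c,a)$ for all $a,c\in V$; in particular $b$ is reflexive, so the left and right orthogonal complements of any subspace coincide and the notation $W^\bot$ is unambiguous. Because $b$ is non-degenerate, $u$ admits a well-defined $b$-adjoint $u^\star \in \End(V)$, characterized by $b(u(x),y)=b(x,u^\star(y))$ for all $x,y \in V$.

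The first real step is a one-line computation pinning down $u^\star$. If $u \in \calS_b$, then for all $x,y\in V$ one has $b(x,u(y))=b(y,u(x))=\eta\,b(u(x),y)=\eta\,b(x,u^\star(y))$, so $u=\eta\,u^\star$ by non-degeneracy; if $u \in \calA_b$, the same computation with one sign flipped gives $u=-\eta\,u^\star$. In either case $u^\star=\epsilon u$ for some $\epsilon\in\{1,-1\}$. Two consequences are recorded immediately: $\Ker u^\star=\Ker u$, and any linear subspace stable under $u$ is also stable under $u^\star$.

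Statement (b) then follows directly: for $x\in V$, one has $x\in(\im u)^\bot$ iff $b(u(z),x)=0$ for all $z\in V$, iff $b(z,u^\star(x))=0$ for all $z\in V$, iff $u^\star(x)=0$ by non-degeneracy; that is, iff $x\in\Ker u^\star=\Ker u$. For statement (a), let $W$ be a subspace stable under $u$, fix $x\in W^\bot$ and $w\in W$, and compute $b(w,u(x))=\eta\,b(u(x),w)=\eta\,b(x,u^\star(w))=\eta\epsilon\,b(x,u(w))$; since $u(w)\in W$ and $x\in W^\bot$ is orthogonal to $W$ on both sides, this vanishes, whence $u(x)\in W^\bot$.

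There is no genuinely hard step here; the only thing requiring care is the sign bookkeeping across the four combinations (symmetric or alternating $b$, against $b$-symmetric or $b$-alternating $u$), and this is exactly what the adjoint formalism absorbs into the single sign $\epsilon$. The one point I would be explicit about before the computation in (a) is the reflexivity of $b$, which is what makes $W^\bot$ well-defined and lets me treat it as a right orthogonal complement in the final line.
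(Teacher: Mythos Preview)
Your proof is correct. The paper does not actually supply its own proof of this lemma: it simply records it as reproducing lemma~1.4 of \cite{dSPStructured1}, so there is nothing to compare your argument against here. Your adjoint-based approach, reducing everything to the single identity $u^\star=\epsilon u$ with $\epsilon\in\{1,-1\}$, is the standard one and handles all four sign combinations cleanly; the computations for both (a) and (b) are sound, and your remark about reflexivity is exactly the point that makes $W^\bot$ unambiguous.
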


Note that, for all $u \in \calS_b \cup \calA_b$, every power of $u$ belongs to $\calS_b \cup \calA_b$,
and in particular $\im u^k=(\Ker u^k)^\bot$ for every integer $k \geq 0$.

\begin{lemma}\label{nonisotropicLemma}
Let $b$ be a non-isotropic symmetric bilinear form on $V$, and
let $u \in \calS_b \cup \calA_b$ be nilpotent.
Then, $u=0$.
\end{lemma}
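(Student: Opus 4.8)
The plan is to argue by contradiction. Suppose $u \neq 0$; I will manufacture a nonzero vector $x \in V$ with $b(x,x)=0$, which is impossible since $b$ is non-isotropic. The whole argument rests on one structural fact about $u \in \calS_b \cup \calA_b$, namely Lemma \ref{stableortholemma}(b): $\Ker u = (\im u)^\bot$.

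First I would isolate the purely linear-algebraic input: a \emph{nonzero} nilpotent endomorphism $u$ of $V$ satisfies $\im u \cap \Ker u \neq \{0\}$. Indeed, $\im u$ is a nonzero subspace of $V$ stable under $u$, and the restriction of $u$ to $\im u$ is nilpotent, hence singular; therefore it kills some nonzero vector of $\im u$, and that vector automatically lies in $\Ker u$. (Equivalently, setting $p:=\ind(u)\geq 2$, any nonzero $x\in\im u^{p-1}$ does the job, since $u(x)\in\im u^p=\{0\}$ while $x\in\im u^{p-1}\subset\im u$.) Pick such an $x\neq 0$.

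Now I would apply Lemma \ref{stableortholemma}(b): since $x\in\Ker u=(\im u)^\bot$, the vector $x$ is $b$-orthogonal to every element of $\im u$, and in particular to itself because $x\in\im u$. Hence $b(x,x)=0$ with $x\neq 0$, contradicting the non-isotropy of $b$. Therefore $u=0$. There is no genuine obstacle here: the only mildly delicate point is the elementary observation that a nonzero nilpotent operator has nontrivial $\im u\cap\Ker u$, and beyond that the statement is a one-line consequence of Lemma \ref{stableortholemma}(b) together with the definition of ``non-isotropic''. (Note also that the hypothesis forces $b$ to be symmetric, as an alternating form on a nonzero space is never non-isotropic.)
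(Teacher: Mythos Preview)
Your proof is correct. The paper itself does not supply a proof of this lemma: it merely cites it as one of three basic results reproduced from \cite{dSPStructured1} (see the sentence preceding Lemma~\ref{stableortholemma}). Your argument---producing a nonzero $x\in\im u\cap\Ker u$ and then invoking $\Ker u=(\im u)^\bot$ from Lemma~\ref{stableortholemma}(b) to conclude $b(x,x)=0$---is exactly the natural way to establish the statement and would be the expected proof in the companion paper as well.
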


\begin{lemma}\label{flaglemma}
Let $b$ be a non-degenerate symmetric or alternating bilinear form on $V$, and
let $u \in \calS_b \cup \calA_b$ be nilpotent.
Then, there exists a maximal partially complete $b$-singular flag $\calF$ of $V$
that is stable under $u$.
\end{lemma}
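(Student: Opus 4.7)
The plan is to induct on the Witt index $\nu$ of $b$. The base case $\nu=0$ is immediate: the one-term flag $(\{0\})$ is the unique maximal partially complete $b$-singular flag of $V$, and is trivially stable under every endomorphism. For the inductive step (with $\nu\geq 1$), the key will be to find a non-zero isotropic vector $x\in \Ker u$, after which I can descend to the quotient $F_1^\bot/F_1$, where $F_1:=\F x$.

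To produce such an $x$, I would distinguish two cases. If $u=0$, any non-zero isotropic vector of $V$ works, and one exists since $\nu\geq 1$. If $u\neq 0$, let $k:=\ind(u)\geq 2$ and take any $x\in \im u^{k-1}\setminus\{0\}$; then $x\in \Ker u$ (since $u^k=0$) and $x\in \im u$ (since $k-1\geq 1$). By Lemma \ref{stableortholemma}(b), $\Ker u=(\im u)^\bot$, so $x$ is orthogonal to itself and is therefore isotropic.

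With $x$ in hand, set $F_1:=\F x$; it is stable under $u$ and totally singular, so $F_1\subset F_1^\bot$, and by Lemma \ref{stableortholemma}(a), $u$ also stabilizes $F_1^\bot$. The form $b$ then descends to a non-degenerate bilinear form $\bar b$ of the same symmetry type on the quotient $W:=F_1^\bot/F_1$, whose Witt index equals $\nu-1$ (using the standard fact that any isotropic line extends to a totally singular subspace of maximal dimension). The induced endomorphism $\bar u$ on $W$ is nilpotent and lies in $\calS_{\bar b}\cup\calA_{\bar b}$, so the inductive hypothesis applied to $\bar u$ yields a maximal partially complete $\bar b$-singular flag $(\bar G_0,\dots,\bar G_{\nu-1})$ of $W$ stable under $\bar u$. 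Pulling it back through the canonical projection $\pi:F_1^\bot\to W$, the flag defined by $F_0:=\{0\}$, $F_1:=\F x$, and $F_i:=\pi^{-1}(\bar G_{i-1})$ for $2\leq i\leq \nu$ satisfies $\dim F_i=i$, is stable under $u$, and has $F_\nu$ totally singular in $V$.

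The one step that requires genuine thought is producing the isotropic vector in $\Ker u$, which the $\im u^{k-1}$ trick resolves at once via Lemma \ref{stableortholemma}(b); the remaining verifications (non-degeneracy, symmetry type, and Witt index of $\bar b$; nilpotency and $\bar b$-symmetry or $\bar b$-alternation of $\bar u$) are routine.
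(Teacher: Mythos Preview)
The paper does not actually prove this lemma: it is one of three results quoted without proof from \cite{dSPStructured1} (specifically, Lemma~2.4 there), so there is no in-paper argument to compare against. Your proof is correct and is the natural one: the only substantive point is locating a non-zero isotropic vector in $\Ker u$, and your use of $\im u^{k-1}\subset \Ker u\cap \im u=\Ker u\cap(\Ker u)^\bot$ (via Lemma~\ref{stableortholemma}(b)) handles this cleanly; the descent to $\{x\}^\bot/\F x$ and the lifting of the flag are routine, as you say.
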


As a consequence of this last lemma, we find:

\begin{lemma}\label{SETIlemma}
Let $b$ be a non-degenerate symmetric or alternating bilinear form on $V$, and
let $u \in \calS_b \cup \calA_b$ be nilpotent.
Denote by $\nu$ the Witt index of $b$. Then, there exists
a totally $b$-singular subspace $F$ of $V$ with dimension $\nu$
that is stable under $u$. Moreover, $u$ maps $F^\bot$ into $F$ and
$\rk u \leq 2\nu$.
\end{lemma}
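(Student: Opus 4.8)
The plan is to deduce Lemma \ref{SETIlemma} directly from Lemma \ref{flaglemma} together with part (a) of Lemma \ref{stableortholemma}. First I would apply Lemma \ref{flaglemma} to the nilpotent endomorphism $u \in \calS_b \cup \calA_b$: this gives a maximal partially complete $b$-singular flag $\calF=(F_0,\dots,F_\nu)$ of $V$ stable under $u$. Set $F:=F_\nu$. By construction $\calF$ is partially complete, so $\dim F=\nu$, and it is $b$-singular, so $F \subset F^\bot$, i.e.\ $F$ is totally $b$-singular; and since $\calF$ is stable under $u$ we have $u(F) \subset F$ in particular. This already yields the totally $b$-singular $u$-stable subspace of dimension $\nu$ claimed in the first sentence.

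Next I would establish the two additional assertions. Since $F$ is stable under $u$, Lemma \ref{stableortholemma}(a) shows $F^\bot$ is stable under $u$ as well, so $u(F^\bot) \subset F^\bot$. To upgrade this to $u(F^\bot) \subset F$, I would pass to the quotient: because $b$ is non-degenerate and $F$ is totally $b$-singular of dimension $\nu$, the form $b$ induces a non-degenerate pairing between $V/F^\bot$ and $F$ (via $x+F^\bot \mapsto b(x,\cdot)|_F$), and it also induces a non-degenerate symmetric or alternating form $\overline b$ on $F^\bot/F$ whose Witt index is $0$ (otherwise one could enlarge $F$ to a totally $b$-singular subspace of dimension $>\nu$, contradicting that $\nu$ is the Witt index). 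The endomorphism $u$ induces a nilpotent endomorphism $\overline u$ of $F^\bot/F$ lying in $\calS_{\overline b} \cup \calA_{\overline b}$. Here I should be slightly careful: Lemma \ref{nonisotropicLemma} is stated for a non-isotropic \emph{symmetric} form, so in the case where $b$ is alternating I would instead note that $\overline b$ alternating with Witt index $0$ forces $F^\bot/F=\{0\}$ outright (an alternating form on a nonzero space always has an isotropic vector, indeed every vector is isotropic), so there the claim $u(F^\bot)\subset F$ is trivial; and when $b$ is symmetric, Lemma \ref{nonisotropicLemma} applied to $\overline u$ gives $\overline u=0$, i.e.\ $u(F^\bot)\subset F$.

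Finally, for the rank bound: from $u(F^\bot)\subset F$ we get $\rk(u|_{F^\bot}) \le \dim F = \nu$, and from $u(V)\subset F^\bot$ — which follows from $u(F)\subset F\subset F^\bot$ combined with $\im u=(\Ker u)^\bot \subset F^{\bot}$, using Lemma \ref{stableortholemma}(b) and $F \subset \Ker u$... — hmm, that last containment $F\subset\Ker u$ is not given, so instead I would argue $\im u \subset F^\bot$ more carefully, or simply use $\codim F^\bot = \nu$ and the factorization of $u$ through $V \twoheadrightarrow V/F^\bot \xrightarrow{\ } F^\bot/F$ hmm. Cleanest: $\rk u = \dim u(V)$; write $u(V) = u(F^\bot) + u(W)$ for a complement $W$ of $F^\bot$ (so $\dim W=\nu$), giving $\rk u \le \dim u(F^\bot) + \dim W \le \nu + \nu = 2\nu$. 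The main obstacle I anticipate is precisely this bookkeeping around the induced form on $F^\bot/F$ and making sure the non-isotropy input (Lemma \ref{nonisotropicLemma}) is invoked only in the symmetric case where it applies, handling the alternating case separately; the rest is routine.
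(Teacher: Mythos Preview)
Your proposal is correct and follows essentially the same route as the paper: obtain $F$ from Lemma~\ref{flaglemma}, pass to the induced form $\overline{b}$ on $F^\bot/F$, observe it is non-isotropic (with the alternating case forcing $F^\bot/F=\{0\}$), apply Lemma~\ref{nonisotropicLemma} to get $\overline{u}=0$, and then bound $\rk u$ via a complement of $F^\bot$ exactly as the paper does with $\rk u \le \codim_V(F^\bot)+\dim u(F^\bot)$. Your false starts in the rank paragraph can simply be deleted; the final ``cleanest'' argument you settle on is precisely the paper's.
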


\begin{proof}[Proof of Lemma \ref{SETIlemma}]
By Lemma \ref{flaglemma}, we can find a totally $b$-singular subspace $F$ of $V$ with dimension $\nu$
that is stable under $u$. Then $F \subset F^\bot$ and $F^\bot$ is stable under $u$.
The bilinear form $b$ induces a non-degenerate bilinear form $\overline{b}$ on $F^\bot/F$
that is still $b$-symmetric or $b$-alternating. Moreover, since $F$ is maximal among the totally $b$-singular subspaces of
$V$, the form $\overline{b}$ is non-isotropic, and in particular it is symmetric
(if it is alternating then $F^\bot/F=\{0\}$ and it is also symmetric). Moreover, $u$ also induces an
endomorphism $\overline{u}$ of $F^\bot/F$, which is $\overline{b}$-symmetric or $\overline{b}$-alternating.
By Lemma \ref{nonisotropicLemma}, we deduce that $\overline{u}=0$, which means that $u$ maps $F^\bot$ into $F$.

It follows that
$$\rk u \leq \codim_V(F^\bot)+\dim u(F^\bot) \leq 2 \dim F=2\nu.$$
\end{proof}

\begin{cor}\label{MaxRankNil}
Let $b$ be a non-degenerate symmetric or alternating bilinear form on $V$ (with $n:=\dim V>0$), and
let $u \in \calS_b \cup \calA_b$ be nilpotent.
Denote by $\nu$ the Witt index of $b$. Then, $\rk u \leq \min(2\nu,n-1)$ and
$\ind(u) \leq 2\nu+1$. In addition $\rk u \leq n-2$ and $\ind(u) \leq n-1$ if $n=2\nu$ and $u$ is $b$-alternating.
\end{cor}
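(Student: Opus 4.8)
The plan is to deduce everything from Lemma \ref{SETIlemma} together with the elementary fact that $\ind(w) \leq \rk w + 1$ for every nilpotent endomorphism $w$ of a nonzero space.

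First I would settle the rank bounds. Lemma \ref{SETIlemma} gives $\rk u \leq 2\nu$ outright; and since $u$ is nilpotent and $V \neq \{0\}$, $u$ is not invertible, so $\rk u \leq n-1$. Hence $\rk u \leq \min(2\nu,\, n-1)$. Next I would invoke the inequality $\ind(w) \leq \rk w + 1$ for a nilpotent endomorphism $w$: if $k := \ind(w)$ then the inclusions $\im w \supsetneq \im w^2 \supsetneq \cdots \supsetneq \im w^k = \{0\}$ are all strict (an equality $\im w^j = \im w^{j+1}$ would propagate to $\im w^j = \im w^k = \{0\}$, forcing $w^j = 0$ and $k \leq j$), so $\rk w = \dim \im w \geq k-1$. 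Applied to $u$, this yields $\ind(u) \leq \rk u + 1 \leq 2\nu+1$.

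For the additional assertion, suppose $n = 2\nu$ and $u \in \calA_b$. The key point is that $b$-alternation forces $\rk u$ to be even. Indeed, consider the alternating bilinear form $\beta : (x,y) \mapsto b(x,u(y))$ on $V$; a vector $x$ lies in the left radical of $\beta$ if and only if $b(x, u(y)) = 0$ for all $y \in V$, that is $x \in (\im u)^\bot$, which equals $\Ker u$ by Lemma \ref{stableortholemma}(b). Since $\beta$ is alternating, its right radical coincides with its left radical, so $\beta$ induces a non-degenerate alternating form on $V/\Ker u$; hence $\rk u = \dim(V/\Ker u)$ is even. Combining this with $\rk u \leq n-1$ and the fact that $n = 2\nu$ is even (so that $n-1$ is odd) gives $\rk u \leq n-2$, and therefore $\ind(u) \leq \rk u + 1 \leq n-1$.

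The argument is short and I foresee no genuine obstacle; the only mildly non-routine ingredient is the observation that $u \in \calA_b$ makes $\rk u$ even, via the non-degenerate alternating form induced on $V/\Ker u$, which is immediate once the radical of $\beta$ has been identified with $\Ker u$.
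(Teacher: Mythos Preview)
Your proof is correct and follows essentially the same route as the paper's: both derive $\rk u \leq 2\nu$ from Lemma \ref{SETIlemma}, note $\rk u \leq n-1$ by nilpotency, observe that $u \in \calA_b$ makes $(x,y)\mapsto b(x,u(y))$ alternating with rank equal to $\rk u$ (hence even), and conclude via $\ind(u)\leq \rk u +1$. The only cosmetic difference is that you spell out the radical computation and the chain-of-images argument for $\ind(u)\leq \rk u+1$, whereas the paper invokes these facts in one line.
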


\begin{proof}
We have shown in the previous lemma that $\rk u \leq 2\nu$.
Obviously $\rk u \leq n-1$ because $u$ is non-injective.
Assume finally that $u$ is $b$-alternating and $n=2\nu$.
Since $b$ is non-degenerate, the rank of $u$ equals the one of the alternating bilinear form
$(x,y) \mapsto b\bigl(x,u(y)\bigr)$, and hence it is even. Since $\rk u<2\nu$ we deduce that $\rk u \leq 2\nu-2$.

The statements on the nilindex are then straightforward consequences of the observation that $\rk u \geq \ind(u)-1$.
\end{proof}

\subsection{Symmetric and alternating $b$-tensors}

Here, $b$ denotes an arbitrary non-degenerate symmetric or alternating bilinear form on
a finite-dimensional vector space $V$.
Given vectors $x$ and $y$ of $V$, we denote by
$$x \otimes_b y : z\in V \mapsto b(y,z)\,x+b(x,z)\,y$$
the \textbf{$b$-symmetric tensor} of $x$ and $y$, and we note that $x \otimes_b y$ belongs to
$\calS_b$; likewise,
$$x \wedge_b y : z\in V \mapsto b(y,z)\,x-b(x,z)\,y,$$
which  belongs to $\calA_b$, is called the \textbf{$b$-alternating tensor} of $x$ and $y$.

Given a non-zero vector $x \in V$, the mapping $y \in V \mapsto x \otimes_b y$
is linear and injective: the direct image of a linear subspace $L$ of $V$ under it is denoted by $x \otimes_b L$.

Given a non-zero vector $x \in V$, the mapping $y \in V \mapsto x \wedge_b y$ is linear and its kernel equals $\F x$:
the direct image of a linear subspace $L$ of $V$ under it is denoted by $x \wedge_b L$.

Now, let $x,y$ be $b$-orthogonal vectors of $V$ such that $b(x,x)=0$.
It is easily seen that $x \otimes_b y$ and $x \wedge_b y$ vanish at $x$ and map $\{x\}^\bot$ into $\F x$.
Since $\Vect(x,y) \subset \{x\}^\bot$, it easily follows that both maps are nilpotent (with nilindex at most $3$).
There is a converse statement, and the following result, which was proved in \cite{dSPStructured1}
(see proposition 3.1 there), sums everything up:

\begin{prop}\label{caractensors}
Let $x$ be a non-zero $b$-isotropic vector of $V$.
Let $u \in \calS_b$ (respectively, $u \in \calA_b$). The following conditions are equivalent:
\begin{enumerate}[(i)]
\item The endomorphism $u$ is nilpotent, vanishes at $x$, and maps $\{x\}^\bot$ into $\F x$.
\item There exists $y \in \{x\}^\bot$ such that $u=x \otimes_b y$ (respectively,
$u=x\wedge_b y$).
\end{enumerate}
\end{prop}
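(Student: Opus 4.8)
The implication (ii)$\Rightarrow$(i) has essentially been carried out already in the paragraph preceding the statement: if $x$ is $b$-isotropic and $y\in\{x\}^\bot$, then both $x\otimes_b y$ and $x\wedge_b y$ annihilate $x$ (because $b(x,x)=0$ and $b(y,x)=0$) and send any $z\in\{x\}^\bot$ to $b(y,z)\,x\pm b(x,z)\,y=b(y,z)\,x\in\F x$ (since $b(x,z)=0$). As $\F x\subset\{x\}^\bot$, iterating shows the square already maps into $\F x$ and the cube is zero, so the map is nilpotent. The only work is in the converse.

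For (i)$\Rightarrow$(ii), suppose $u\in\calS_b$ (the $\calA_b$ case being parallel) is nilpotent, $u(x)=0$, and $u(\{x\}^\bot)\subset\F x$. The plan is to produce the vector $y$ by dualizing the ``$\F x$-valued'' linear form $z\mapsto$ (coefficient of $x$ in $u(z)$). Concretely, since $b$ is non-degenerate, write $u(z)=\ell(z)\,x$ for $z\in\{x\}^\bot$, where $\ell\colon\{x\}^\bot\to\F$ is linear; extend $\ell$ to a linear form $\widehat\ell$ on all of $V$, and use non-degeneracy of $b$ to find $y_0\in V$ with $\widehat\ell(z)=b(y_0,z)$ for all $z$. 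Then for $z\in\{x\}^\bot$ one has $u(z)=b(y_0,z)\,x$, while $u(x)=0=b(y_0,x)\,x+b(x,x)\,y_0$. Comparing with $x\otimes_b y_0\colon z\mapsto b(y_0,z)\,x+b(x,z)\,y_0$, the two endomorphisms agree on $\{x\}^\bot$ only up to the correction term $b(x,z)\,y_0$, which vanishes there; but they need not agree off $\{x\}^\bot$, so I must adjust $y_0$.

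To fix this, note $V=\{x\}^\bot\oplus\F w$ for some $w$ with $b(x,w)=1$ (non-degeneracy again, using $b(x,x)=0$). The difference $d:=u-x\otimes_b y_0$ lies in $\calS_b$, vanishes on $\{x\}^\bot$, and satisfies $d(w)\in V$; $b$-symmetry of $d$ forces $b(d(w),z)=b(w,d(z))=0$ for all $z\in\{x\}^\bot$, so $d(w)\in(\{x\}^\bot)^\bot=\F x$, say $d(w)=c\,x$. Replacing $y_0$ by $y:=y_0+\tfrac{c}{2}\,x$ (here $\car\F\neq2$ is used) kills this last term, since $x\otimes_b(\tfrac{c}{2}x)\colon z\mapsto \tfrac{c}{2}b(x,z)\,x+\tfrac{c}{2}b(x,z)\,x=c\,b(x,z)\,x$ takes the value $c\,x$ at $w$ and $0$ on $\{x\}^\bot$. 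One checks $y\in\{x\}^\bot$ because $b(y,x)=\widehat\ell(x)=\ell(x)=0$ (as $x\in\{x\}^\bot$ and $u(x)=0$). Hence $u=x\otimes_b y$. For $u\in\calA_b$ the same argument runs with $x\wedge_b y$ in place of $x\otimes_b y$; the sign flip in the definition does not affect the action on $\{x\}^\bot$ or the computation of the correction term, and the factor $\tfrac12$ works identically.

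The only genuine subtlety — hence the step I would be most careful about — is the off-$\{x\}^\bot$ correction: one must verify that after subtracting $x\otimes_b y_0$ the residual endomorphism is again $b$-symmetric (or $b$-alternating) and nilpotent, so that Lemma \ref{stableortholemma}(b), applied to the $u$-stable line $\F x$ and its orthogonal $\{x\}^\bot$, pins down $d(w)\in\F x$; once that is in hand the scalar adjustment is immediate. Everything else is a routine matching of the explicit formulas for the $b$-tensors against the decomposition $V=\{x\}^\bot\oplus\F w$.
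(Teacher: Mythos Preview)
Your argument for the symmetric case is correct, and the overall strategy (represent the $\F x$-valued linear form by a vector $y_0$ via non-degeneracy, then correct on a complement $\F w$) is sound. The paper does not actually prove this proposition here; it cites the companion article \cite{dSPStructured1}, so there is no ``paper's own proof'' to compare against.

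There is, however, a genuine gap in your treatment of the $b$-alternating case. You claim ``the factor $\tfrac12$ works identically,'' but it does not: since $x\wedge_b x=0$, the adjustment $y_0\mapsto y_0+\tfrac{c}{2}x$ leaves $x\wedge_b y_0$ unchanged, so it cannot absorb a nonzero correction term $c\,x$. The fix is that in the alternating case no correction is needed: from $d\in\calA_b$ one has $b\bigl(w,d(w)\bigr)=0$, and since $d(w)=c\,x$ with $b(w,x)=\pm 1$ this forces $c=0$ outright. Thus $u=x\wedge_b y_0$ already, and your verification that $y_0\in\{x\}^\bot$ (via $b(y_0,x)=\widehat\ell(x)=\ell(x)=0$) finishes the job. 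You should replace the hand-wave about $\tfrac12$ with this observation.

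Two minor points. First, your final paragraph invokes nilpotency of $d$ and Lemma~\ref{stableortholemma}(b), but neither is used: the inclusion $d(w)\in\F x$ follows purely from $d\in\calS_b$ (resp.\ $\calA_b$) and $d_{|\{x\}^\bot}=0$, exactly as you already wrote two sentences earlier. Second, the nilpotency hypothesis in (i) is in fact redundant once $u(x)=0$ and $u(\{x\}^\bot)\subset\F x$: those two conditions alone force $u^3=0$.
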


Moreover, the following result is an easy consequence of the Trace Lemma:

\begin{prop}[Orthogonality Lemma for Tensors]\label{ortholemmatensors}
Let $x$ be a non-zero isotropic vector of $V$.
Let $\calV$ be a nilpotent subspace of $\calS_b$ (respectively, of $\calA_b$).
Let $v \in \calV$, and let $y \in \{x\}^\bot$ be such that $x \otimes_b y \in \calV$
(respectively, $x \wedge_b y \in \calV$). Then, for every odd positive integer $k<|\F|$, one has
$$b\bigl(y,v^k(x)\bigr)=0.$$
\end{prop}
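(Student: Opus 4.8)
The plan is to apply the Trace Lemma to a cleverly chosen product of operators. The key observation is that $x\otimes_b y$ (respectively $x\wedge_b y$) is, up to sign, a rank-one operator of the form $f\otimes x$ when restricted in the right way: since $b(x,x)=0$ and $y\in\{x\}^\bot$, the operator $x\otimes_b y$ vanishes at $x$ and sends $\{x\}^\bot$ into $\F x$. But more useful here is the identity expressing the trace of a composition. For any $v\in\End(V)$ and any odd $k$, I would compute $\tr\bigl((x\otimes_b y)\circ v^k\bigr)$ directly from the definition of $x\otimes_b y$. Writing $w:=x\otimes_b y$, we have $w(z)=b(y,z)\,x+b(x,z)\,y$ for all $z$, so $w\circ v^k$ is the map $z\mapsto b\bigl(y,v^k(z)\bigr)\,x+b\bigl(x,v^k(z)\bigr)\,y$. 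Its trace is therefore $b\bigl(y,v^k(x)\bigr)+b\bigl(x,v^k(y)\bigr)$, since the trace of $z\mapsto \ell(z)\,a$ is $\ell(a)$.

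Now the Trace Lemma (Lemma \ref{tracelemma}), applicable because $k<|\F|$ and $w,v\in\calV$ with $\calV$ nilpotent, gives $\tr\bigl(w\,v^k\bigr)=0$, hence
$$b\bigl(y,v^k(x)\bigr)+b\bigl(x,v^k(y)\bigr)=0.$$
The remaining task is to see that the two summands are in fact equal (so that each is zero, using $\car\F\ne 2$). This is where the $b$-symmetry of $v^k$ and the parity of $k$ enter. Since $v\in\calS_b$, every power $v^k$ lies in $\calS_b$, so $b\bigl(x,v^k(y)\bigr)=b\bigl(v^k(x),y\bigr)=b\bigl(y,v^k(x)\bigr)$ by symmetry of $b$; adding, $2\,b\bigl(y,v^k(x)\bigr)=0$, and we are done. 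In the alternating case ($b$ alternating, $v\in\calA_b$), I would instead use $w:=x\wedge_b y$, whose composition with $v^k$ has trace $b\bigl(y,v^k(x)\bigr)-b\bigl(x,v^k(y)\bigr)$; here $v^k\in\calA_b$ means the form $(a,c)\mapsto b(a,v^k(c))$ is alternating, giving $b\bigl(x,v^k(y)\bigr)=-b\bigl(v^k(x),y\bigr)=b\bigl(y,v^k(x)\bigr)$ (using that $b$ itself is alternating, so $b(v^k(x),y)=-b(y,v^k(x))$), and again $2\,b\bigl(y,v^k(x)\bigr)=0$.

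The only genuinely delicate point is bookkeeping the sign in each of the four combinations (symmetric/alternating $b$, and the compensating symmetry/alternation of $v^k$): one must verify that in both the $\calS_b$-case and the $\calA_b$-case the two trace contributions come out with the \emph{same} sign rather than opposite signs, and this is exactly where the hypothesis that $k$ be \emph{odd} is needed — it guarantees $v^k$ inherits the same type ($b$-symmetric or $b$-alternating) as $v$, whereas an even power would land in the opposite class and the argument would collapse. I expect this sign-tracking, together with correctly invoking that $\car\F\neq 2$ to cancel the factor $2$, to be the main (and only real) obstacle; everything else is a one-line trace computation followed by a direct appeal to Lemma \ref{tracelemma}. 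Note also that the hypothesis $x\otimes_b y\in\calV$ (resp.\ $x\wedge_b y\in\calV$) is what licenses using the Trace Lemma with this particular rank-$\le 2$ operator in place of a general element of $\calV$.
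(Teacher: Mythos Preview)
Your approach is exactly the paper's: compute the trace of $(x\otimes_b y)\circ v^k$ (resp.\ $(x\wedge_b y)\circ v^k$) as $b(y,v^k(x))\pm b(x,v^k(y))$, use that $v^k$ lies in the same class $\calS_b$ or $\calA_b$ as $v$ because $k$ is odd to identify this with $2\,b(y,v^k(x))$, and invoke the Trace Lemma.

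However, your sign-tracking in the worked examples contains errors that you should fix. First, ``since $v\in\calS_b$, every power $v^k$ lies in $\calS_b$'' is false when $b$ is alternating: then $v^2\in\calA_b$, and only odd powers remain in $\calS_b$ --- this is precisely why the parity hypothesis matters, so do not gloss over it. Second, in your $\calA_b$-example, the implication ``$v^k\in\calA_b$ gives $b(x,v^k(y))=-b(v^k(x),y)$'' is wrong: $v^k\in\calA_b$ says the form $(a,c)\mapsto b(a,v^k(c))$ is alternating, which gives $b(x,v^k(y))=-b(y,v^k(x))$ \emph{directly}, with no detour through the adjoint or through the symmetry type of $b$. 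Your detour actually lands on $b(x,v^k(y))=+b(y,v^k(x))$, which would make the trace vanish identically and the argument collapse. The clean route, which the paper takes, is to avoid adjoints altogether and use only the defining symmetry or skew-symmetry of the form $(a,c)\mapsto b(a,v^k(c))$: this gives $b(x,v^k(y))=b(y,v^k(x))$ in the $\calS_b$-case and $b(x,v^k(y))=-b(y,v^k(x))$ in the $\calA_b$-case, uniformly in the type of $b$, and in both cases the trace becomes $2\,b(y,v^k(x))$.
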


\begin{proof}
Let $k$ be an odd positive integer such that $|\F|>k$.
Remembering the notation for tensors from Section \ref{standardGinequality}, we note that, with $\varphi : z \mapsto b(x,z)$ and $\psi : z \mapsto b(y,z)$, we have
$$(x \otimes_b y) \circ v^k=(\psi \circ v^k) \otimes x+(\varphi \circ v^k) \otimes y$$
and
$$(x \wedge_b y) \circ v^k=(\psi \circ v^k) \otimes x-(\varphi \circ v^k) \otimes y$$
and hence, noting that $v^k$ belongs to $\calS_b$ (respectively, to $\calA_b$) because $k$ is odd,
$$\tr\bigl((x \otimes_b y) \circ v^k\bigr)=(\psi \circ v^k)(x)+(\varphi \circ v^k)(y)
=b\bigl(y,v^k(x)\bigr)+b\bigl(x,v^k(y)\bigr)=2\,b\bigl(y,v^k(x)\bigr)$$
(respectively,
$$\tr\bigl((x \wedge_b y) \circ v^k\bigr)=(\psi \circ v^k)(x)-(\varphi \circ v^k)(y)
=b\bigl(y,v^k(x)\bigr)-b\bigl(x,v^k(y)\bigr)=2\,b\bigl(y,v^k(x)\bigr)).$$
From there, the result follows from the Trace Lemma (Lemma \ref{tracelemma}).
\end{proof}


\subsection{The maximal nilindex in $\WS_{b,\calF}$ and $\WA_{b,\calF}$}\label{maxnilindexsection}

Here, given a non-degenerate symmetric or alternating bilinear form $b$ on $V$, with Witt index $\nu$, and
given a maximal partially complete $b$-singular flag $\calF$ of $V$, we investigate the maximal
possible nilindex for an element of $\WA_{b,\calF}$ or of $\WS_{b,\calF}$.
Those results can be retrieved from the (complicated) classification of pairs consisting of a symmetric bilinear form
and an alternating bilinear form under congruence (see e.g.\ \cite{Sergeichuk}), but we will give elementary proofs
for readers that are not well-versed in that theory.

\begin{lemma}\label{genericnilindexalt}
Let $b$ be a non-degenerate symmetric bilinear form on $V$, with Witt index $\nu$. Set $n:=\dim V$.
Let $\calF$ be a maximal partially complete $b$-singular flag of $V$.
If $n \neq 2\nu$, the generic nilindex of $\WA_{b,\calF}$ is $2\nu+1$.
If $n=2\nu$, the generic nilindex of $\WA_{b,\calF}$ is $n-1$.
\end{lemma}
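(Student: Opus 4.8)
The statement is an equality, so I would prove ``$\leq$'' and ``$\geq$'' separately, the former being essentially free. Every element of $\WA_{b,\calF}$ is a nilpotent element of $\calA_b$, so Corollary \ref{MaxRankNil} yields $\ind(u)\leq 2\nu+1$ for all $u\in\WA_{b,\calF}$, and moreover $\ind(u)\leq n-1$ whenever $n=2\nu$. Hence the generic nilindex of $\WA_{b,\calF}$ is at most $2\nu+1$ in general and at most $n-1$ when $n=2\nu$. (If $\nu=0$ then $b$ is non-isotropic, so $\WA_{b,\calF}=\{0\}$ by Lemma \ref{nonisotropicLemma} and the claimed value is $1$, which is correct; so from now on assume $\nu\geq 1$.) It remains to exhibit, in each case, an element of $\WA_{b,\calF}$ whose nilindex equals the stated value.

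To do this I would work in a Witt decomposition adapted to $\calF$: pick a basis $(e_1,\dots,e_\nu)$ of $F_\nu$ with $F_i=\Vect(e_1,\dots,e_i)$, a totally singular subspace $G=\Vect(f_1,\dots,f_\nu)$ with $b(e_i,f_j)=\delta_{i,j}$ and $b(f_i,f_j)=0$, and set $W:=(F_\nu\oplus G)^\bot$; then $V=F_\nu\oplus G\oplus W$, $\dim W=n-2\nu$, and $b$ restricts to a non-isotropic symmetric form on $W$ (here one uses that $F_\nu$ is maximal among the totally singular subspaces). Writing endomorphisms in block form along this decomposition, the conditions ``$u\in\calA_b$'' and ``$u(F_i)\subset F_{i-1}$ for all $i$'' translate, after a routine computation, into
$$u=\begin{pmatrix} A & B & C\\ 0 & -A^{T} & 0\\ 0 & N & 0\end{pmatrix},$$
where $A$ is strictly upper triangular, $B$ is skew-symmetric, $C$ is arbitrary, and $N$ is uniquely determined by $C$ (through the Gram matrix of $b|_W$); nilpotency is then automatic, and a dimension count recovers $\dim\WA_{b,\calF}=\nu(n-\nu-1)$.

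For the case $n>2\nu$ (so $W\neq\{0\}$): choose $w\in W$ with $b(w,w)\neq 0$ and let $u$ be the element of the above family with $B=0$ given by $e_1\mapsto 0$, $e_i\mapsto -e_{i-1}$ for $i\geq 2$, $f_j\mapsto f_{j+1}$ for $j<\nu$, $f_\nu\mapsto w$, and $v\mapsto -b(w,v)\,e_\nu$ for $v\in W$. Then $u^{j}(f_1)=f_{j+1}$ for $j<\nu$, $u^{\nu}(f_1)=w$, $u^{\nu+1}(f_1)=-b(w,w)\,e_\nu$, and more generally $u^{\nu+1+k}(f_1)$ is a nonzero multiple of $e_{\nu-k}$ for $0\leq k\leq\nu-1$; in particular $u^{2\nu}(f_1)\neq 0$, so $\ind(u)=2\nu+1$. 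For the case $n=2\nu$ (so $W=\{0\}$): take $A$ the shift $e_i\mapsto e_{i-1}$ (and $e_1\mapsto 0$) and $B$ the skew-symmetric matrix with $B_{\nu-1,\nu}=1$, $B_{\nu,\nu-1}=-1$ and all other entries zero. One computes $u(f_j)=-f_{j+1}$ for $j\leq\nu-2$, $u(f_{\nu-1})=-e_\nu-f_\nu$, $u(f_\nu)=e_{\nu-1}$, and $u(e_i)=e_{i-1}$ for $i\geq 2$; iterating gives $u^{\nu}(f_1)=\pm 2\,e_{\nu-1}$, then $u^{\nu+k}(f_1)=\pm 2\,e_{\nu-1-k}$ for $0\leq k\leq\nu-2$, so that $u^{2\nu-2}(f_1)=\pm 2\,e_1\neq 0$ (using $\car\F\neq 2$) while $u^{2\nu-1}=0$; hence $\ind(u)=2\nu-1=n-1$.

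The only genuinely delicate point is the ``corner'' in the case $n=2\nu$: because the $b$-alternating condition ties the $F_\nu$-block of $u$ to its $G$-block through the skew-symmetry of $B$, the Jordan chain starting at $f_1$ does not pass cleanly from $G$ into $F_\nu$ — a term in $F_\nu$ leaks out one step early at $f_{\nu-1}$ — and the chain reaches the full length $2\nu-1$ precisely because this leaked term and the ``main'' term reinforce into a factor $2$ instead of cancelling; this is where $\car\F\neq 2$ is indispensable. (Conversely, this same coupling is why $\ind\leq n-1$ there: in the block form, a Jordan block of $A$ of size $\nu$ makes $A^{\nu-1}$ have rank one with image $\F e_1$, whereas $Be_\nu$ has vanishing $e_\nu$-coordinate by skew-symmetry, which forces $u^{2\nu-1}=0$.)
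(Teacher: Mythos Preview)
Your proof is correct and follows essentially the same approach as the paper: you derive the upper bound from Corollary~\ref{MaxRankNil} and then exhibit, in a Witt basis adapted to $\calF$, the very same explicit elements (a shift on $F_\nu$ coupled through one anisotropic vector when $n>2\nu$, and a shift together with a rank-two skew-symmetric off-diagonal block when $n=2\nu$) that the paper constructs; the computations of the iterates of $f_1$ match the paper's up to harmless sign conventions.
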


\begin{proof}
By Corollary \ref{MaxRankNil}, it suffices to exhibit an element of $\WA_{b,\calF}$ with
nilindex at least $2\nu+1$ if $n \neq 2\nu$, and at least $n-1$ otherwise.

In any case, we write $\calF=(F_i)_{0 \leq i \leq \nu}$, and set $F:=F_\nu$ and $p:=n-2\nu$;
we take a Witt decomposition
$V=F \oplus G \oplus H$ in which $H$ is totally $b$-singular with dimension $\nu$ and $G=(F \oplus H)^\bot$.
We choose a basis $(f_1,\dots,f_\nu)$ of $F_\nu$ that is adapted to $\calF$ (so that $F_i=\Vect(f_1,\dots,f_i)$ for all $i \in \lcro 1,\nu\rcro$)
and then we obtain a basis $(h_1,\dots,h_\nu)$ of $H$ such that
$b(f_i,h_j)=\delta_{i,j}$ for all $i,j$ in $\lcro 1,\nu\rcro$. We choose an orthogonal basis $(g_1,\dots,g_p)$ of $G$.
The family $\bfB:=(e_1,\dots,e_\nu,g_1,\dots,g_p,h_1,\dots,h_\nu)$ is a basis of $V$ in which the matrix of $b$
reads
$$S:=\begin{bmatrix}
0_\nu & [0]_{\nu \times p} & I_\nu \\
[0]_{p \times \nu} & P & [0]_{p \times \nu} \\
I_\nu & [0]_{\nu \times p} & 0_\nu \\
\end{bmatrix}$$
where $P:=\bigl(b(g_i,g_j)\bigr)_{1 \leq i,j \leq p}$.
We denote by $J:=(\delta_{i+1,j})_{1\leq i,j \leq \nu} \in \Mat_\nu(\F)$ the nilpotent Jordan cell of size $\nu$.

Assume first that $n=2\nu$, and define $K=(k_{i,j}) \in \Mata_\nu(\F)$ by $k_{i,j}=0$ whenever $\{i,j\} \neq \{\nu-1,\nu\}$,
and $k_{\nu,\nu-1}=-k_{\nu-1,\nu}=1$. Setting
$$M:=\begin{bmatrix}
J & K \\
0_\nu & -J^T \\
\end{bmatrix} \in \Mat_n(\F),$$
we see that $SM$ is alternating and $M$ is nilpotent. Hence the endomorphism $u$ represented by $M$ in $\bfB$
is $b$-alternating and nilpotent. Judging from the first $\nu$ columns of $M$, this endomorphism stabilizes the flag $\calF$.
Finally, we see that $u^{\nu-1}(h_1)=(-1)^{\nu-1}(-e_\nu+h_\nu)$,
and then $u^\nu(h_1)=(-1)^{\nu} 2 e_{\nu-1}$, $u^{2\nu-2}(h_1)=(-1)^{\nu}2 e_1$,
whence $u^{2\nu-2} \neq 0$. It follows that $\ind(u) > 2\nu-2$, i.e.\ $\ind(u) \geq n-1$.

Assume now that $n>2\nu$. We define $C:=(c_{i,j}) \in \Mat_{p,\nu}(\F)$
by $c_{i,j}=0$ whenever $(i,j) \neq (p,\nu)$, and $c_{p,\nu} =1$.
We also set
$$N:=\begin{bmatrix}
J & -(PC)^T & 0_\nu \\
[0]_{p \times \nu} & 0_p & C \\
0_\nu & [0]_{\nu \times p} & -J^T \\
\end{bmatrix} \in \Mat_n(\F),$$
and we see that $SN$ is alternating and $N$ is nilpotent. Hence the endomorphism $u$ represented by $N$ in $\bfB$
is $b$-alternating and nilpotent. Once more, we see that $u$ stabilizes $\calF$.
This time around $u^{\nu}(h_1)=(-1)^{\nu-1} g_p$, then $u^{\nu+1}(h_1)=(-1)^\nu \,b(g_p,g_p)\,e_\nu$, and then
$u^{2\nu}(h_1)=(-1)^\nu \,b(g_p,g_p)\,e_1$, to the effect that $\ind(u)>2\nu$.
\end{proof}

Now, we turn to symmetric endomorphisms for a symplectic form:

\begin{lemma}\label{genericnilindexsym}
Let $b$ be a symplectic form on an $n$-dimensional vector space $V$. Set $n:=\dim V$.
Let $\calF$ be a maximal partially complete $b$-singular flag of $V$.

Then, the generic nilindex of $\WS_{b,\calF}$ equals $n$.
\end{lemma}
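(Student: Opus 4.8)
The plan is to prove the two inequalities $\ind(\WS_{b,\calF})\leq n$ and $\ind(\WS_{b,\calF})\geq n$. The first one is essentially trivial: any nilpotent endomorphism of the $n$-dimensional space $V$ has nilindex at most $n$ (if one prefers an intrinsic bound, Corollary \ref{MaxRankNil} gives $\rk u\leq \min(2\nu,n-1)=n-1$ here, since $2\nu=n$, whence $\ind(u)\leq n$ for every nilpotent $u\in\calS_b$). So everything reduces to producing a single element of $\WS_{b,\calF}$ with nilindex exactly $n$, i.e.\ a \emph{regular} nilpotent $b$-symmetric endomorphism that stabilizes $\calF$.

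To build such an element I would reuse the matrix construction employed in the proof of Lemma \ref{genericnilindexalt}. Write $\calF=(F_i)_{0\leq i\leq \nu}$ and $F:=F_\nu$, take a Witt decomposition $V=F\oplus H$ with $H$ totally $b$-singular of dimension $\nu$, pick a basis $(f_1,\dots,f_\nu)$ of $F$ adapted to $\calF$, and let $(h_1,\dots,h_\nu)$ be the basis of $H$ with $b(f_i,h_j)=\delta_{i,j}$ for all $i,j$ in $\lcro 1,\nu\rcro$. Then assemble the basis $\bfB:=(e_1,\dots,e_n)$ of $V$ by setting $e_i:=f_i$ for $i\in\lcro 1,\nu\rcro$ and $e_{n+1-i}:=(-1)^i h_i$ for $i\in\lcro 1,\nu\rcro$. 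Because $n$ is even, a routine check shows that the matrix of $b$ in $\bfB$ is the anti-diagonal skew-symmetric matrix $S$ with $S_{i,n+1-i}=(-1)^i$ for all $i$ and all other entries zero. Now let $u$ be the endomorphism whose matrix in $\bfB$ is the full-size nilpotent Jordan cell $N:=(\delta_{i+1,j})_{1\leq i,j\leq n}$. One computes that the $(i,j)$-entry of $SN$ equals $(-1)^i$ when $i+j=n+2$ and $0$ otherwise; since $i$ and $j$ share the same parity whenever $i+j$ is even, $SN$ is symmetric, so $u\in\calS_b$. Moreover $u$ is nilpotent with $\ind(u)=n$, and the identities $u(e_k)=e_{k-1}$ (with $u(e_1)=0$) give $u(F_i)\subset F_{i-1}$ for all $i\in\lcro 1,\nu\rcro$, so $u$ stabilizes $\calF$. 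Hence $u\in\WS_{b,\calF}$, which yields $\ind(\WS_{b,\calF})\geq n$ and finishes the proof.

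The only genuine subtlety -- and the step I expect to be the main obstacle -- is the sign pattern of $S$. In contrast with Lemma \ref{genericnilindexalt}, where the evenness of the rank of an alternating form obstructs a single Jordan cell of full size $n$, here no such obstruction exists and one is in fact realizing the principal (regular) nilpotent element of the symplectic Lie algebra. However, with the naive block pairing $b(f_i,h_j)=\delta_{i,j}$ one finds that $SN$ comes out \emph{skew-symmetric} rather than symmetric; one is therefore forced to twist the pairing between $F$ and $H$ by the alternating signs $(-1)^i$, as above, so that the parity bookkeeping on the anti-diagonal makes $SN$ symmetric. Getting this twist right is the crux of the argument; once it is in place, the nilindex and flag-stabilization claims are immediate.
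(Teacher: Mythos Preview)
Your argument is correct and follows the same strategy as the paper's proof: exhibit an explicit element of $\WS_{b,\calF}$ with nilindex $n$. The only cosmetic difference is the choice of basis. The paper keeps the standard hyperbolic basis, so that the matrix of $b$ is $\begin{bmatrix}0_\nu & I_\nu\\ -I_\nu & 0_\nu\end{bmatrix}$, and then takes the block matrix $M=\begin{bmatrix}J & D\\ 0_\nu & -J^T\end{bmatrix}$ with $D=E_{\nu,\nu}$; one checks that $AM$ is symmetric and that the Jordan chain $f_1\mapsto -f_2\mapsto\cdots\mapsto e_\nu\mapsto\cdots\mapsto e_1$ has length $n$. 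You instead absorb the signs into the basis via $e_{n+1-i}:=(-1)^i h_i$, so that the matrix of the very same kind of endomorphism becomes the single $n\times n$ Jordan cell $N$. Your sign bookkeeping is right: with $n$ even, $S_{i,n+1-i}=(-1)^i$ holds for all $i$, and since $i+j=n+2$ forces $i,j$ to share parity, $SN$ is indeed symmetric. Either presentation yields the same conclusion with the same amount of work; yours has the mild advantage that the nilindex is read off instantly from $N$, at the cost of the basis twist you correctly flagged as the only subtle point.
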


\begin{proof}
Set $\nu:=n/2$.
We choose a hyperbolic basis $\bfB=(e_1,\dots,e_\nu,f_1,\dots,f_\nu)$ of $V$
in which $(e_1,\dots,e_\nu)$ is adapted to $\calF$, so that
the matrix of $b$ in $\bfB$ reads
$$A:=\begin{bmatrix}
0_\nu & I_\nu \\
-I_\nu & 0_\nu
\end{bmatrix}.$$
Here, we consider once more the $\nu$ by $\nu$ nilpotent Jordan matrix $J$, and this time around we
consider the matrix $D \in \Mats_\nu(\F)$ with exactly one non-zero entry, located at the $(\nu,\nu)$ spot and which equals $1$.
One defines $u$ as the endomorphism of $V$ whose matrix in the basis $\bfB$ equals
$$M:=\begin{bmatrix}
J & D \\
0_\nu & -J^T
\end{bmatrix}.$$
Seeing that $AM$ is symmetric, we obtain that $u$ is $b$-symmetric. Once again, $u$ is nilpotent and stabilizes $\calF$, and this time around we see that
$u^{\nu}(f_1)=(-1)^{\nu-1} e_\nu$ and hence $u^{2\nu-1}(f_1)=(-1)^{\nu-1} e_1$, leading to $\ind(u) \geq 2\nu$.
Conversely, every nilpotent element of $\End(V)$ has nilindex at most $n=2\nu$.
\end{proof}

Combining the above results with Lemma \ref{flaglemma}, we find:

\begin{cor}\label{nilindexCor}
Let $b$ be a non-degenerate symmetric or alternating bilinear form on an $n$-dimensional vector space, with Witt index $\nu$.
\begin{enumerate}[(a)]
\item If $b$ is alternating then the generic nilindex of $\calS_b$ equals $n$.
\item If $b$ is symmetric and $n \neq 2\nu$, then the generic nilindex of $\calA_b$ equals $2\nu+1$.
\item If $b$ is symmetric and $n=2\nu$, then the generic nilindex of $\calA_b$ equals $n-1$.
\end{enumerate}
\end{cor}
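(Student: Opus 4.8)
The plan is a two-sided estimate (a ``sandwich''): I will bound the generic nilindex of $\calS_b$ (respectively, $\calA_b$) below by the generic nilindex of a suitable space of type $\WS_{b,\calF}$ (respectively, $\WA_{b,\calF}$), and above by means of Lemma~\ref{flaglemma} (or, alternatively, of Corollary~\ref{MaxRankNil}).

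For the lower bound, I would fix any maximal partially complete $b$-singular flag $\calF$ of $V$; such a flag exists because $b$ has a totally $b$-singular subspace of dimension $\nu$, in which one may take a complete flag. In case (a), $b$ is symplectic, so Lemma~\ref{genericnilindexsym} gives $\ind(\WS_{b,\calF})=n$; since $\WS_{b,\calF}\subset\calS_b$, this yields $\ind(\calS_b)\geq n$. In cases (b) and (c), $b$ is a non-degenerate symmetric form, so Lemma~\ref{genericnilindexalt} gives $\ind(\WA_{b,\calF})=2\nu+1$ when $n\neq 2\nu$ and $\ind(\WA_{b,\calF})=n-1$ when $n=2\nu$; since $\WA_{b,\calF}\subset\calA_b$, this yields the announced lower bound for $\ind(\calA_b)$.

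For the upper bound, I would choose a nilpotent $u$ in $\calS_b$ (respectively, in $\calA_b$) whose nilindex equals the generic nilindex of the ambient space. By Lemma~\ref{flaglemma}, some maximal partially complete $b$-singular flag $\calF'$ of $V$ is stable under $u$, which is to say $u\in\WS_{b,\calF'}$ (respectively, $u\in\WA_{b,\calF'}$). Hence $\ind(u)\leq\ind(\WS_{b,\calF'})$ (respectively, $\ind(u)\leq\ind(\WA_{b,\calF'})$), and by Lemma~\ref{genericnilindexsym} (respectively, Lemma~\ref{genericnilindexalt}) this equals $n$ (respectively, $2\nu+1$ or $n-1$ according to whether $n\neq 2\nu$ or $n=2\nu$). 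Combined with the lower bound, this gives equality in each of (a), (b), (c). One could instead obtain the upper bound in (b)--(c) directly from Corollary~\ref{MaxRankNil}, and in (a) from the obvious fact that a nilpotent endomorphism of an $n$-dimensional space has nilindex at most $n$; both routes are equally short.

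I expect no genuine obstacle here: all the substance resides in the three lemmas just established, and the only point requiring care is bookkeeping with the hypotheses, namely that $\calF$ (respectively, $\calF'$) be taken maximal and $b$-singular so that $\WS_{b,\calF}$ (respectively, $\WA_{b,\calF}$) is defined, and that the symplectic/symmetric dichotomy be matched correctly with case (a) versus cases (b)--(c).
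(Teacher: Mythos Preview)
Your proposal is correct and matches the paper's own argument: the paper simply states that the corollary follows by ``combining the above results with Lemma~\ref{flaglemma}'', which is exactly the two-sided estimate you spell out (lower bound from Lemmas~\ref{genericnilindexalt} and~\ref{genericnilindexsym} applied to a fixed flag, upper bound from Lemma~\ref{flaglemma} placing any nilpotent element into some $\WS_{b,\calF'}$ or $\WA_{b,\calF'}$). Your alternative route to the upper bound via Corollary~\ref{MaxRankNil} is equally valid and is in fact how those two lemmas themselves establish their upper bounds.
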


\subsection{On the indecomposable nilpotent elements in $\calS_b$ and $\calA_b$}

An endomorphism $u$ of a finite-dimensional vector space $U$ is called a (nilpotent) \textbf{Jordan
cell} of size $p$ if $\dim U=p$ and if there exists a basis $(e_1,\dots,e_p)$ of $U$ such that
$u(e_i)=e_{i-1}$ for all $i \in \lcro 2,p\rcro$, and $u(e_1)=0$. Such a basis is then called a \textbf{Jordan basis} of $u$.

Here, we assume that $b$ is non-degenerate.
Let $u$ belong to $\calS_b$ (respectively, to $\calA_b$). We say that $u$ is \textbf{indecomposable}
when $V \neq \{0\}$ and there is no decomposition of $V$ into a $b$-orthogonal direct sum $V=V_1 \overset{\bot}{\oplus} V_2$
in which $V_1$ and $V_2$ are non-zero linear subspaces that are stable under $u$.
Thanks to the classification theory for pairs of bilinear forms in which one form is symmetric and the other one
is alternating (see, e.g. \cite{Sergeichuk}), we have the following information on indecomposable nilpotent endomorphisms:

\begin{prop}\label{Jordansymalt}
Assume that $b$ is symmetric.
Let $u \in \calA_b$ be nilpotent and indecomposable.
Then, $u$ is either a Jordan cell of odd size or the direct sum of two Jordan cells of the same even size.
Moreover, the Witt index of $b$ equals $\bigl\lfloor \frac{n}{2}\bigr\rfloor$, where $n:=\dim V$.
\end{prop}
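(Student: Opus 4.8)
The plan is to read off the Jordan type of $u$ from the classification of pairs consisting of a non-degenerate symmetric form and an alternating one, and then to obtain the Witt index by an elementary computation with the powers of $u$.

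First I would record the basic identity: since $u \in \calA_b$ we have $b(u(x),y)=-b(x,u(y))$ for all $x,y \in V$, whence, by an immediate induction,
$$b\bigl(u^i(x),u^j(y)\bigr)=(-1)^i\,b\bigl(x,u^{i+j}(y)\bigr) \quad \text{for all } i,j \geq 0 \text{ and all } x,y \in V.$$
Two consequences matter. Viewing $V$ as an $\F[t]$-module with $t$ acting as $u$ and restricting $b$ to a cyclic submodule $\F[t]\,e$ with $e$ of nilindex $m$, the Gram matrix of $b$ in the basis $\bigl(e,u(e),\dots,u^{m-1}(e)\bigr)$ has $(i,j)$-entry depending only on $i+j$, and symmetry of $b$ forces that entry to vanish whenever $i+j$ is odd; this sign obstruction is what distinguishes even Jordan cells from odd ones. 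On the other hand, by Lemma \ref{stableortholemma}(a) the orthogonal of a $u$-stable subspace is again $u$-stable, so if $b$ restricts non-degenerately to a $u$-stable subspace $W$ then $V=W \overset{\bot}{\oplus} W^\bot$ is a decomposition into $u$-stable pieces; hence, when $u$ is indecomposable, $b$ is degenerate on every non-zero proper $u$-stable subspace.

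Next comes the structural assertion. Combining the previous remarks with the classification of pairs $(S,A)$ — $S$ symmetric non-degenerate and $A$ alternating — under congruence (equivalently, with the orthogonal decomposition of a skew-adjoint nilpotent operator relative to a symmetric form), the indecomposable $b$-alternating nilpotent endomorphisms are exactly: a single Jordan cell of odd size (on an odd isotypic part the induced form is symmetric, and rank-one pieces occur), or the direct sum of two Jordan cells of equal even size, hyperbolically paired (on an even isotypic part the induced form is alternating, hence of even rank, so a single even cell cannot occur and the cells must pair up). This gives the first half of the proposition; for what follows, only the Jordan type is needed, not the precise shape of the form on the two-even-cells indecomposable.

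Finally, the Witt index. Set $p:=\ind(u)$. By the displayed identity, $b\bigl(u^m(x),u^m(y)\bigr)=(-1)^m b\bigl(x,u^{2m}(y)\bigr)$ vanishes as soon as $2m \geq p$, so $\im u^{\lceil p/2\rceil}$ is a totally singular subspace of $V$, of dimension $\rk u^{\lceil p/2\rceil}$. If $u$ is a single Jordan cell of odd size, then $p=n$ is odd and $\rk u^{(n+1)/2}=n-\tfrac{n+1}{2}=\lfloor n/2\rfloor$; if $u$ is the sum of two equal Jordan cells of even size, then $p=n/2$, $n$ is a multiple of $4$, and $\rk u^{n/4}=2\bigl(\tfrac n2-\tfrac n4\bigr)=\tfrac n2=\lfloor n/2\rfloor$. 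In either case the Witt index $\nu$ satisfies $\nu \geq \lfloor n/2\rfloor$, and since a totally singular subspace is contained in its own orthogonal one always has $\nu \leq \lfloor n/2\rfloor$; hence $\nu=\lfloor n/2\rfloor$. The only non-elementary ingredient is the reduction to the two indecomposable types: the sign obstruction above explains \emph{why} even cells cannot stand alone, but proving that they pair up (and that no further indecomposable configuration arises) is precisely the content of the pairs-of-forms classification, which I would simply quote; everything after that is the rank count just given.
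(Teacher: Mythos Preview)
Your proposal is correct, but it takes a different route from the paper's own proof on the structural half of the proposition.

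The paper gives a fully elementary, self-contained argument: starting from a Jordan decomposition, it lets $q$ be the minimal cell size and splits into the two parity cases. When $q=2p+1$ it exhibits a vector $x$ with $b(x,u^{2p}(x))\neq 0$ (by showing that the failure of such an $x$ would force $\Ker u^{p+1}\subset \im u^{p+1}$), checks that the Gram matrix of $b$ in the cyclic basis $(u^{2p}(x),\dots,x)$ is anti-lower-triangular with non-zero anti-diagonal, and concludes by indecomposability that $V$ equals this cyclic piece. When $q=2p+2$ it runs the analogous argument with the alternating form $c(x,y):=b(x,u(y))$, producing \emph{two} vectors $x,y$ with $b(x,u^{q-1}(y))\neq 0$, and shows that $W_x\oplus W_y$ is $b$-regular; indecomposability again forces $V=W_x\oplus W_y$. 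So the paper never invokes Sergeichuk's classification; it rebuilds exactly the piece of it that is needed.

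You, by contrast, quote the classification of pairs $(S,A)$ for the structural statement and reserve the elementary work for the Witt-index computation. That computation is essentially the same as the paper's (your totally singular subspace $\im u^{\lceil p/2\rceil}$ is precisely the one the paper writes down in each case), so there is no disagreement there. What you gain is brevity; what you lose is self-containment, which matters here because the paper explicitly advertises an elementary proof and because the Sergeichuk classification is considerably heavier machinery than the statement being proved. Your parenthetical heuristics (``on an even isotypic part the induced form is alternating, hence of even rank'') are on the right track but, as you acknowledge, are not a proof; if you wanted to make your argument self-contained, the paper's anti-lower-triangular Gram-matrix trick is exactly the missing step.
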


We give an elementary proof of this result. In the prospect of that proof, remember that a square matrix $M=(m_{i,j})_{1 \leq i,j \leq n}$ is called \textbf{anti-lower-triangular} whenever $m_{i,j}=0$ for all
$(i,j)\in \lcro 1,n\rcro^2$ with $i+j<n+1$; it is called \textbf{anti-diagonal} whenever
$m_{i,j}=0$ for all $(i,j)\in \lcro 1,n\rcro^2$ with $i+j\neq n+1$. The anti-diagonal entries of $M$ are $m_{1,n}, m_{2,n-1},\dots,m_{n,1}$. An anti-lower-triangular matrix is invertible if and only if all its anti-diagonal entries are non-zero.

\begin{proof}
We consider a splitting $V=V_1 \oplus \cdots \oplus V_N$ in which every subspace $V_i$ is stable under $u$
and the resulting endomorphism of $V_i$ is a Jordan cell. We denote by $q$ the minimal dimension among the $V_i$'s, and we set
$U:=\Ker u^q$.

Assume first that $q$ is odd, and write it $q=2p+1$.
Assume that $b(x,u^{2p}(x))=0$ for every $x \in U$. Then, the quadratic form $x \mapsto b(u^p(x),u^p(x))$ vanishes
everywhere on $U$ and hence $u^p(U)$ -- which equals $\Ker u^{p+1}$ because of the definition of $q$ --
is totally $b$-singular. Yet its orthogonal complement under $b$ is $\im u^{p+1}$. In particular $\Ker u^{p+1} \subset \im u^{p+1}$,
which contradicts the existence of a Jordan cell of size $q$ for $u$.

This yields some $x \in U$ such that $b(x,u^{2p}(x))\neq 0$. Note that $u^{2p+1}(x)=0$.
Let $(k,l)\in \N^2$. Then, $b(u^k(x),u^l(x))=(-1)^k b(x,u^{k+l}(x))=0$ if $k+l>2p$, whereas
$b(u^k(x),u^l(x)) \neq 0$ if $k+l=2p$. It follows that the matrix $\bigl(b(u^{2p+1-k}(x),u^{2p+1-l}(x))\bigr)_{1 \leq k,l \leq 2p+1}$
is anti-lower-triangular with all its anti-diagonal entries non-zero.
Therefore, the subspace $W:=\Vect(u^k(x))_{0 \leq k \leq 2p}$ has dimension $2p+1$ and is $b$-regular,
and the endomorphism induced by $u$ on $W$ is a Jordan cell.
Since $W$ is $b$-regular, $V=W \oplus W^\bot$. As $u$ is indecomposable and $W^\bot$ is stable under $u$, we conclude that $W=V$ and that $u$ is a Jordan cell of odd size. Moreover, we see that $\Vect(u^k(x))_{p+1 \leq k \leq 2p}$ is totally $b$-singular with dimension $p=\frac{n-1}{2}=\bigl\lfloor \frac{n}{2}\bigr\rfloor$,
and hence the Witt index of $b$ equals $\bigl\lfloor \frac{n}{2}\bigr\rfloor$.

Now, we consider the case when $q$ is even, and we write it $q=2p+2$ for some integer $p \geq 0$.
This time around, we set $c : (x,y) \in V^2 \mapsto b(x,u(y))$, which is alternating.
If $c(x,u^{2p}(y))=0$ for all $(x,y)\in U^2$, then $u^p(U)=\Ker u^{p+2}$
would be totally singular for $c$; in other words $\Ker u^{p+2}$ would be $b$-orthogonal to $u(u^p(U))=\Ker u^{p+1}$.
Yet, the $b$-orthogonal complement of $\Ker u^{p+1}$ is $\im u^{p+1}$, and hence we would have $\Ker u^{p+2} \subset \im u^{p+1}$,
contradicting the existence of a Jordan cell of size $2p+2$ for $u$.
This yields two vectors $x$ and $y$ of $\Ker u^q$ such that
$b(x,u^{q-1}(y)) \neq 0$. Now, we set
$$W_x:=\Vect(u^k(x))_{0 \leq k \leq q-1} \quad \text{and} \quad W_y:=\Vect(u^k(y))_{0 \leq k \leq q-1.}$$
We claim that $W_x$ and $W_y$ have dimension $q$ and are linearly disjoint, and that $W_x \oplus W_y$ is $b$-regular.
To back this up, we consider the family
$$(e_1,\dots,e_{2q}):=\bigl(u^{q-1}(y),u^{q-1}(x),\dots,u(y),u(x),y,x\bigr).$$
For all non-negative integers $k,l$ such that $k+l>q-1$, we have $b(u^k(x),u^l(y))=(-1)^k b(x,u^{k+l}(y))=0$,
and likewise $b(u^k(x),u^l(x))=0=b(u^k(y),u^l(y))$. Since $c$ is alternating and $q-1$ is odd, $b(x,u^{q-1}(x))=0=b(y,u^{q-1}(y))$,
whence $b(u^k(x),u^l(x))=0=b(u^k(y),u^l(y))$ for all $(k,l)\in \lcro 0,q-1\rcro^2$ such that $k+l=q-1$. Finally,
$b(u^k(x),u^l(y))=(-1)^{k} b(x,u^{q-1}(y)) \neq 0$ for all non-negative integers $k,l$ such that $k+l=q-1$. It follows that the matrix
$\bigl(b(e_i,e_j)\bigr)_{1 \leq i,j \leq 2q}$ is anti-lower-triangular with all its anti-diagonal entries non-zero.
This shows that $e_1,\dots,e_{2q}$ are linearly independent and that their span is $b$-regular, as claimed.
Since $u$ is indecomposable, we deduce that $V=W_x\oplus W_y$, and hence $u$ is the direct sum of two Jordan cells of size $q$.
Here, we directly have $\Ker u^{p+1}=\im u^{p+1}$, and hence $\Ker u^{p+1}$ is totally $b$-singular with dimension $\frac{n}{2}=q$.
We conclude that the Witt index of $b$ equals $\frac{n}{2}\cdot$
\end{proof}

Here is the corresponding result for nilpotent symmetric endomorphisms for a symplectic form.
We will not need it, so we simply state it and leave the proof (which an easy adaptation of the previous one) to the reader:

\begin{prop}\label{Jordanaltsym}
Assume that $b$ is alternating.
Let $u \in \calS_b$ be nilpotent and indecomposable.
Then, $u$ is either a Jordan cell of even size or the direct sum of two Jordan cells of the same odd size.
\end{prop}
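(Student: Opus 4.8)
The strategy is to transpose the proof of Proposition~\ref{Jordansymalt} almost verbatim, with the two parities exchanged. The source of the swap is elementary: since $b$ is alternating and $u\in\calS_b$, the $b$-adjoint of $u$ is $-u$, so $(u^k)^\star=(-1)^k u^k$, and consequently $u^k$ lies in $\calS_b$ when $k$ is odd and in $\calA_b$ when $k$ is even. As in the symmetric case, I would fix a decomposition $V=V_1\oplus\cdots\oplus V_N$ into $u$-stable subspaces on which $u$ induces Jordan cells, let $q$ be the least of the dimensions $\dim V_i$, and set $U:=\Ker u^q$. Because $q$ is the minimal cell size, one checks cell by cell that $u^j(U)=\Ker u^{q-j}$ for all $j\in\lcro 0,q\rcro$, and that any inclusion of the form $\Ker u^a\subset\im u^b$ may be tested cell by cell. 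The goal is to prove that $u$ is a single Jordan cell (of even size $q$) if $q$ is even, and the direct sum of two Jordan cells (of odd size $q$) if $q$ is odd.

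Assume first that $q$ is odd. Then $u^{q-1}\in\calA_b$, so $(x,y)\mapsto b\bigl(x,u^{q-1}(y)\bigr)$ is alternating, and I claim it does not vanish identically on $U$: otherwise, shifting powers of $u$ across $b$ would give $b\bigl(u^{(q-1)/2}(x),u^{(q-1)/2}(y)\bigr)=0$ for all $x,y\in U$, hence, as $u^{(q-1)/2}(U)=\Ker u^{(q+1)/2}$, we would get $\Ker u^{(q+1)/2}\subset\bigl(\Ker u^{(q+1)/2}\bigr)^\bot=\im u^{(q+1)/2}$, which fails inside the cell of size $q$ for dimension reasons and therefore fails globally, contradicting the existence of a Jordan cell of size $q$ for $u$. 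So there are $x,y\in U$ with $b\bigl(x,u^{q-1}(y)\bigr)\neq 0$, and a short computation with the relation $b(u(a),c)=-b(a,u(c))$ shows that $x$ and $y$ both have $u$-depth exactly $q$. From here the computation is literally the one from Proposition~\ref{Jordansymalt}: the interleaved family $\bigl(u^{q-1}(y),u^{q-1}(x),\dots,u(y),u(x),y,x\bigr)$ has, for $b$, a Gram matrix that is anti-lower-triangular with all its anti-diagonal entries equal to $\pm b\bigl(x,u^{q-1}(y)\bigr)\neq 0$; the sole point where parity is genuinely used is the vanishing of the entries $b\bigl(u^k(y),u^l(y)\bigr)$ with $k+l=q-1$, which follows from $u^{q-1}\in\calA_b$. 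Hence that family spans a $b$-regular, $u$-stable subspace $W$ on which $u$ induces the direct sum of two Jordan cells of size $q$. Since $W$ is $b$-regular, $V=W\oplus W^\bot$ with $W^\bot$ stable under $u$ (Lemma~\ref{stableortholemma}), so indecomposability yields $V=W$.

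Assume now that $q$ is even. Then $u^{q-1}\in\calS_b$, so $(x,y)\mapsto b\bigl(x,u^{q-1}(y)\bigr)$ is symmetric and $x\mapsto b\bigl(x,u^{q-1}(x)\bigr)$ is a well-defined quadratic form (recall $\car\F\neq 2$), and I claim it is not identically zero on $U$: otherwise, shifting powers again, $b$ would vanish on $u^{q/2-1}(U)\times u^{q/2}(U)=\Ker u^{q/2+1}\times\Ker u^{q/2}$, forcing $\Ker u^{q/2}\subset\bigl(\Ker u^{q/2+1}\bigr)^\bot=\im u^{q/2+1}$, again impossible inside the cell of size $q$. Choosing $x\in U$ with $b\bigl(x,u^{q-1}(x)\bigr)\neq 0$ (so that $x$ has $u$-depth $q$), the family $\bigl(u^{q-1}(x),\dots,u(x),x\bigr)$ spans a $q$-dimensional $u$-stable subspace $W$ whose Gram matrix for $b$ is anti-lower-triangular with anti-diagonal entries $\pm b\bigl(x,u^{q-1}(x)\bigr)\neq 0$; here no parity input is needed, because every sub-anti-diagonal entry has the form $\pm b\bigl(x,u^m(x)\bigr)$ with $m\geq q$, hence vanishes. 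So $W$ is $b$-regular and $u$ induces a single Jordan cell on $W$, and indecomposability gives $V=W$.

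The only delicate point is the bookkeeping of parities and $u$-depths, together with pinpointing the one spot in the odd-$q$ case where membership in $\calA_b$ is actually needed. The identities $u^j(\Ker u^q)=\Ker u^{q-j}$ and the cell-by-cell test for inclusions $\Ker u^a\subset\im u^b$ are the same routine Jordan-form facts already used for Proposition~\ref{Jordansymalt}, and the small cases $q\in\{1,2\}$ require no separate treatment; I do not anticipate any genuine obstacle beyond a careful transcription.
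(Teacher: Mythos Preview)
Your proposal is correct and is precisely the ``easy adaptation of the previous one'' that the paper leaves to the reader in lieu of a proof; the swap of parities you identify (driven by $u^{q-1}\in\calA_b$ when $q$ is odd and $u^{q-1}\in\calS_b$ when $q$ is even) is exactly the point. One terminological slip: in the even-$q$ paragraph, the entries you need to vanish for anti-lower-triangularity are those \emph{above} the anti-diagonal (i.e.\ with $i+j<q+1$), and it is those that have the form $\pm b(x,u^m(x))$ with $m\geq q$; your phrase ``sub-anti-diagonal'' points to the wrong side, though the argument you give is the right one.
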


We finish with a very simple result:

\begin{lemma}\label{JordancellLemma}
Let $u \in \calS_b \cup \calA_b$ be a Jordan cell, and $(e_1,\dots,e_n)$ be a Jordan basis of it, with $n$ odd.
Write $n=2p+1$. Then, the radical of the induced bilinear form on $\Vect(e_1,\dots,e_{p+1})$ is $\Vect(e_1,\dots,e_p)$.
\end{lemma}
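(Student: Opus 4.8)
The plan is to compute the Gram matrix of the induced bilinear form in the Jordan basis $(e_1,\dots,e_n)$ and read the radical off it. Since $u$ is a Jordan cell with this Jordan basis, one has $\Ker u^k=\Vect(e_1,\dots,e_k)$ and $\im u^k=\Vect(e_1,\dots,e_{n-k})$ for every $k \in \lcro 0,n\rcro$. On the other hand each power $u^k$ belongs to $\calS_b \cup \calA_b$, so Lemma~\ref{stableortholemma}(b), applied to the $u^k$, gives $\im u^k=(\Ker u^k)^\bot$, that is, $\Vect(e_1,\dots,e_{n-k})=\Vect(e_1,\dots,e_k)^\bot$. Letting $k$ run over $\lcro 0,n\rcro$, this says precisely that $b(e_i,e_j)=0$ whenever $i+j \leq n$; in other words, the matrix $G:=\bigl(b(e_i,e_j)\bigr)_{1 \leq i,j \leq n}$ is anti-lower-triangular.

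Next I would invoke the non-degeneracy of $b$: it makes $G$ invertible, and an anti-lower-triangular matrix can be invertible only if all of its anti-diagonal entries are non-zero. Writing $n=2p+1$, the pair $(p+1,p+1)$ satisfies $(p+1)+(p+1)=n+1$, so $b(e_{p+1},e_{p+1})$ is an anti-diagonal entry of $G$ and is therefore non-zero.

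Finally, I would restrict attention to $W:=\Vect(e_1,\dots,e_{p+1})$. For $i,j \in \lcro 1,p+1\rcro$ one has $i+j \leq 2p+2=n+1$, with equality only when $(i,j)=(p+1,p+1)$; hence the induced form on $W$ has $b(e_i,e_j)=0$ except that $b(e_{p+1},e_{p+1}) \neq 0$. Consequently a vector $\sum_{i=1}^{p+1}a_ie_i$ of $W$ is $b$-orthogonal to all of $W$ if and only if $a_{p+1}\,b(e_{p+1},e_{p+1})=0$, i.e.\ if and only if $a_{p+1}=0$, which means the radical equals $\Vect(e_1,\dots,e_p)$, as claimed. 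I do not expect a genuine obstacle here: the only point to watch is uniformity over the two cases $u \in \calS_b$ and $u \in \calA_b$, and the argument uses only $\im u^k=(\Ker u^k)^\bot$ together with the non-degeneracy of $b$, both of which hold in either case; alternatively one could obtain $b(e_i,e_j)=0$ for $i+j\leq n$ from the fact that $u$ is (anti-)self-adjoint for $b$ and that $u^n=0$, but going through kernels and images is shorter.
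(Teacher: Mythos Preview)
Your argument is correct. Both your proof and the paper's rest on the same relation $\im u^k=(\Ker u^k)^\bot$, but you deploy it more exhaustively than necessary: you run $k$ over all of $\lcro 0,n\rcro$ to obtain the full anti-lower-triangular shape of the Gram matrix, then invoke non-degeneracy to get $b(e_{p+1},e_{p+1})\neq 0$, and finally read off the radical entry by entry. The paper instead applies the relation just once, at $k=p$: since $W=\Vect(e_1,\dots,e_{p+1})=\im u^p$, the radical of the induced form on $W$ is
\[
W\cap W^\bot=\im u^p\cap(\im u^p)^\bot=\im u^p\cap\Ker u^p=\Vect(e_1,\dots,e_p),
\]
and that is the whole proof. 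Your route has the minor advantage of making the non-isotropy of $e_{p+1}$ explicit (which is how the lemma is actually used later), whereas in the paper's version that fact must be extracted afterwards; but the paper's argument is noticeably shorter and avoids the detour through invertibility of anti-lower-triangular matrices.
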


\begin{proof}
We have $\Ker u^p=\Vect(e_1,\dots,e_p)=\im u^{p+1}$ and $\im u^p=\Vect(e_1,\dots,e_{p+1})$, whence the said radical equals
$$\im u^p \cap (\im u^p)^\bot=\im u^p \cap \Ker u^p=\Vect(e_1,\dots,e_p).$$
\end{proof}

\section{General results on the structured Gerstenhaber problem}\label{groundworksection2}

In this section, we recall some elements of the study of the structured Gerstenhaber problem from \cite{dSPStructured1}, and
we consider the problem of extending scalars.

There are two items we have to review.
First (Section \ref{proofreview}), we need to go back to the inductive proof of Theorem \ref{majoTheo}, and for two reasons:
\begin{itemize}
\item It will explain how Theorems \ref{alttheo} and \ref{symtheo} can be proved by induction.
\item An important partial consequence on the structure of spaces of maximal
dimension, called the Strong Orthogonality Lemma (Lemma \ref{strongorthoLemma}), can be drawn from this proof.
\end{itemize}
In Section \ref{stabcubereview}, we recall a stability property for spaces with maximal dimension that was mentioned in the
introduction, and we draw an important consequence of it.

Finally, in Section \ref{extensionscalarsSection}, we explain how, in Theorems \ref{alttheo} and \ref{symtheo},
one can reduce the situation to the one where the underlying field is infinite.

\subsection{A review of the proof of Theorem \ref{majoTheo}, and some consequences on spaces with the maximal dimension}\label{proofreview}

Here, we recall the inductive proof of Theorem \ref{majoTheo} (see section 3 of \cite{dSPStructured1}).

Let $\calV$ be a nilpotent subspace of $\calS_b$ (respectively, of $\calA_b$).
If the Witt index $\nu$ of $b$ is zero, then Lemma \ref{nonisotropicLemma} shows that $\calV=\{0\}$.
Assume now that $\nu>0$ and let $x \in V$ be an arbitrary non-zero isotropic vector.

We consider the kernel
$$\calU_{\calV,x}:=\{u \in \calV : u(x)=0\}$$
of the surjective linear mapping
$$u \in \calV \mapsto u(x) \in \calV x.$$
Any $u \in \calU_{\calV,x}$ stabilizes $\{x\}^\bot$ (because it stabilizes $\F x$)
and hence induces a nilpotent endomorphism $\overline{u}$ of the quotient space $\{x\}^\bot/\F x$.
Note that $b$ induces a non-degenerate symmetric or alternating bilinear form $\overline{b}$ on $\{x\}^\bot/\F x$
with Witt index $\nu-1$. The set
$$\calV \modu x:=\{\overline{u} \mid u \in \calU_{\calV,x}\}$$
is then a nilpotent linear subspace of $\calS_{\overline{b}}$ (respectively, of $\calA_{\overline{b}}$).
Finally, the kernel of the linear mapping
$$\varphi : u \in \calU_{\calV,x} \mapsto \overline{u} \in \calV \modu x$$
consists of the operators $u \in \calV$ that vanish at $x$ and map $\{x\}^\bot$ into $\F x$.
Set
$$L_{\calV,x}:=\bigl\{y \in V : \; x \otimes_b y \in \calV\bigr\}$$
(respectively,
$$L_{\calV,x}:=\bigl\{y \in V : \; x \wedge_b y \in \calV\bigr\}).$$
By Proposition \ref{caractensors}, $L_{\calV,x}$ is a linear subspace of $\{x\}^\bot$
(respectively, a linear subspace of $\{x\}^\bot$ that contains $x$),
the kernel of $\varphi$ reads $x \otimes_b L_{\calV,x}$ (respectively,
$x \wedge_b L_{\calV,x}$), and the dimension of that kernel
equals $\dim L_{\calV,x}$ (respectively, $\dim L_{\calV,x}-1$).
Hence, applying the rank theorem twice yields
$$\dim \calV=\dim (\calV x)+\dim L_{\calV,x}+\dim (\calV \modu x)$$
(respectively,
$$\dim \calV=\dim (\calV x)+\dim L_{\calV,x}-1+\dim (\calV \modu x)).$$
In any case, the Orthogonality Lemma for Tensors (Proposition \ref{ortholemmatensors})
yields that $\F x \oplus \calV x$ is $b$-orthogonal to $L_{\calV,x}$.
Hence,
$$\dim (\calV x)+\dim (L_{\calV,x}) \leq n-1,$$
and it follows that
$$\dim \calV\leq \dim (\calV \modu x)+n-1$$
(respectively,
$$\dim \calV\leq \dim (\calV \modu x)+n-2).$$
If we do not take Theorem \ref{majoTheo} for granted, this allows one to prove it by induction on the dimension of $V$:
indeed, by induction $\dim (\calV \modu x) \leq (\nu-1)\bigl(n-2-(\nu-1)\bigr)$
(respectively, $\dim (\calV \modu x) \leq (\nu-1)\bigl(n-2-(\nu-1)-1\bigr)$), and it follows that
$\dim \calV \leq \nu(n-\nu)$ (respectively, $\dim \calV \leq \nu(n-\nu-1)$).

Now, assume that
$\dim \calV=\nu(n-\nu)$ (respectively, $\dim \calV=\nu(n-\nu-1)$).
Then, the above series of inequalities yields the two equalities
$$\dim (\F x \oplus \calV x)+\dim L_{\calV,x}=n$$
and
$$\dim (\calV \modu x)=(\nu-1)\bigl(n-2-(\nu-1)\bigr)$$
(respectively,
$$\dim (\calV \modu x)=(\nu-1)\bigl(n-2-(\nu-1)-1\bigr)).$$
Since $\F x \oplus \calV x$ is $b$-orthogonal to $L_{\calV,x}$, those two equalities yield
the following important results:

\begin{lemma}[Strong Orthogonality Lemma]\label{strongorthoLemma}
Let $x \in V$ be a non-zero $b$-isotropic vector, and let $\calV$
be a nilpotent subspace of $\calS_b$ (respectively, of $\calA_b$) with dimension
$\nu(n-\nu)$ (respectively, $\nu(n-\nu-1)$).

Then, $\F x \oplus \calV x$ is the $b$-orthogonal complement of $L_{\calV,x}$.
\end{lemma}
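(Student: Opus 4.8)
The plan is to extract the Strong Orthogonality Lemma from the dimension count that already underlies the inductive proof of Theorem~\ref{majoTheo}: one re-runs that count and checks that, in the extremal case, each of the two inequalities it uses is forced to be an equality. I treat the two ``respectively'' cases in parallel, the only difference being a systematic shift by $1$ (in the dimension of a certain kernel, matched by the shift from $\nu(n-\nu)$ to $\nu(n-\nu-1)$).

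First I would set up the two exact sequences already in play. With $\calU_{\calV,x}=\{u\in\calV:u(x)=0\}$, the surjection $u\mapsto u(x)$ onto $\calV x$ has kernel $\calU_{\calV,x}$; and $\varphi\colon u\in\calU_{\calV,x}\mapsto\overline u\in\calV\modu x$ is onto the nilpotent subspace $\calV\modu x$ of $\calS_{\overline b}$ (resp.\ $\calA_{\overline b}$), with kernel $x\otimes_b L_{\calV,x}$ (resp.\ $x\wedge_b L_{\calV,x}$), whose dimension is $\dim L_{\calV,x}$ (resp.\ $\dim L_{\calV,x}-1$) by Proposition~\ref{caractensors}. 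Two applications of the rank theorem then give $\dim\calV=\dim(\calV x)+\dim L_{\calV,x}+\dim(\calV\modu x)$ (resp.\ the same identity with $-1$ inserted). Next, the Orthogonality Lemma for Tensors (Proposition~\ref{ortholemmatensors}, used with $k=1$) shows that $\F x\oplus\calV x$ is $b$-orthogonal to $L_{\calV,x}$; recalling that $\calV x$ is linearly disjoint from $\F x$, non-degeneracy of $b$ forces $1+\dim(\calV x)+\dim L_{\calV,x}\le n$.

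Finally I would invoke the induction hypothesis: $\overline b$ is a non-degenerate symmetric or alternating form on the $(n-2)$-dimensional space $\{x\}^\bot/\F x$ with Witt index $\nu-1$, so Theorem~\ref{majoTheo} (valid in smaller dimension) bounds $\dim(\calV\modu x)$ by $(\nu-1)\bigl(n-2-(\nu-1)\bigr)$ (resp.\ by $(\nu-1)\bigl(n-2-(\nu-1)-1\bigr)$). Substituting both bounds into the rank-theorem identity gives $\dim\calV\le\nu(n-\nu)$ (resp.\ $\le\nu(n-\nu-1)$), so when this bound is attained both inequalities must be equalities; in particular $\dim(\F x\oplus\calV x)+\dim L_{\calV,x}=n$. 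Since $\F x\oplus\calV x\subset(L_{\calV,x})^{\bot}$ and, by non-degeneracy, $\dim(L_{\calV,x})^{\bot}=n-\dim L_{\calV,x}=\dim(\F x\oplus\calV x)$, the inclusion is an equality, which is the assertion. There is no genuine obstacle here: all the analytic content (the rank-theorem decomposition, the identification of $\ker\varphi$ via Proposition~\ref{caractensors}, and the trace computation behind Proposition~\ref{ortholemmatensors}) is already available; the only points to watch are keeping the two ``respectively'' tracks aligned and noting that the quotient space genuinely has smaller dimension $n-2$, so that the induction is legitimate.
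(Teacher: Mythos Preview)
Your proposal is correct and follows essentially the same approach as the paper: both arguments re-run the inductive dimension count from the proof of Theorem~\ref{majoTheo} (rank theorem applied twice, identification of $\ker\varphi$ via Proposition~\ref{caractensors}, orthogonality from Proposition~\ref{ortholemmatensors}, and the inductive bound on $\dim(\calV\modu x)$), then observe that attaining the maximal dimension forces both inequalities to be equalities, whence $\dim(\F x\oplus\calV x)+\dim L_{\calV,x}=n$ and the inclusion $\F x\oplus\calV x\subset(L_{\calV,x})^{\bot}$ becomes an equality.
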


\begin{lemma}[Preservation of Maximality Lemma]\label{preservemaxLemma}
Let $x \in V$ be a non-zero $b$-isotropic vector, and let $\calV$
be a nilpotent subspace of $\calS_b$ (respectively, of $\calA_b$) with dimension
$\nu(n-\nu)$ (respectively, $\nu(n-\nu-1)$). Denote by $\overline{b}$
the bilinear form on $\{x\}^\bot/\F x$ induced by $b$.

Then, $\calV \modu x$ is a nilpotent subspace of $\calS_{\overline{b}}$ (respectively,
of $\calA_{\overline{b}}$) with the maximal dimension among such subspaces.
\end{lemma}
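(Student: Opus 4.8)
The plan is to obtain this statement as an immediate by-product of the equality analysis carried out just above, combined with Theorem \ref{majoTheo} applied to the induced form $\overline b$.

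First I would set up the ambient data for $\overline b$: it lives on $\{x\}^\bot/\F x$, which has dimension $n-2$, it is non-degenerate and still symmetric (resp. alternating), and its Witt index equals $\nu-1$. The last point follows from a Witt decomposition: pick a hyperbolic partner $y$ of $x$, so that $V=\Vect(x,y)\overset{\bot}{\oplus}V'$ with $V'$ of Witt index $\nu-1$; since $\{x\}^\bot=\F x\oplus V'$, the quotient $\{x\}^\bot/\F x$ is isometric to $V'$. Moreover, as already recorded in the proof review preceding the lemma, $\calV\modu x$ is a \emph{nilpotent} linear subspace of $\calS_{\overline b}$ (resp.\ of $\calA_{\overline b}$).

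Next I would invoke Theorem \ref{majoTheo} for $\overline b$: any nilpotent subspace of $\calS_{\overline b}$ (resp.\ $\calA_{\overline b}$) has dimension at most $(\nu-1)\bigl((n-2)-(\nu-1)\bigr)$ (resp.\ $(\nu-1)\bigl((n-2)-(\nu-1)-1\bigr)$). To conclude, I only need that $\calV\modu x$ reaches this bound, and this is precisely the second of the two displayed equalities stated just before the lemma, which was itself derived from $\dim\calV=\nu(n-\nu)$ (resp.\ $\nu(n-\nu-1)$) together with the equality $\dim(\F x\oplus\calV x)+\dim L_{\calV,x}=n$ coming from the Strong Orthogonality Lemma. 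The only step requiring a moment's care is the purely arithmetic check that $(\nu-1)\bigl(n-2-(\nu-1)\bigr)$ (resp.\ $(\nu-1)\bigl(n-2-(\nu-1)-1\bigr)$) is indeed the Gerstenhaber bound of Theorem \ref{majoTheo} in ambient dimension $n-2$ with Witt index $\nu-1$; this is a one-line substitution, so there is no genuine obstacle in the proof.
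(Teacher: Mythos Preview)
Your proposal is correct and follows essentially the same route as the paper: both extract the lemma directly from the equality case of the inductive chain of inequalities in Section~\ref{proofreview}, combined with Theorem~\ref{majoTheo} applied to $\overline b$ on the $(n-2)$-dimensional space with Witt index $\nu-1$. One small remark: you attribute the equality $\dim(\F x\oplus\calV x)+\dim L_{\calV,x}=n$ to the Strong Orthogonality Lemma, but in the paper's logic that equality and the one you need for $\dim(\calV\modu x)$ are obtained \emph{simultaneously} from the saturation of the inequality chain, and the two lemmas are then read off in parallel---so there is no dependence of one lemma on the other, only a shared source.
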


Related to the above method is the following basic result:

\begin{lemma}[Lifting Lemma]\label{liftinglemma}
Let $x \in V$ be a non-zero $b$-isotropic vector, and let $\calV$
be a nilpotent subspace of $\calS_b$ (respectively, of $\calA_b$) with dimension
$\nu(n-\nu)$ (respectively, $\nu(n-\nu-1)$).
Denote by $\overline{b}$
the bilinear form on $\{x\}^\bot/\F x$ induced by $b$.

Assume that $\calV x=\{0\}$ and that $\calV \modu x$ equals $\WS_{\overline{b},\calG}$ (respectively, $\WA_{\overline{b},\calG}$) for some
maximal partially complete $\overline{b}$-singular flag $\calG$ of $\{x\}^\bot/\F x$.

Then, $\calV=\WS_{b,\calF}$ (respectively, $\calV=\WA_{b,\calF}$) for some
maximal partially complete $b$-singular flag $\calF$ of $V$.
\end{lemma}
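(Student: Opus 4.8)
The plan is to construct the flag $\calF$ of $V$ by pulling back the flag $\calG$ along the canonical surjection $\{x\}^\bot \to \{x\}^\bot/\F x$, to check that the resulting flag is a maximal partially complete $b$-singular flag, and then to verify that $\calV$ is contained in $\WS_{b,\calF}$ (respectively $\WA_{b,\calF}$). Since by hypothesis $\dim\calV=\nu(n-\nu)$ (respectively $\nu(n-\nu-1)$), which equals $\dim\WS_{b,\calF}$ (respectively $\dim\WA_{b,\calF}$) as recalled in the introduction, that inclusion will be enough to conclude that $\calV=\WS_{b,\calF}$ (respectively $\calV=\WA_{b,\calF}$).

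First I would fix notation. Because $\calV x=\{0\}$, every $u\in\calV$ kills $x$, hence stabilizes $\F x$ and, by Lemma \ref{stableortholemma}, also stabilizes $(\F x)^\bot=\{x\}^\bot$; thus $u$ induces an endomorphism $\overline u$ of $W:=\{x\}^\bot/\F x$, and the hypothesis on $\calV\modu x$ reads $\{\overline u\mid u\in\calV\}=\WS_{\overline b,\calG}$ (respectively $\WA_{\overline b,\calG}$). Write $\calG=(G_0,\dots,G_{\nu-1})$, recalling that the Witt index of $\overline b$ is $\nu-1$; let $\pi:\{x\}^\bot\to W$ be the canonical surjection; set $F_0:=\{0\}$ and $F_i:=\pi^{-1}(G_{i-1})$ for $i\in\lcro 1,\nu\rcro$; and put $\calF:=(F_0,\dots,F_\nu)$.

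Next I would check that $\calF$ is a maximal partially complete $b$-singular flag of $V$. Each $F_i$ is a linear subspace of $\{x\}^\bot\subset V$ with $\dim F_i=\dim G_{i-1}+1=i$ for $i\in\lcro 1,\nu\rcro$, so $\calF$ is partially complete (and in passing $F_1=\pi^{-1}(\{0\})=\F x$). Since $x$ is isotropic, $b$ induces $\overline b$ on $W$, meaning $b(v,v')=\overline b(\pi(v),\pi(v'))$ for all $v,v'\in\{x\}^\bot$; as $G_{\nu-1}$ is totally $\overline b$-singular, it follows at once that $F_\nu=\pi^{-1}(G_{\nu-1})$ is totally $b$-singular. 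Finally $\calF$ has length $\nu$, the Witt index of $b$, so it is maximal.

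It remains to prove that every $u\in\calV$ satisfies $u(F_i)\subset F_{i-1}$ for all $i\in\lcro 1,\nu\rcro$. For $i=1$ this is clear since $u(F_1)=u(\F x)=\{0\}=F_0$. For $i\in\lcro 2,\nu\rcro$ and $w\in F_i=\pi^{-1}(G_{i-1})$, we have $u(w)\in\{x\}^\bot$ and $\pi(u(w))=\overline u(\pi(w))$; since $\overline u\in\WS_{\overline b,\calG}$ (respectively $\WA_{\overline b,\calG}$) stabilizes $\calG$ and $i-1\geq 1$, we get $\overline u(G_{i-1})\subset G_{i-2}$, hence $\pi(u(w))\in G_{i-2}$ and $u(w)\in\pi^{-1}(G_{i-2})=F_{i-1}$. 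Thus $\calV\subset\WS_{b,\calF}$ (respectively $\calV\subset\WA_{b,\calF}$), and the dimension equality forces equality. The argument involves no genuine difficulty: the only points requiring attention are the index bookkeeping in the pullback and the routine check that the induced form is compatible with $\pi$; all the substance of the lemma has been pushed into its two hypotheses, which were obtained by the harder work of the preceding sections.
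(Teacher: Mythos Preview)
Your proof is correct and follows essentially the same approach as the paper: pull back $\calG$ along the canonical projection $\{x\}^\bot\to\{x\}^\bot/\F x$, verify that the resulting flag $\calF$ is maximal partially complete $b$-singular, observe that every $u\in\calV$ satisfies $u(F_i)\subset F_{i-1}$, and conclude by the dimension equality. Your write-up is simply more explicit about the routine verifications (dimensions, total singularity, the index shift) that the paper leaves to the reader.
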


\begin{proof}
Write $\calG=(G_i)_{0 \leq i \leq \nu-1}$, set $F_0:=\{0\}$ and, for
$k \in \lcro 1,\nu\rcro$, denote by $F_k$ the inverse image of $G_{k-1}$ under the
canonical projection of $\{x\}^\bot$ onto $\{x\}^\bot/\F x$. Then,
one sees that $(F_0,\dots,F_\nu)$ is a maximal partially complete $b$-singular flag of $V$, and
every element of $\calV$ stabilizes it (here $\calU_{\calV,x}=\calV$ since we have assumed that $\calV x=\{0\}$).
Hence, $\calV \subset \WS_{b,\calF}$ (respectively $\calV \subset \WA_{b,\calF}$)
and since the dimensions of those spaces are equal we conclude that $\calV=\WS_{b,\calF}$
(respectively, $\calV=\WA_{b,\calF}$).
\end{proof}

\subsection{Stability properties of spaces with the maximal dimension}\label{stabcubereview}

In the course of \cite{dSPStructured1}, we have obtained an additional structural result
on nilpotent subspaces of $\calS_b$ or $\calA_b$ with the maximal possible dimension:

\begin{lemma}\label{cubeslemma}
Let $b$ be a non-degenerate symmetric or alternating bilinear form on $V$, with Witt index $\nu$.
Let $\calV$ be a nilpotent linear subspace of $\calS_b$ (respectively, of $\calA_b$)
with dimension $\nu(n-\nu)$ (respectively, $\nu(n-\nu-1)$).
Assume furthermore that $|\F| >3$.
Then, $u^3 \in \calV$ for all $u \in \calV$.
\end{lemma}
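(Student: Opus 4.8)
The plan is to argue by induction on $n=\dim V$; the case $\nu=0$ is trivial, since then $\calV=\{0\}$ by Lemma~\ref{nonisotropicLemma}. So assume $\nu\geq 1$, fix $u\in\calV$, and let us prove that $u^3\in\calV$. Because $u^3$ is an odd power of $u$, a routine computation (using $\car\F\neq 2$) shows that $u^3$ belongs to $\calS_b$ (respectively, to $\calA_b$), so that the question makes sense; moreover, since $3<|\F|$, the Trace Lemma (Lemma~\ref{tracelemma}) gives $\tr(u^3v)=0$ for all $v\in\calV$.

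By Lemma~\ref{SETIlemma} applied to $u$, there is a totally $b$-singular subspace $F$ of $V$ of dimension $\nu$ that is stable under $u$; as $\nu\geq 1$ and the restriction of $u$ to $F$ is nilpotent, we may pick a non-zero $x\in F$ with $u(x)=0$, and this $x$ is $b$-isotropic. Consider the reduced space $\calV\modu x$, a nilpotent subspace of $\calS_{\overline b}$ (respectively, $\calA_{\overline b}$) on the $(n-2)$-dimensional space $\{x\}^\bot/\F x$. By the Preservation of Maximality Lemma (Lemma~\ref{preservemaxLemma}) it has the maximal possible dimension, and since $|\F|$ has not changed the induction hypothesis tells us that $\calV\modu x$ is stable under cubes. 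As $u(x)=0$, the endomorphism $u^3$ stabilizes $\F x$ and $\{x\}^\bot$, and its reduction modulo $\F x$ equals $(\overline{u})^{3}\in\calV\modu x$; therefore there exists $w\in\calV$ with $w(x)=0$ whose reduction modulo $\F x$ equals that of $u^3$.

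Set $t:=u^3-w$. This is an element of $\calS_b$ (respectively, $\calA_b$) that vanishes at $x$ and maps $\{x\}^\bot$ into $\F x$. From $t(x)=0$ and Lemma~\ref{stableortholemma}(b) we get $\im t\subset\{x\}^\bot$, hence $t^2(V)\subset\F x$ and $t^3=0$; in particular $t$ is nilpotent, so Proposition~\ref{caractensors} yields a vector $y\in\{x\}^\bot$ with $t=x\otimes_b y$ (respectively, $t=x\wedge_b y$). For every $v\in\calV$, the same computation as in the proof of Proposition~\ref{ortholemmatensors} (taken with $k=1$) gives $\tr(t\circ v)=2\,b\bigl(y,v(x)\bigr)$, whereas $\tr(t\circ v)=\tr(u^3v)-\tr(wv)=0$ by the Trace Lemma. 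Since $\car\F\neq 2$, it follows that $b(y,v(x))=0$ for all $v\in\calV$; as moreover $b(y,x)=0$, the vector $y$ is $b$-orthogonal to $\F x\oplus\calV x$, so $y\in L_{\calV,x}$ by the Strong Orthogonality Lemma (Lemma~\ref{strongorthoLemma}). Consequently $x\otimes_b y\in\calV$ (respectively, $x\wedge_b y\in\calV$), and finally $u^3=w+t\in\calV$, which completes the induction.

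The delicate point will be the middle step: one has to recognize that the obstruction $t=u^3-w$ supplied by the induction is again a \emph{nilpotent $b$-tensor} — which rests on the non-degeneracy of $b$, through Lemma~\ref{stableortholemma} — and then that its membership in $\calV$ is dictated by the maximality of $\dim\calV$ via the Strong Orthogonality Lemma. Thus the dimension hypothesis gets exploited twice: once to run the induction (Preservation of Maximality), and once to locate $t$ inside $\calV$.
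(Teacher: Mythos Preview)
Your proof is correct. The present paper does not supply its own argument for this lemma but merely cites Lemmas~5.2 and~5.3 of \cite{dSPStructured1}, so a direct comparison of methods is not possible here; what can be said is that your inductive proof is entirely self-contained within the framework of this article, relying only on the Preservation of Maximality Lemma, the Strong Orthogonality Lemma, Proposition~\ref{caractensors}, and the Trace Lemma, none of which depend on Lemma~\ref{cubeslemma}. The mechanism you isolate --- that the defect $t=u^3-w$ handed down by the induction is automatically a nilpotent $b$-tensor, and that the Strong Orthogonality Lemma then forces $t\in\calV$ --- is elegant and makes transparent exactly how the maximal-dimension hypothesis is consumed twice.
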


See lemmas 5.2 and 5.3 of \cite{dSPStructured1}. We use a simple linearity trick to derive the following
corollary, which will turn out to be very useful when $b$ is symmetric:

\begin{cor}\label{cubescor}
Let $b$ be a non-degenerate symmetric or alternating bilinear form on $V$, with Witt index $\nu$.
Let $\calV$ be a nilpotent linear subspace of $\calS_b$ (respectively, of $\calA_b$)
with dimension $\nu(n-\nu)$ (respectively, $\nu(n-\nu-1)$).
Assume furthermore that $|\F| >3$.

Then, $u^2v+uvu+vu^2 \in \calV$ for all $u$ and $v$ in $\calV$.
\end{cor}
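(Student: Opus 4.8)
The plan is to derive this from Lemma \ref{cubeslemma} by a standard polarization (linearization) trick. First I would note that Lemma \ref{cubeslemma} applies to $\calV$ itself: indeed $\calV$ is a linear subspace, all of whose elements are nilpotent, contained in $\calS_b$ (respectively, in $\calA_b$), and of the maximal dimension $\nu(n-\nu)$ (respectively, $\nu(n-\nu-1)$). Hence $w^3 \in \calV$ for every $w \in \calV$. In particular, fixing $u,v \in \calV$, for every scalar $\lambda \in \F$ the element $u+\lambda v$ lies in $\calV$, so $(u+\lambda v)^3 \in \calV$.

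Next I would expand this cube: a direct computation gives
$$(u+\lambda v)^3 = u^3 + \lambda\,(u^2v+uvu+vu^2) + \lambda^2\,(uv^2+vuv+v^2u) + \lambda^3\, v^3.$$
Since $u^3 \in \calV$ and $v^3 \in \calV$, it follows that $\lambda\,(u^2v+uvu+vu^2) + \lambda^2\,(uv^2+vuv+v^2u)$ lies in $\calV$ for every $\lambda \in \F$. Now, because $|\F|>3$, we can choose two distinct nonzero scalars $\lambda_1,\lambda_2 \in \F$; setting $A:=u^2v+uvu+vu^2$ and $B:=uv^2+vuv+v^2u$, we obtain $\lambda_1 A+\lambda_1^2 B \in \calV$ and $\lambda_2 A+\lambda_2^2 B \in \calV$. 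The $2\times 2$ coefficient matrix has determinant $\lambda_1\lambda_2^2-\lambda_2\lambda_1^2=\lambda_1\lambda_2(\lambda_2-\lambda_1)\neq 0$, so $A$ is an $\F$-linear combination of those two elements of $\calV$, whence $A=u^2v+uvu+vu^2\in\calV$.

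There is no serious obstacle in this argument; the only points requiring a little care are that Lemma \ref{cubeslemma} must be invoked separately for $u$, for $v$, and for each $u+\lambda v$ — which is legitimate precisely because $\calV$ is a linear subspace — and that the hypothesis $|\F|>3$ is exactly what supplies the two distinct nonzero scalars needed to invert the Vandermonde-type system (and is in any case already required by Lemma \ref{cubeslemma}).
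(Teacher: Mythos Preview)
Your proof is correct and follows essentially the same polarization idea as the paper: both arguments linearize the identity $w^3\in\calV$ from Lemma~\ref{cubeslemma}. The only cosmetic difference is that the paper evaluates at the specific values $\lambda=\pm 1$ (computing $(u+v)^3-(u-v)^3-2v^3=2(u^2v+uvu+vu^2)$ and dividing by $2$ using $\operatorname{char}\F\neq 2$), whereas you set up a small Vandermonde system using two distinct nonzero scalars supplied by $|\F|>3$.
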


\begin{proof}
Let $u$ and $v$ belong to $\calV$.
By Lemma \ref{cubeslemma}, the linear subspace $\calV$ contains
$$(u+v)^3-(u-v)^3-2v^3=2(u^2v+uvu+vu^2)$$
and we conclude by using the fact that the characteristic of $\F$ is not $2$.
\end{proof}

\subsection{Extending scalars}\label{extensionscalarsSection}

Let $\calV$ be a nilpotent subspace of $\calS_b$ (respectively, of $\calA_b)$.
Let $\L$ be a field extension of $\F$. We assume that the extension $\F - \L$
preserves Witt indices, i.e. it induces an injection from the Witt group of $\F$
to the one of $\L$. In particular, this is known to hold if $\L$ is purely transcendental over $\F$,
e.g.\ when $\L=\F(t)$ (see \cite{invitquad} Chapter XV, Section 3). This case is interesting because $\L$ then turns out to be infinite.
Next, we set $V_\L:=\L \otimes_\F V$. Given $u \in \End(V)$, we define $u_\L$ as the endomorphism of the $\L$-vector space
$V_\L$ such that $u_\L(1 \otimes x)=u(x)$ for all $x \in V$.
The mapping $u \in \End(V) \mapsto u_\L \in \End_\L(V_\L)$ is a homomorphism of $\F$-algebras.
Given a linear subspace $\calV$ of $\End(V)$, it is folklore that
the span $\calV_\L$ of $\{u_\L \mid u \in \calV\}$ in the $\L$-vector space $\End_\L(V_\L)$
has its dimension over $\L$ equal to the one of $\calV$ over $\F$; better still, for every basis
$(u_1,\dots,u_k)$ of $\calV$, the family $\bigl((u_1)_\L,\dots,(u_k)_\L\bigr)$ is a basis of $\calV_\L$.

Next, let $\calV$ be a nilpotent subspace of $\calS_b$ (respectively, of $\calA_b$), and denote by $d$
the generic nilindex of $\calS_b$ (respectively, of $\calA_b$).
Assume that $|\F| \geq d$. We claim that $\calV_\L$ is nilpotent.
Indeed, let us take a basis $(u_1,\dots,u_p)$ of $\calV$.
Then,
$$\forall (\lambda_1,\dots,\lambda_p)\in \F^p, \; \biggl(\sum_{k=1}^p \lambda_k\, u_k\biggr)^d=0,$$
which, by extending to $V_\L$, yields
$$\forall (\lambda_1,\dots,\lambda_p)\in \F^p, \; \biggl(\sum_{k=1}^p \lambda_k\, (u_k)_{\L}\biggr)^d=0.$$
On the left-hand side is a (vector-valued) polynomial function, homogenous with degree $d$;
since we have assumed that $|\F|\geq d$, this function vanishes everywhere on $\L^p$, whence
every element of $\calV_\L$ is nilpotent.

Next, $b$ induces a bilinear form $b_\L$ on the $\L$-vector space $V_\L$ such that
$$\forall (x,y)\in V^2, \quad b_\L(1 \otimes x,1 \otimes y)=b(x,y).$$
This form is symmetric if $b$ is symmetric, alternating if $b$ is alternating.
Finally, it is clear that $u_\L$ is $b_\L$-symmetric (respectively, $b_\L$-alternating) for all $u \in \calS_b$ (respectively, $u \in \calA_b$).
Hence, $\calV_\L$ is a linear subspace of $\calS_{b_\L}$ (respectively, of $\calA_{b_\L}$).

Finally, assume that there is a non-zero isotropic vector $y$ of $V_\L$ at which every element of $\calV_\L$
vanishes. There is a unique minimal subspace $E$ of $V$ such that $y$ can be written as a sum of tensors
of the form $\lambda \otimes x$ with $\lambda \in \L$ and $x \in E$.
Then, every element of $\calV$ must vanish at every element of $E$.
Moreover, we claim that $E$ contains a non-zero $b$-isotropic vector. Indeed, let
$(e_1,\dots,e_q)$ be a basis of $E$. We write $y=\underset{k=1}{\overset{q}{\sum}} \lambda_k \otimes e_k$
for some $(\lambda_k)_{1 \leq k \leq q} \in \L^q$. Since $b_\L(y,y)=0$, the quadratic form
$$Q : (\mu_1,\dots,\mu_q)\in \F^q \mapsto b\biggl(\sum_{k=1}^q \mu_k e_k,\sum_{k=1}^q \mu_k e_k\biggr)=\sum_{1 \leq k,l \leq q} \mu_k\, \mu_l\, b(e_k,e_l)$$
becomes isotropic over $\L$.
Since we have assumed that the extension $\L$ of $\F$ preserves Witt indices, we conclude
that $Q$ is isotropic, which yields a non-zero $b$-isotropic vector $x$ of $E$.
We conclude that every element of $\calV$ vanishes at~$x$.

\vskip 3mm
Combining the above remarks with Corollary \ref{nilindexCor}, we obtain the following results:

\begin{prop}\label{extensionprop}
Let $\L$ be a field extension of $\F$ that preserves Witt indices. Let $b$ be a non-degenerate symmetric or alternating bilinear form
on an $\F$-vector space $V$ with finite dimension $n$. Denote by $\nu$ the Witt index of $b$.
Let $\calV$ be a nilpotent linear subspace of $\calS_b$ or of $\calA_b$.
\begin{enumerate}[(a)]
\item If $b$ is alternating, $|\F| \geq n$ and $\calV \subset \calS_b$, then
$\calV_\L$ is a nilpotent subspace of~$\calS_{b_\L}$.
\item If $b$ is symmetric, $n \neq 2\nu$, $|\F| \geq 2\nu+1$ and $\calV\subset \calA_b$,
then $\calV_\L$ is a nilpotent subspace of~$\calA_{b_\L}$.
\item If $b$ is symmetric, $n=2\nu$, $|\F| \geq n-1$ and $\calV \subset \calA_b$,
then $\calV_\L$ is a nilpotent subspace of~$\calA_{b_\L}$.
\item If there exists a non-zero $b_\L$-isotropic vector of $V_\L$ at which all the elements of $\calV_\L$
vanish, then there exists a non-zero $b$-isotropic vector of $V$ at which all the elements of $\calV$
vanish.
\end{enumerate}
\end{prop}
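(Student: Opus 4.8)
The statement to prove is Proposition~\ref{extensionprop}, which collects four consequences of the scalar-extension discussion immediately preceding it. The plan is essentially to package the remarks that have just been spelled out, checking that the hypotheses in each item are exactly what is needed to invoke the relevant earlier lemma.

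First I would handle items (a), (b), (c) simultaneously, since they all assert that $\calV_\L$ is nilpotent and lies in $\calS_{b_\L}$ (resp.\ $\calA_{b_\L}$). The discussion above established that, for $\calV$ a nilpotent subspace of $\calS_b$ (resp.\ of $\calA_b$) with $d$ the generic nilindex of $\calS_b$ (resp.\ of $\calA_b$), the hypothesis $|\F|\geq d$ forces $\calV_\L$ to be nilpotent: one picks a basis $(u_1,\dots,u_p)$ of $\calV$, uses that $\bigl(\sum_k\lambda_k u_k\bigr)^d=0$ for all $(\lambda_k)\in\F^p$, extends scalars, and invokes the fact that a homogeneous vector-valued polynomial function of degree $d$ vanishing on a field with at least $d$ elements vanishes identically, hence on all of $\L^p$. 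We also observed there that $\calV_\L\subset\calS_{b_\L}$ (resp.\ $\calA_{b_\L}$) because each $u_\L$ is $b_\L$-symmetric (resp.\ $b_\L$-alternating) and the span is preserved. So for (a), (b), (c) it only remains to identify $d$. By Corollary~\ref{nilindexCor}(a), when $b$ is alternating the generic nilindex of $\calS_b$ is $n$, which matches the hypothesis $|\F|\geq n$ in (a). By Corollary~\ref{nilindexCor}(b), when $b$ is symmetric with $n\neq 2\nu$ the generic nilindex of $\calA_b$ is $2\nu+1$, matching $|\F|\geq 2\nu+1$ in (b). By Corollary~\ref{nilindexCor}(c), when $b$ is symmetric with $n=2\nu$ the generic nilindex of $\calA_b$ is $n-1$, matching $|\F|\geq n-1$ in (c). In each case the cardinality hypothesis is exactly $|\F|\geq d$, so the preceding discussion applies verbatim and yields the claim.

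For item (d), I would reproduce the final paragraph of the discussion. Suppose $y\in V_\L\setminus\{0\}$ is $b_\L$-isotropic and annihilated by every element of $\calV_\L$. Let $E$ be the (unique) minimal $\F$-subspace of $V$ such that $y$ is an $\L$-combination of tensors $\lambda\otimes x$ with $\lambda\in\L$, $x\in E$; choosing a basis $(e_1,\dots,e_q)$ of $E$ and writing $y=\sum_{k=1}^q\lambda_k\otimes e_k$, the minimality of $E$ forces every element of $\calV$ to kill each $e_k$, hence all of $E$. It remains to show $E$ contains a non-zero $b$-isotropic vector: the quadratic form $Q(\mu_1,\dots,\mu_q)=b\bigl(\sum_k\mu_k e_k,\sum_k\mu_k e_k\bigr)$ on $\F^q$ becomes isotropic over $\L$ because $b_\L(y,y)=0$, and since by hypothesis the extension $\F-\L$ preserves Witt indices, $Q$ is already isotropic over $\F$; this produces a non-zero $b$-isotropic $x\in E$, which is therefore killed by every element of $\calV$.

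The proof is thus almost entirely bookkeeping: the substantive work (the polynomial-vanishing argument for nilpotency, the Witt-index transfer for item (d)) was carried out in the discussion preceding the proposition, and the role of the proof proper is just to match those arguments against the three nilindex values supplied by Corollary~\ref{nilindexCor}. If there is any subtlety, it is purely in being careful that ``generic nilindex of $\calS_b$ (resp.\ $\calA_b$)'' is the correct $d$ to use — rather than, say, the generic nilindex of $\calV$ itself — so that the nilpotency argument genuinely covers every element of the larger space $\calV_\L$; but this is exactly how the preceding discussion was set up, so no difficulty arises.
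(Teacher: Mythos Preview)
Your proposal is correct and follows exactly the paper's approach: the proposition is stated as an immediate consequence of the preceding scalar-extension discussion together with Corollary~\ref{nilindexCor}, and you have faithfully unpacked that combination, matching each cardinality hypothesis to the corresponding generic nilindex and reproducing the Witt-index-preservation argument for part~(d).
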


We are now ready to conclude.

\begin{prop}\label{reductiontoinfinitealt}
If Theorem \ref{alttheo} holds over any infinite field (with characteristic not $2$), then it holds over any field (with characteristic not $2$) that satisfes its cardinality requirements.
\end{prop}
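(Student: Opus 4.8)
The plan is to use the infinite extension $\L:=\F(t)$ to transport the conclusion, and then to peel off one isotropic vector and recurse. So suppose Theorem~\ref{alttheo} holds over every infinite field of characteristic not $2$, and argue by induction on $n=\dim V$: let $(\F,b,V,\calV)$ satisfy the hypotheses of Theorem~\ref{alttheo} with $\F$ possibly finite, $|\F|\ge\min(n,2\nu+1)$, where $\nu$ is the Witt index of $b$, and assume the theorem is already known in every dimension $<n$ (over arbitrary fields of characteristic not $2$ meeting the cardinality bound). If $\nu=0$ then $b$ is non-isotropic and Lemma~\ref{nonisotropicLemma} forces $\calV=\{0\}=\WA_{b,(\{0\})}$; this disposes of the base case and, more generally, of every instance with $\nu=0$. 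So from now on assume $\nu\ge1$, hence $n\ge2$.

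First I would pass to $\L:=\F(t)$, which is infinite, of characteristic not $2$, and purely transcendental over $\F$, hence preserves Witt indices; in particular the anisotropic part of $b$ remains anisotropic, so $b_\L$ is non-degenerate symmetric with the same Witt index $\nu$. Since $|\F|\ge\min(n,2\nu+1)$ meets the cardinality requirement of Proposition~\ref{extensionprop}(b)--(c), that proposition tells us $\calV_\L$ is a nilpotent subspace of $\calA_{b_\L}$, and $\dim_\L\calV_\L=\dim_\F\calV=\nu(n-\nu-1)$. Applying the assumed infinite-field case of Theorem~\ref{alttheo} to $(\L,b_\L,V_\L,\calV_\L)$ (its cardinality hypothesis being vacuous), we get $\calV_\L=\WA_{b_\L,\calF_\L}$ for a maximal partially complete $b_\L$-singular flag $\calF_\L=(F_0,\dots,F_\nu)$ of $V_\L$. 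The line $F_1$ is totally $b_\L$-singular, hence spanned by a non-zero $b_\L$-isotropic vector, and every element of $\WA_{b_\L,\calF_\L}$ maps $F_1$ into $F_0=\{0\}$; so all elements of $\calV_\L$ vanish at a non-zero $b_\L$-isotropic vector. Proposition~\ref{extensionprop}(d) then yields a non-zero $b$-isotropic vector $x\in V$ with $\calV x=\{0\}$.

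From this point I would run the inductive step of Section~\ref{proofreview}. Let $\overline b$ be the form induced by $b$ on $W:=\{x\}^\bot/\F x$: it is non-degenerate symmetric, $\dim W=n-2$, and its Witt index is $\nu-1$. By the Preservation of Maximality Lemma (Lemma~\ref{preservemaxLemma}), $\calV\bmod x$ is a nilpotent subspace of $\calA_{\overline b}$ of the maximal dimension $(\nu-1)\bigl((n-2)-(\nu-1)-1\bigr)$. Since $|\F|\ge\min(n,2\nu+1)\ge\min\bigl(n-2,\,2(\nu-1)+1\bigr)$, the induction hypothesis applies to $(\F,\overline b,W,\calV\bmod x)$ in dimension $n-2<n$, giving $\calV\bmod x=\WA_{\overline b,\calG}$ for some maximal partially complete $\overline b$-singular flag $\calG$ of $W$. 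As $\calV x=\{0\}$, the Lifting Lemma (Lemma~\ref{liftinglemma}) now produces a maximal partially complete $b$-singular flag $\calF$ of $V$ with $\calV=\WA_{b,\calF}$, which closes the induction.

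The routine verifications --- that $\F(t)/\F$ leaves the Witt index equal to $\nu$ rather than merely not decreasing it, that the cardinality hypothesis is what Proposition~\ref{extensionprop} requires and that it descends from the dimension-$n$ problem to the dimension-$(n-2)$ one, and that the $\nu=0$ case is split off so the recursion terminates --- are the only places one must be a little careful; there is no genuine obstacle, since the substance has been packaged into Proposition~\ref{extensionprop} and the inductive lemmas of Section~\ref{proofreview}.
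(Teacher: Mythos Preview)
Your proof is correct and follows essentially the same approach as the paper's own proof: extend scalars to $\L=\F(t)$, apply the infinite-field case of Theorem~\ref{alttheo} to locate a non-zero isotropic vector annihilated by $\calV_\L$, descend via Proposition~\ref{extensionprop}(d) to such a vector $x\in V$, and then combine the Preservation of Maximality Lemma, the induction hypothesis on $\{x\}^\bot/\F x$, and the Lifting Lemma to conclude. Your handling of the cardinality descent $|\F|\ge\min(n,2\nu+1)\ge\min(n-2,2(\nu-1)+1)$ is slightly more streamlined than the paper's case split, but the argument is otherwise identical.
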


\begin{proof}
Assume that Theorem \ref{alttheo} holds over any infinite field with characteristic not $2$.
Then, we prove Theorem \ref{alttheo} by induction on the dimension of the vector space $V$.
Let $\F$ be a field with characteristic not $2$, $V$ be an $n$-dimensional vector space over $\F$ (with $n>0$),
$b$ be a symmetric bilinear form on $V$, and
$\calV$ be a nilpotent subspace of $\calA_b$ with dimension $\nu(n-\nu-1)$.  Assume that $|\F| \geq 2\nu+1$ if $n>2\nu$, and that $|\F| \geq n-1$ otherwise. Let us choose an infinite extension $\L$ of $\F$ that preserves Witt indices (e.g.\ $\L=\F(t)$).
By Proposition \ref{extensionprop}, $\calV_\L$ is a nilpotent subspace of $\calA_{b_\L}$
with dimension $\nu(n-\nu-1)$. Since $\L$ is infinite we deduce that
$\calV_\L=\WA_{b_\L,\calF}$ for some maximal partially complete $b_\L$-singular flag $\calF$ of $V_\L$.
In particular, there is a non-zero $b_\L$-isotropic vector of $V_\L$ at which all the elements of $\calV_\L$
vanish. By point (d) of Proposition \ref{extensionprop}, there exists a non-zero isotropic vector $x$ of $V$
at which all the elements of $\calV$ vanish.

Denote by $\overline{b}$ the symmetric bilinear form
induced by $b$ on $\{x\}^\bot/\F x$.
By Lemma \ref{preservemaxLemma}, $\calV \modu x$ is a nilpotent subspace of $\calA_{\overline{b}}$ with dimension $(\nu-1)\bigl((n-2)-(\nu-1)-1\bigr)$.
Noting that $|\F|\geq 2\nu+1>2\nu-1$ if $2(\nu-1)\neq n-2$ (i.e.\ if $2\nu \neq n$), and $|\F| \geq n-1 \geq n-3$ otherwise, we
find by induction that $\calV \modu x=\WA_{\overline{b},\calG}$ for some
maximal partially complete $\overline{b}$-singular flag $\calG$ of $\{x\}^\bot/\F x$.
Then, the Lifting Lemma (Lemma \ref{liftinglemma}) yields that $\calV=\WA_{b,\calF}$ for some
maximal partially complete $b$-singular flag $\calF$ of $V$.
\end{proof}

With the same line of reasoning, we also get:

\begin{prop}\label{reductiontoinfinitesym}
If Theorem \ref{symtheo} holds over any infinite field (with characteristic not $2$), then it holds over any field (with characteristic not $2$) that satisfies its cardinality requirements.
\end{prop}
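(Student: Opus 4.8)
The plan is to transcribe, almost word for word, the proof of Proposition \ref{reductiontoinfinitealt}, the only differences being bookkeeping: here $b$ is alternating and we work inside $\calS_b$ instead of $\calA_b$, so the relevant cardinality bound is $|\F|\geq n$ rather than $|\F|\geq 2\nu+1$ (or $n-1$), and since $b$ is symplectic one always has $n=2\nu$, which streamlines the cardinality checks. Assume Theorem \ref{symtheo} holds over every infinite field of characteristic not $2$, and prove the general statement by induction on $n:=\dim V$, the case $n=0$ being trivial. So fix $\F$ of characteristic not $2$, an $n$-dimensional $\F$-vector space $V$ with $n>0$, a symplectic form $b$ on $V$ (hence $n$ even and $\nu:=n/2$), and a nilpotent subspace $\calV$ of $\calS_b$ with $\dim\calV=\nu(n-\nu)$ and $|\F|\geq n$. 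Choose an infinite extension $\L$ of $\F$ preserving Witt indices, e.g.\ $\L=\F(t)$.

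First I would extend scalars. Since $b$ is alternating, $|\F|\geq n$ and $\calV\subset\calS_b$, point (a) of Proposition \ref{extensionprop} shows that $\calV_\L$ is a nilpotent subspace of $\calS_{b_\L}$; moreover $\dim_\L\calV_\L=\dim_\F\calV=\nu(n-\nu)$, and $b_\L$ is again a symplectic form on the $n$-dimensional space $V_\L$, with Witt index $\nu$. As $\L$ is infinite, the assumed form of Theorem \ref{symtheo} gives a maximal partially complete $b_\L$-singular flag $\calF'=(F'_i)_{0\leq i\leq \nu}$ of $V_\L$ with $\calV_\L=\WS_{b_\L,\calF'}$. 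The line $F'_1$ is then spanned by a non-zero $b_\L$-isotropic vector annihilated by every element of $\calV_\L$, so point (d) of Proposition \ref{extensionprop} yields a non-zero $b$-isotropic vector $x\in V$ with $\calV x=\{0\}$.

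Next I would perform the inductive descent exactly as in Proposition \ref{reductiontoinfinitealt}. Let $\overline b$ be the form induced by $b$ on $\{x\}^\bot/\F x$; it is a symplectic form on a space of dimension $n-2$, with Witt index $\nu-1$. By the Preservation of Maximality Lemma (Lemma \ref{preservemaxLemma}), $\calV\modu x$ is a nilpotent subspace of $\calS_{\overline b}$ of maximal dimension $(\nu-1)\bigl((n-2)-(\nu-1)\bigr)$. Since $|\F|\geq n\geq n-2$, the induction hypothesis applies and gives a maximal partially complete $\overline b$-singular flag $\calG$ of $\{x\}^\bot/\F x$ with $\calV\modu x=\WS_{\overline b,\calG}$. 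The Lifting Lemma (Lemma \ref{liftinglemma}), using $\calV x=\{0\}$, then produces a maximal partially complete $b$-singular flag $\calF$ of $V$ with $\calV=\WS_{b,\calF}$, closing the induction.

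I do not expect any genuine obstacle here: the argument is a line-by-line adaptation of the alternating case. The only points worth double-checking are that the bound $|\F|\geq n$ is precisely what makes point (a) of Proposition \ref{extensionprop} applicable (this is exactly where the cardinality hypothesis of Theorem \ref{symtheo} is spent, ensuring nilpotency survives the passage to $V_\L$), and that this bound is inherited by the quotient of dimension $n-2$, which it trivially is.
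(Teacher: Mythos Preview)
Your proposal is correct and follows exactly the approach intended by the paper, which simply states that Proposition \ref{reductiontoinfinitesym} is obtained ``with the same line of reasoning'' as Proposition \ref{reductiontoinfinitealt}. Your write-up faithfully carries out that adaptation, invoking point (a) of Proposition \ref{extensionprop} (which is where the hypothesis $|\F|\geq n$ is used), then point (d), Lemma \ref{preservemaxLemma}, the induction hypothesis (the bound $|\F|\geq n\geq n-2$ being trivially inherited), and the Lifting Lemma.
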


\section{Spaces of symmetric nilpotent endomorphisms for a symplectic form}\label{symproofsection}

We are now ready for the proofs of Theorems \ref{alttheo} and \ref{symtheo}.
In the present section, we deal with the latter, which turns out to be the easier.

By Proposition \ref{reductiontoinfinitesym}, Theorem \ref{symtheo} requires a proof only for infinite fields.
So, we fix an infinite field $\F$ (with characteristic not $2$) and we prove Theorem \ref{symtheo} by induction on the dimension of the vector space $V$ under consideration. The result is obvious for a vector space with dimension $0$.
Let $V$ be vector space over $\F$ with finite dimension $n>0$, and let $b$ be a symplectic form on $V$.
Let $\calV$ be a nilpotent subspace of $\calS_b$ with dimension $\nu(n-\nu)$, where $\nu:=\frac{n}{2}\cdot$
By induction and the Lifting Lemma (Lemma \ref{liftinglemma}), it suffices to show that there exists
a non-zero vector $x \in V$ such that $\calV x=\{0\}$. We split the proof of the existence of such a vector
into two subcases, whether the generic nilindex of $\calV$ is at most $2$ or not.

\subsection{A special case}\label{nilindex2section}

Here, we assume that all the elements of $\calV$ have square zero.
Hence $uv+vu=(u+v)^2-u^2-v^2=0$ for all $(u,v)\in \calV^2$.
Classically, any subset $\calA$ of skew-commuting nilpotent elements of $\End(V)$ vanishes at some non-zero vector. To see this, one can use Jacobson's triangularization theorem \cite{Jacobson,Radjavi} for example, but one can also give a very elementary proof: if $\calA \subset \{0\}$ the result is obvious; otherwise choose $u \in \calA \setminus \{0\}$ and note that $\Ker u$ is stable under all the elements of $\calA$. By restricting the elements of $\calA$ to $\Ker u$, we recover a skew-commuting set of square-zero endomorphisms of $\Ker u$, and by induction there is a non-zero vector $x$ of $\Ker u$ at which all the elements of $\calA$ vanish.

\subsection{When the generic nilindex is greater than $2$ (part 1)}

In the rest of the proof, we assume that the generic nilindex of $\calV$ is greater than $2$.

\begin{claim}
The generic nilindex of $\calV$ is greater than or equal to $n-2$.
\end{claim}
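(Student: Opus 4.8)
The plan is to transplant, nearly verbatim, the argument used for the analogous statement in the new proof of Gerstenhaber's theorem (the claim ``$p\in\{n-1,n\}$''): produce an element of $\calV$ of large nilindex by lifting one from a smaller space to which the induction hypothesis of Theorem \ref{symtheo} applies.

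First I would fix an arbitrary non-zero vector $x\in V$; since $b$ is symplectic, $x$ is automatically $b$-isotropic, so the reduction machinery of Section \ref{proofreview} is available. Let $\overline{b}$ be the symplectic form induced by $b$ on the $(n-2)$-dimensional space $\{x\}^\bot/\F x$. By the Preservation of Maximality Lemma (Lemma \ref{preservemaxLemma}), $\calV\modu x$ is a nilpotent subspace of $\calS_{\overline{b}}$ of maximal dimension, and since $\F$ is infinite the cardinality requirement of Theorem \ref{symtheo} in dimension $n-2$ is met; hence the induction hypothesis yields $\calV\modu x=\WS_{\overline{b},\calG}$ for some maximal partially complete $\overline{b}$-singular flag $\calG$. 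Now Lemma \ref{genericnilindexsym} tells us that the generic nilindex of $\WS_{\overline{b},\calG}$ equals $n-2$, so I can pick $\overline{v}\in\calV\modu x$ with $\overline{v}^{\,n-3}\neq 0$.

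The last step is to lift $\overline{v}$ to an element $v\in\calU_{\calV,x}\subseteq\calV$ whose induced endomorphism on $\{x\}^\bot/\F x$ is $\overline{v}$ (possible by definition of $\calV\modu x$). As $v(x)=0$, the operator $v$ stabilizes $\F x$, hence by Lemma \ref{stableortholemma}(a) also $\{x\}^\bot$; consequently $v^{\,n-3}$ restricted to $\{x\}^\bot$ descends to $\overline{v}^{\,n-3}$ on the quotient, so $\overline{v}^{\,n-3}\neq 0$ forces $v^{\,n-3}\neq 0$, that is $\ind(v)\geq n-2$, whence the generic nilindex of $\calV$ is at least $n-2$. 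I expect no genuine obstacle here: every step is a bookkeeping application of results already in place, and the only thing worth flagging is that one must invoke the induction hypothesis of Theorem \ref{symtheo} itself (applied in dimension $n-2$), not any earlier structural statement; note also that the running assumption ``the generic nilindex of $\calV$ is greater than $2$'' is not actually used in this claim — it merely records that the special case of Section \ref{nilindex2section} has been disposed of, and incidentally it forces $n\geq 4$ so that $n-3\geq 0$.
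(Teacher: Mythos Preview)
Your proof is correct and follows essentially the same route as the paper's own argument: pick a non-zero (automatically isotropic) vector $x$, apply the Preservation of Maximality Lemma and the induction hypothesis to identify $\calV\modu x$ as a space of the form $\WS_{\overline{b},\calG}$, invoke Lemma~\ref{genericnilindexsym} to find an element of nilindex $n-2$ there, and lift it back to $\calV$. You spell out the lifting step in slightly more detail than the paper does, but the underlying idea is identical.
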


\begin{proof}
Choose an arbitrary vector $x \in V \setminus \{0\}$.
Denote by $\overline{b}$ the symplectic form induced by $b$ on $\{x\}^\bot/\F x$.
By Lemma \ref{preservemaxLemma}, the subspace $\calV \modu x$ has dimension $(\nu-1)\bigl((n-2)-(\nu-1)\bigr)$
and it is a nilpotent subspace of $\calS_{\overline{b}}$. Hence, by induction $\calV \modu x=\WS_{\overline{b},\calG}$
for some maximal partially complete $\overline{b}$-singular flag $\calG$ of $\{x\}^\bot/\F x$.
Hence, by Lemma \ref{genericnilindexsym} there is an element $v \in \calV \modu x$ with nilindex $n-2$.
We conclude that $\ind(u) \geq n-2$ for some $u \in \calV$.
\end{proof}

In the rest of the proof, we denote by $p$ the generic nilindex of $\calV$.
Remember that we have assumed $p \geq 3$.

\begin{claim}
The nilpotent subspace $\calV$ is pure.
\end{claim}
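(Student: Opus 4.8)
We must show that in our setting (an infinite field $\F$ of characteristic not $2$, a symplectic form $b$ on an $n$-dimensional $V$, a nilpotent subspace $\calV \subset \calS_b$ of maximal dimension $\nu(n-\nu)$, and generic nilindex $p \geq 3$), every element $u \in \calV$ with $\ind(u) = p$ has exactly one Jordan cell of size $p$, i.e. $\rk u^{p-1} \leq 1$.

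**Strategy.** The plan is to combine the bound $p \geq n-2$ from the previous claim with the structural constraints on nilpotent elements of $\calS_b$ coming from Proposition \ref{Jordanaltsym} (indecomposable nilpotent $b$-symmetric endomorphisms are Jordan cells of even size, or sums of two Jordan cells of equal odd size) and from Lemma \ref{SETIlemma} / Corollary \ref{MaxRankNil} (rank and nilindex bounds tied to the Witt index $\nu = n/2$). Fix $u \in \calV$ with $\ind(u) = p$. The idea is that for $p$ this close to $n$, the Jordan structure of $u$ has very little room, so $u^{p-1}$ can only have rank $1$.

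**Key steps, in order.** First I would record the Jordan type of $u$ as a nilpotent endomorphism of $V$: it is a direct sum of Jordan cells, the largest having size exactly $p$, and $\rk u^{p-1}$ equals the number of Jordan cells of size $p$. Since $\dim V = n$ and $p \geq n-2$, the only possibilities with more than one cell of size $p$ are: two cells of size $p = n/2$ (so $n$ even, $n = 2p$), or — ruled out immediately by $p \geq n-2 \geq 2$ only when $n \leq 4$ — a handful of tiny exceptional cases; genuinely the sole case to worry about is $n = 2p$ with two cells of size $p$. (If $p = n$ or $p = n-1$ there can be at most one cell of size $p$, and if $p = n-2$ then two cells of size $p$ forces $n \geq 2(n-2)$, i.e. $n \leq 4$, hence $p \leq 2$, contradicting $p \geq 3$; so in fact even the $n = 2p$ case needs $p = n/2 \geq 3$, i.e. $n \geq 6$.) So I would reduce to: suppose $n = 2p$ and $u$ has exactly two Jordan cells of size $p$; derive a contradiction. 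Now use the structure theory: restrict attention to the $u$-indecomposable summands of $V$ (with respect to $b$). By Proposition \ref{Jordanaltsym}, since $b$ is alternating, each indecomposable $b$-stable summand on which $u$ acts is a single Jordan cell of even size or a pair of equal-odd-size cells; a $b$-orthogonal decomposition of $V$ into such summands recovers the global Jordan type. Two Jordan cells of size $p$ must therefore either sit inside one indecomposable summand (forcing $p$ odd, and then $V$ itself is that summand, $V = W_x \oplus W_y$ as in the proof of Proposition \ref{Jordansymalt}'s analogue) — this is allowed by the structure theory, so this is exactly where the real argument must live — or be distributed among summands of size $< p$, which is impossible since each cell has size $p$. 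So the surviving configuration is: $n = 2p$, $p$ odd, $u$ is $b$-indecomposable of type ``two Jordan cells of size $p$''. The final step is to eliminate this configuration using the maximality of $\dim \calV = \nu(n-\nu) = p \cdot p = p^2$ together with the trace/orthogonality machinery: I would pick a vector $z$ spanning (a line in) $\im u^{p-1}$, which is $2$-dimensional here, hence pick $z$ isotropic — note $\im u^{p-1} = \Ker u \cap \im u$ is $b$-related to $\Ker u^{p-1} = (\im u^{p-1})^\bot$ by Lemma \ref{stableortholemma}, and for $p$ odd $u^{p-1} \in \calS_b$, so the quadratic form $x \mapsto b(x, u^{p-1}(x))$ on a suitable complement is nondegenerate of rank $2$, forcing an isotropic line in $\im u^{p-1}$ — and then show via the Orthogonality Lemma for Tensors (Proposition \ref{ortholemmatensors}) and the Strong Orthogonality Lemma (Lemma \ref{strongorthoLemma}) that the dimension count $p^2$ cannot be met, because the ``module mod $x$'' would have generic nilindex strictly less than that of $\WS_{\overline b, \calG}$, contradicting Lemma \ref{preservemaxLemma} and Lemma \ref{genericnilindexsym}.

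**Main obstacle.** The delicate point is the very last step: ruling out the $b$-indecomposable ``two cells of size $p$ (p odd)'' configuration. The combinatorial/Jordan-theoretic part is routine, but showing it is incompatible with $\dim \calV$ being exactly $\nu(n-\nu)$ requires carefully leveraging the inductive structure — one must produce a non-zero isotropic $x$ with $\calV x = \{0\}$ or, failing that, contradict Lemma \ref{preservemaxLemma} via the maximal-nilindex computation of Lemma \ref{genericnilindexsym}. I expect the cleanest route is: observe that in this configuration $\im u^{p-1}$ is $2$-dimensional and totally $b$-singular (indeed $\im u^{p-1} = \Ker u^{p-1^\bot} \subset \Ker u^{p-1}$ exactly when $p-1 \geq p-1$... more precisely since $2(p-1) \geq p$ for $p \geq 2$, $u^{p-1} \cdot u^{p-1} = u^{2(p-1)} = 0$, so $\im u^{p-1} \subset \Ker u^{p-1} = (\im u^{p-1})^\bot$), pick any non-zero $x \in \im u^{p-1}$, apply Proposition \ref{ortholemmatensors} with all odd $k < |\F|$ (valid since $\F$ infinite) to constrain $L_{\calV,x}$, and combine with the Strong Orthogonality Lemma to force $\calV x$ small enough that $\calV \modu x$ fails to achieve the maximal dimension $(\nu-1)(n-2-(\nu-1))$ predicted by Lemma \ref{preservemaxLemma} unless $\calV x = \{0\}$; but $\calV x = \{0\}$ with $x \in \im u^{p-1}$ and $u \in \calV$ is compatible, so one instead pushes to contradict the nilindex: the induced $\overline u$ on $\{x\}^\bot/\F x$ would have generic nilindex $< n-2$, contradicting Lemma \ref{genericnilindexsym} applied to $\calV \modu x = \WS_{\overline b, \calG}$. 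Getting this bookkeeping exactly right — in particular tracking what happens to the two size-$p$ cells of $u$ after passing to the quotient — is the crux.
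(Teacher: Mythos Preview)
Your combinatorial analysis already contains a complete proof, but you fail to notice this and then embark on a long and unnecessary detour. You correctly observe that $p \geq n-2$ (previous claim) and $p \geq 3$ (standing assumption), and you correctly dispose of the cases $p=n$, $p=n-1$, $p=n-2$. Since $p \leq n$ always, these three cases are \emph{exhaustive}. There is no separate ``$n=2p$'' case to worry about: if $u$ had two Jordan cells of size $p$ then $2p \leq n$, but combined with $p \geq n-2$ this forces $n \leq 4$, hence $p = n/2 \leq 2$, contradicting $p \geq 3$. You essentially wrote this down inside your parenthetical for the $p=n-2$ case, but then treated ``$n=2p$'' as if it were a surviving configuration requiring the structure theory of Proposition~\ref{Jordanaltsym}, the Strong Orthogonality Lemma, and an inductive nilindex argument.

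The paper's proof is exactly the one-line version of what you already have: from $p \geq n-2$ and $p \geq 3$ one gets $2p > n$, so any $u \in \calV$ of nilindex $p$ has at most one Jordan cell of size $p$. Everything in your ``Main obstacle'' paragraph (the appeal to indecomposable types, the choice of isotropic $z \in \im u^{p-1}$, the attempt to contradict Lemma~\ref{preservemaxLemma} via Lemma~\ref{genericnilindexsym}) is superfluous, and parts of it are speculative rather than rigorous. Drop it; the claim is purely a dimension count.
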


\begin{proof}
Let $u \in \calV$ have nilindex $p$.
Since $p \geq n-2$ and $p \geq 3$, we have $2p>n$, and hence $u$ has exactly one Jordan cell of size $p$.
\end{proof}

\begin{claim}\label{symmetrictensorclaim}
For all $x \in \calV^\bullet \setminus \{0\}$, the space $\calV$ contains $x \otimes_b x$.
\end{claim}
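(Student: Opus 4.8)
The plan is to feed Proposition~\ref{tangentprop} into the Strong Orthogonality Lemma (Lemma~\ref{strongorthoLemma}). Since $b$ is alternating, the vector $x$ is automatically $b$-isotropic, so $x\otimes_b x$ is a nilpotent element of $\calS_b$, and in the notation of Section~\ref{proofreview} the statement to be proved is exactly $x\in L_{\calV,x}$. Because $\dim\calV=\nu(n-\nu)$, the Strong Orthogonality Lemma tells us that $L_{\calV,x}=(\F x\oplus\calV x)^\bot$; hence it suffices to show that $\F x\oplus\calV x\subset\{x\}^\bot$, and since $b(x,x)=0$ this reduces further to proving that $b(x,v(x))=0$ for every $v\in\calV$.

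To obtain the latter, set $p:=\ind(\calV)$ and choose $u\in\calV$ with $x\in\im u^{p-1}$, which is possible since $x\in\calV^\bullet$. As $x\neq 0$ we have $u^{p-1}\neq 0$, so $\ind(u)=p$ and $u^p=0$; writing $x=u^{p-1}(z)$ we get $u(x)=u^p(z)=0$. Now fix $v\in\calV$. Because $\F$ is infinite we have $|\F|\geq p$, so Proposition~\ref{tangentprop} applies and yields $v(x)\in u\bigl(K(\calV)\bigr)$; write $v(x)=u(w)$ with $w\in K(\calV)$. Using that $u$ is $b$-symmetric and that $u(x)=0$, we then compute
$$b\bigl(x,v(x)\bigr)=b\bigl(x,u(w)\bigr)=b\bigl(w,u(x)\bigr)=0.$$
Letting $v$ vary gives $\calV x\subset\{x\}^\bot$, hence $\F x\oplus\calV x\subset\{x\}^\bot$, and taking $b$-orthogonals (with $b$ non-degenerate) yields $L_{\calV,x}=(\F x\oplus\calV x)^\bot\supset(\{x\}^\bot)^\bot=\F x\ni x$. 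Therefore $x\otimes_b x\in\calV$.

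The only place requiring any care is the opening reduction, which rests on the fact that the Strong Orthogonality Lemma pins down $L_{\calV,x}$ exactly (and not just an inclusion); once that is in place the argument is very short, the sole computational ingredient being the $b$-symmetry identity $b(x,u(w))=b(w,u(x))$ combined with the observation $u(x)=0$ for the operator $u$ realizing $x$ in $\im u^{p-1}$. Note in particular that this proof uses neither the purity of $\calV$ nor its stability under cubes (both of which happen to hold in the present situation).
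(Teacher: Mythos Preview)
Your proof is correct and follows essentially the same route as the paper: combine Proposition~\ref{tangentprop} with the Strong Orthogonality Lemma to land $x$ in $L_{\calV,x}$. The only cosmetic difference is that the paper observes $\F x\oplus\calV x\subset u(K(\calV))\cup\im u^{p-1}\subset\im u$ and then dualizes via $\Ker u=(\im u)^\bot$ to get the (slightly stronger) inclusion $\Ker u\subset L_{\calV,x}$, whereas you extract the needed orthogonality $b(x,v(x))=0$ by a direct computation with the $b$-symmetry identity; the underlying content is the same.
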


\begin{proof}
Let $x \in \calV^\bullet \setminus \{0\}$ and choose $u \in \calV$ such that $x \in \im u^{p-1}$.
By Proposition \ref{tangentprop}, $\calV x \subset \im u$, whence $\F x \oplus \calV x \subset \im u$.
Taking the orthogonal complement and applying the Strong Orthogonality Lemma (Lemma \ref{strongorthoLemma}),
we deduce that $\Ker u=(\im u)^\bot \subset L_{\calV,x}$, and in particular $x \otimes_b x \in \calV$
since $x \in \Ker u$.
\end{proof}

Now, we use a \emph{reductio ad absurdum}, by assuming that $\calV x \neq \{0\}$ for all $x \in \calV^\bullet$.
The Reduction Lemma yields
$$\text{(H) :} \qquad \forall x \in \calV^\bullet \setminus \{0\}, \; K(\calV) \not\subset \F x \oplus \calV x.$$

\begin{claim}
The space $K(\calV)$ is totally $b$-singular.
\end{claim}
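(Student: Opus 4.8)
The plan is to show that every vector $x \in \calV^\bullet \setminus \{0\}$ is $b$-isotropic and $b$-orthogonal to all of $\calV^\bullet$, which gives that $K(\calV) = \Vect(\calV^\bullet)$ is totally $b$-singular. First I would exploit Claim \ref{symmetrictensorclaim}: for $x \in \calV^\bullet \setminus \{0\}$ we have $x \otimes_b x \in \calV$. Since $x \otimes_b x$ is nilpotent, its trace $2\,b(x,x)$ must vanish (it is an element of a nilpotent space), so $b(x,x) = 0$; thus every vector of $\calV^\bullet$ is $b$-isotropic. This already places $\calV^\bullet$ inside a quadric, but I need the stronger linear statement.

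Next I would take two vectors $x, y \in \calV^\bullet \setminus \{0\}$ and aim to show $b(x,y) = 0$. Pick $u \in \calV$ with $x \in \im u^{p-1}$. Applying Claim \ref{symmetrictensorclaim} to $x$ gives $x \otimes_b x \in \calV$; now I would feed this tensor into the Orthogonality Lemma for Tensors (Proposition \ref{ortholemmatensors}), which with $y$ replaced by $x$ and $v$ an arbitrary element of $\calV$ yields $b\bigl(x, v^k(x)\bigr) = 0$ for every odd $k < |\F|$ (here $\F$ is infinite, so this covers all odd $k$). But I want to compare $x$ with a second generic vector $y$, not just with powers applied to $x$. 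The natural move is to pick $v \in \calV$ with $y \in \im v^{p-1}$ and then use the polynomial deformation trick that already appears in Lemma \ref{basicLDL} and Proposition \ref{tangentprop}: for $z \notin \Ker u^{p-1} \cup \Ker v^{p-1}$, the curve $\lambda \mapsto \bigl((1-\lambda)u + \lambda v\bigr)^{p-1}(z)$ takes values in $\calV^\bullet$, with value at $0$ collinear with $x$ and value at $1$ collinear with $y$. Combining this with Claim \ref{symmetrictensorclaim} (so that $w \otimes_b w \in \calV$ for every $w = \gamma(\lambda)$ on this curve) and the Strong Orthogonality Lemma should force $b(x,y) = 0$: indeed $\Ker u \subset L_{\calV,x}$ from the proof of Claim \ref{symmetrictensorclaim}, and $L_{\calV,x}$ is $b$-orthogonal to $\F x \oplus \calV x$, so it suffices to locate $y$ inside $L_{\calV,x}$, or equivalently to show $y$ is $b$-orthogonal to $\F x \oplus \calV x$.

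Here is where hypothesis (H) must enter, and I expect this to be the technical heart of the argument. Assumption (H) says $K(\calV) \not\subset \F x \oplus \calV x$ for every $x \in \calV^\bullet \setminus \{0\}$; equivalently (taking orthogonal complements and using the Strong Orthogonality Lemma, $(\F x \oplus \calV x)^\bot = L_{\calV,x}$ while $K(\calV)^\bot$ is fixed) this gives a nontrivial constraint relating $L_{\calV,x}$ to $K(\calV)^\bot$ for all $x$. The plan is: using the Linear Density Lemma (Lemma \ref{LinearDensityLemma}) — applicable since $\calV$ is pure and $\F$ infinite — choose a basis of $K(\calV)$ consisting of vectors $x_1, \dots, x_m \in \calV^\bullet$ avoiding any finite list of proper subspaces we care about. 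For each $x_i$, the containment $\Ker u_i \subset L_{\calV,x_i}$ (where $u_i \in \calV$ realizes $x_i$ in its image of $u_i^{p-1}$) together with $L_{\calV,x_i} \subset (\F x_i \oplus \calV x_i)^\bot$ and, crucially, the fact that $x_j \in \im u_i$ is false unless things collapse, should be leveraged via (H) to conclude that each $x_j$ lies in $L_{\calV,x_i}$, hence $b(x_i, x_j) = 0$. Varying $i, j$ over the basis yields that $b$ vanishes identically on $K(\calV) \times K(\calV)$, i.e. $K(\calV)$ is totally $b$-singular. The main obstacle I anticipate is correctly organizing the bookkeeping in this last step: making sure the deformation curves stay inside $\calV^\bullet$, that the density lemma provides enough generic vectors simultaneously avoiding all the relevant kernels and hyperplanes, and above all extracting the orthogonality $b(x_i, x_j) = 0$ from (H) rather than merely from (H)'s negation at a single vector — (H) is a statement quantified over all $x$, and the argument must genuinely use that uniformity.
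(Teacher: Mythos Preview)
Your proposal has a genuine gap, and the paper's route is both shorter and conceptually cleaner.

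First, a minor point: since $b$ is symplectic here, every vector is $b$-isotropic, so your opening paragraph proves nothing (it is not wrong, just vacuous). The content of the claim is the pairwise orthogonality $b(x,y)=0$ for $x,y \in \calV^\bullet$.

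The real problem is your plan to show $y \in L_{\calV,x}$, i.e.\ $x \otimes_b y \in \calV$. There is no reason this should hold; Claim~\ref{symmetrictensorclaim} only gives you the \emph{diagonal} tensors $x \otimes_b x$, and nothing in (H), the Strong Orthogonality Lemma, or the deformation curves produces the off-diagonal tensor $x \otimes_b y$. Your attempt to reach $y \in L_{\calV,x}$ via ``$x_j \in \im u_i$ is false unless things collapse'' and then ``leverage (H)'' is where the argument dissolves: (H) is a \emph{non}-inclusion, and it cannot by itself force a vector into $L_{\calV,x_i}$.

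The idea you are missing is the \emph{action} of the diagonal tensors. For $x,y \in \calV^\bullet \setminus\{0\}$ with $b(x,y)\neq 0$, compute
\[
(y \otimes_b y)(x)=2\,b(y,x)\,y,
\]
which is a nonzero multiple of $y$; since $y \otimes_b y \in \calV$ by Claim~\ref{symmetrictensorclaim}, this puts $y \in \calV x$ directly. The paper exploits this by contradiction: if $K(\calV)$ is not totally singular, the Linear Density Lemma (applied twice) produces a basis $(e_1,\dots,e_q)$ of $K(\calV)$ in $\calV^\bullet \setminus K(\calV)^\bot$ and then a vector $x \in \calV^\bullet \setminus\{0\}$ with $b(x,e_i)\neq 0$ for every $i$. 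The computation above gives $e_i \in \calV x$ for all $i$, hence $K(\calV)\subset \calV x \subset \F x \oplus \calV x$, contradicting~(H). No deformation curves, no Orthogonality Lemma for Tensors, and no attempt to locate $y$ inside $L_{\calV,x}$ are needed.
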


\begin{proof}
Assume otherwise. Then, $K(\calV) \cap K(\calV)^\bot$ is a proper linear subspace of $K(\calV)$.
Hence, the Linear Density Lemma (Lemma \ref{LinearDensityLemma}) yields a basis
$(e_1,\dots,e_q)$ of $K(\calV)$ made of vectors of $\calV^\bullet \setminus K(\calV)^\bot$.
Applying the Linear Density Lemma once more, we find an additional vector $x \in \calV^\bullet \setminus \{0\}$
such that $b(x,e_i) \neq 0$ for all $i \in \lcro 1,q\rcro$.

Now, let $i \in \lcro 1,q\rcro$. Then, $(e_i \otimes_b e_i)(x)=2\,b(e_i,x)\,e_i$ is non-zero
and collinear with $e_i$. Thus, $e_i \in \calV x$ by Claim \ref{symmetrictensorclaim}. We deduce that $K(\calV) \subset \calV x$,
which contradicts property (H).
\end{proof}

\subsection{When the generic nilindex is greater than $2$ (part 2)}\label{endaltsym}

Now, we are ready for the final contradiction.
Choose a basis $(x_1,\dots,x_q)$ of $K(\calV)$ made of vectors of $\calV^\bullet$.

Let $i \in \lcro 1,q\rcro$. By the Strong Orthogonality Lemma, we use the non-inclusion
$K(\calV) \not\subset \F x_i \oplus \calV x_i$ to obtain a vector
$z_i \in V$ such that $x_i \otimes_b z_i \in \calV$ and $z_i \not\in K(\calV)^\bot$.
Applying the Linear Density Lemma once more, we find a vector $y \in \calV^\bullet \setminus \{0\}$ such that
$b(z_i,y) \neq 0$ for all $i \in \lcro 1,q\rcro$.
Hence, for all $i \in \lcro 1,q\rcro$, we have $(x_i \otimes_b z_i)(y)=b(z_i,y)\, x_i$ (because $b(x_i,y)=0$ by the total singularity of $K(\calV)$)
and we deduce that $x_i \in \calV y$. Hence $y$ satisfies condition (C) of the Reducibility Lemma, contradicting property (H).
This final contradiction completes the proof.

\section{Spaces of nilpotent alternating endomorphisms for a symmetric bilinear form}\label{altproofsection}

In this ultimate section, we prove Theorem \ref{alttheo}.
By Proposition \ref{reductiontoinfinitealt}, we know that it suffices to do so when the underlying field
is infinite. We fix such a field $\F$ (with characteristic not $2$), and we prove the result by induction on the dimension of the vector space $V$ under consideration. Let $V$ be a finite-dimensional vector space with dimension $n>0$, and let $b$ be a non-degenerate symmetric bilinear form on $V$, whose Witt index we denote by $\nu$.
Let $\calV$ be a nilpotent subspace of $\calA_b$ with dimension $\nu(n-\nu-1)$. If $\nu=0$ then $\calV=\{0\}$, which is the claimed result.
Assume now that $\nu>0$. Combining the Lifting Lemma and the induction hypothesis, we see that it suffices to prove the existence of
a non-zero $b$-isotropic vector $x$ of $V$ such that $\calV x=\{0\}$.
The overall strategy of the proof is similar to the one of the previous section, but the details
are much more difficult.

Assume first that the generic nilindex of $\calV$ equals $2$. Then, as in Section \ref{nilindex2section}, we find that the elements
of $\calV$ skew-commute pairwise. Moreover, we can take $u_0 \in \calV$ with nilindex $2$,
so that $\im u_0$ is a non-trivial subspace of $V$, and it is totally $b$-singular because
$\im u_0\subset \Ker u_0=(\im u_0)^\bot$. The elements of $\calV$ induce nilpotent endomorphisms of $\im u_0$
that skew-commute pairwise, and with the same line of reasoning as in Section \ref{nilindex2section} we deduce that some
non-zero vector $x$ of $\im u_0$ satisfies $\calV x=\{0\}$. The vector $x$ is $b$-isotropic because
it belongs to $\im u_0$, and hence the proof is complete in that case.

In the remainder of the proof, we assume that the generic nilindex of $\calV$ is greater than $2$.
In the next section, we obtain additional information on the generic nilindex of $\calV$.
We finish the present one with a basic, yet very useful result:

\begin{claim}
Every vector of $\calV^\bullet$ is isotropic.
\end{claim}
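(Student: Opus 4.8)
The plan is to recognise this claim as a repackaging of the fact that, for an element $u \in \calV$ of maximal nilindex $p := \ind(\calV)$, the subspace $\im u^{p-1}$ is totally $b$-singular; since $\calV^\bullet$ is by definition the union of the subspaces $\im v^{p-1}$ over $v \in \calV$ (those $v$ with $\ind(v) < p$ contributing only $\{0\}$), this immediately yields that every vector of $\calV^\bullet$ is isotropic.

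So I would fix $u \in \calV$. First, $u^p = 0$ because $\ind(u) \leq \ind(\calV) = p$. Since the generic nilindex is at least $3$ in the case at hand, we have $2p - 2 \geq p$, hence $u^{2p-2} = 0$; in other words $u^{p-1}$ annihilates $\im u^{p-1}$, i.e. $\im u^{p-1} \subset \Ker u^{p-1}$. Next, as $u^{p-1} \in \calS_b \cup \calA_b$, the observation following Lemma~\ref{stableortholemma} gives $\im u^{p-1} = (\Ker u^{p-1})^\bot$, equivalently (by non-degeneracy of $b$ and finite-dimensionality of $V$) $\Ker u^{p-1} = (\im u^{p-1})^\bot$. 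Combining the two facts, $\im u^{p-1} \subset (\im u^{p-1})^\bot$, so $\im u^{p-1}$ is totally $b$-singular and in particular consists of isotropic vectors. Letting $u$ range over $\calV$ and recalling the definition of $\calV^\bullet$ concludes the argument.

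Alternatively, and even more directly, for $x = u^{p-1}(y) \in \calV^\bullet$ one can use that $b$ is symmetric and $u$ is $b$-alternating — so that $b(u(a),c) = -b(a,u(c))$ for all $a,c$, hence $b(u^{p-1}(a),c) = (-1)^{p-1}\,b(a,u^{p-1}(c))$ — to get $b(x,x) = (-1)^{p-1}\,b\bigl(y,u^{2p-2}(y)\bigr) = 0$, again because $u^{2p-2} = 0$. I do not expect any genuine difficulty here: the only point worth noting is the inequality $2p - 2 \geq p$, which forces $u^{2p-2}$ to vanish; everything else is the already-established interplay between kernels and images of $b$-symmetric and $b$-alternating operators.
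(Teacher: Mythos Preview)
Your argument is correct and follows essentially the same route as the paper. The paper's proof is marginally slicker: it observes that $x \in \im u^{p-1}$ lies in both $\Ker u$ and $\im u = (\Ker u)^\bot$, which works for any $p \geq 1$ and avoids the detour through $u^{2p-2}=0$; your version instead uses $\im u^{p-1} \subset \Ker u^{p-1} = (\im u^{p-1})^\bot$, which needs only $p \geq 2$ (not the $p \geq 3$ you invoke), but the underlying idea is identical.
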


\begin{proof}
Denote by $p$ the generic nilindex of $\calV$. Let $x \in \calV^\bullet$. Then, for some $u \in \calV$, we have
$x \in \im u^{p-1}$, and hence $x$ belongs both to $\Ker u$ and to $\im u=(\Ker u)^\bot$. This yields the claimed result.
\end{proof}

\subsection{The generic nilindex}

\begin{claim}\label{minindex}
The generic nilindex of $\calV$ is at least $2\nu-1$ if $n\neq 2\nu$, and at least $n-3$ otherwise.
\end{claim}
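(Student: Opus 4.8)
The plan is to mimic the strategy from Section~\ref{symproofsection}: pick an arbitrary non-zero isotropic vector $x \in V$, pass to the quotient $\{x\}^\bot/\F x$ equipped with the induced non-degenerate symmetric form $\overline{b}$, and apply the Preservation of Maximality Lemma (Lemma~\ref{preservemaxLemma}) to learn that $\calV \modu x$ is a nilpotent subspace of $\calA_{\overline b}$ with the maximal dimension $(\nu-1)\bigl((n-2)-(\nu-1)-1\bigr)$. By the induction hypothesis of Theorem~\ref{alttheo}, this forces $\calV \modu x = \WA_{\overline b,\calG}$ for some maximal partially complete $\overline b$-singular flag $\calG$ of $\{x\}^\bot/\F x$. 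Then I would invoke Lemma~\ref{genericnilindexalt} (applied to the form $\overline b$, whose Witt index is $\nu-1$ and whose space has dimension $n-2$) to extract an element $\overline v \in \calV \modu x$ with large nilindex, and lift it to an element $v$ of $\calV$ annihilating $x$, so that $\ind(v) \geq \ind(\overline v)$.

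The bookkeeping splits according to which case of Lemma~\ref{genericnilindexalt} applies to $\overline b$. If $n - 2 \neq 2(\nu-1)$, i.e.\ $n \neq 2\nu$, then the generic nilindex of $\WA_{\overline b,\calG}$ is $2(\nu-1)+1 = 2\nu-1$, yielding $v \in \calV$ with $\ind(v) \geq 2\nu - 1$, which is the first desired bound. If $n - 2 = 2(\nu-1)$, i.e.\ $n = 2\nu$, then the generic nilindex of $\WA_{\overline b,\calG}$ is $(n-2)-1 = n-3$, yielding $v \in \calV$ with $\ind(v) \geq n-3$, which is the second desired bound. One subtlety here is that the induction hypothesis has a cardinality requirement: to apply Theorem~\ref{alttheo} to $\{x\}^\bot/\F x$ I need $|\F| \geq 2(\nu-1)+1 = 2\nu-1$ when $n \neq 2\nu$, and $|\F| \geq (n-2)-1 = n-3$ when $n = 2\nu$; both follow from the standing hypothesis $|\F| \geq \min(n, 2\nu+1)$ since $\F$ is moreover infinite in this section, so that point is not a genuine obstruction.

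The main obstacle, such as it is, is making sure that the lift preserves the nilindex: an element $\overline v \in \calV \modu x$ of nilindex $k$ comes from some $v \in \calU_{\calV,x} \subset \calV$ with $v(x) = 0$ and $\overline{v} = v \bmod \F x$ acting on $\{x\}^\bot/\F x$; since $v$ stabilises $\F x \subset \{x\}^\bot$ and $\overline v^{\,k-1} \neq 0$ in $\End(\{x\}^\bot/\F x)$, there is $w \in \{x\}^\bot$ with $v^{k-1}(w) \notin \F x$, whence $v^{k-1}(w) \neq 0$ and so $\ind(v) \geq k$. This is the same elementary argument used in the proof of the analogous claim in Section~\ref{symproofsection}. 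Everything else is a matter of assembling the quoted lemmas, so the claim follows once these pieces are in place.
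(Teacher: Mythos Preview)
Your proposal is correct and follows essentially the same approach as the paper: choose a non-zero isotropic vector $x$, apply Lemma~\ref{preservemaxLemma} and the induction hypothesis to identify $\calV \modu x$ as $\WA_{\overline b,\calG}$, then use Lemma~\ref{genericnilindexalt} on the $(n-2)$-dimensional quotient with Witt index $\nu-1$ to extract an element of the required nilindex and lift it. Your write-up is in fact more explicit than the paper's on two points (the cardinality check, which is moot since the field is assumed infinite in this section, and the elementary lifting argument showing $\ind(v) \geq \ind(\overline v)$), but the underlying argument is identical.
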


\begin{proof}
Choose an arbitrary isotropic vector $x \in V \setminus \{0\}$.
Denote by $\overline{b}$ the non-degenerate symmetric bilinear form induced by $b$ on $\{x\}^\bot/\F x$. Its Witt index equals $\nu-1$.
By Lemma \ref{preservemaxLemma}, the subspace $\calV \modu x$ has dimension $(\nu-1)\bigl((n-2)-(\nu-1)-1\bigr)$
and it is a nilpotent subspace of $\calA_{\overline{b}}$. Hence, by induction $\calV \modu x=\WA_{\overline{b},\calG}$
for some maximal partially complete $\overline{b}$-singular flag $\calG$ of $\{x\}^\bot/\F x$.
By Lemma \ref{genericnilindexalt} there is an element $v \in \calV \modu x$ with nilindex $n-3$ if $n-2=2(\nu-1)$,
and with nilindex $2\nu-1$ otherwise. This proves the claimed statement.
\end{proof}

From now on, the generic nilindex of $\calV$ is denoted by $p$.

\begin{claim}\label{Jordanstruct1}
Assume that $\calV$ is pure, and let $u \in \calV$ be with nilindex $p$. Then, $p$ is odd and:
\begin{itemize}
\item Either $u$ has one Jordan cell of size $p$, one Jordan cell of size $3$, and all its other Jordan cells have size $1$.
\item Or $u$ has one Jordan cell of size $p$, and all its other Jordan cells have size~$1$.
\end{itemize}
\end{claim}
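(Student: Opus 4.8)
The plan is to read off the Jordan type of $u$ from a $b$-orthogonal decomposition into indecomposables and then squeeze it using the available rank bounds. First I would record that purity forces $\rk u^{p-1}=1$: it is at most $1$ by purity, and at least $1$ since $\ind(u)=p$. Hence $u$ has exactly one Jordan cell of size $p$. Writing $\lambda_1=p>\lambda_2\geq\cdots$ for the Jordan sizes of $u$, we then have $\rk u=\sum_i(\lambda_i-1)$, and our task is to show that $p$ is odd and that every $\lambda_i$ with $i\geq 2$ equals $1$, with at most one exception which must then equal $3$.

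Next I would decompose $V$ as a $b$-orthogonal direct sum of $u$-stable indecomposable subspaces. This is possible because $b$ is non-degenerate: whenever a $u$-stable subspace admits a $u$-stable complement it is automatically $b$-regular (its complement is then forced to be its orthogonal complement), so one may peel off $b$-regular $u$-stable summands recursively. By Proposition \ref{Jordansymalt}, each summand is either a single Jordan cell of odd size or a direct sum of two Jordan cells of the same even size, and on a summand of dimension $d$ the restriction of $b$ has Witt index $\lfloor d/2\rfloor$. From this I would extract two facts: every even value occurring among the $\lambda_i$ occurs with even multiplicity (since such cells can only come from ``pair'' summands, each contributing two), and the $b$-orthogonal union of totally singular subspaces of the summands shows $\nu$ is at least the sum of the quantities $\lfloor d/2\rfloor$ over all summands. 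The first fact, together with the fact that $p$ has multiplicity $1$, already forces $p$ to be odd.

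Then I would bring in the quantitative bounds. By Corollary \ref{MaxRankNil}, $\rk u\leq 2\nu$, and moreover $\rk u\leq n-2$ when $n=2\nu$; by Claim \ref{minindex}, $p\geq 2\nu-1$ if $n\neq 2\nu$ and $p\geq n-3$ if $n=2\nu$. Since the size-$p$ cell contributes $p-1$ to the rank, in both cases one gets
$$\sum_{i\geq 2}(\lambda_i-1)=\rk u-(p-1)\leq 2.$$
Combining this with ``even values have even multiplicity'' and the fact that no $\lambda_i$ with $i\geq 2$ equals $p$, the multiset $\{\lambda_i\}_{i\geq 2}$ can only be: all $1$'s; or one $3$ and the rest $1$'s; or two $2$'s and the rest $1$'s. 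The first two possibilities are exactly the two alternatives of the claim, so everything comes down to excluding the third, i.e.\ ruling out Jordan type $(p,2,2,1^{m})$.

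The exclusion of type $(p,2,2,1^{m})$ is the main obstacle, and I would handle it in two cases. If $n\neq 2\nu$, then $\rk u=p+1\leq 2\nu$ gives $p\leq 2\nu-1$, which together with $p\geq 2\nu-1$ forces $p=2\nu-1$; the summand carrying the size-$p$ cell is then a single odd Jordan cell of dimension $p$, on which $b$ has Witt index $\tfrac{p-1}{2}=\nu-1$, while the summand carrying the two size-$2$ cells has dimension $4$ and Witt index $2$, so the superadditivity noted above yields $\nu\geq(\nu-1)+2=\nu+1$, which is absurd. If $n=2\nu$, then $\rk u=p+1\leq n-2=2\nu-2$ gives $p\leq 2\nu-3$, which together with $p\geq n-3=2\nu-3$ forces $p=2\nu-3$; but then $n=p+m+4=2\nu+1+m>2\nu=n$, again absurd. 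This eliminates the remaining case and completes the argument. (When $p=3$ the first alternative of the claim is vacuous under purity, and the argument simply lands in the second alternative; this causes no difficulty.)
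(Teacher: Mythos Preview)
Your argument is correct and follows essentially the same route as the paper: use Proposition~\ref{Jordansymalt} to force even-size cells to come in pairs (hence $p$ is odd), combine the rank bound from Corollary~\ref{MaxRankNil} with the lower bound on $p$ from Claim~\ref{minindex} to get $\sum_{i\geq 2}(\lambda_i-1)\leq 2$, and then rule out the type $(p,2,2,1^{m})$ via a Witt-index count. Your case split on whether $n=2\nu$ in the exclusion step is equivalent to the paper's derivation that $\varepsilon=0$ from $p+4\leq n$.

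One small correction: your justification for the $b$-orthogonal decomposition into indecomposables is not right as stated. It is \emph{not} true that ``whenever a $u$-stable subspace admits a $u$-stable complement it is automatically $b$-regular'' (take $u=0$ and any isotropic line, for instance). The existence of a $b$-orthogonal decomposition of $(V,b,u)$ into indecomposable pieces is true, but it comes from the classification theory invoked around Proposition~\ref{Jordansymalt} (or from a direct argument showing that any $b$-regular $u$-stable summand can be split off), not from the sentence you wrote. Since you only use the decomposition as a black box and Proposition~\ref{Jordansymalt} for the shape of the pieces, this does not affect the rest of your proof.
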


\begin{proof}
By Lemma \ref{Jordansymalt}, the number of Jordan cells of given even size for $u$
is even, and since $\calV$ is pure we get that $p$ is odd.

Set $\varepsilon:=1$ if $n=2\nu$, otherwise set $\varepsilon:=0$.
The combination of Corollary \ref{MaxRankNil} and of Claim \ref{minindex}
yields $2\nu-1-2\varepsilon \leq p \leq 2\nu+1-2\varepsilon$.
Moreover, $\rk u \leq 2\nu-2\epsilon$. This leaves us with three possibilities:
\begin{enumerate}[(i)]
\item $u$ has exactly one Jordan cell of size $p$, and all the other Jordan cells have size $1$;
\item $u$ has exactly one Jordan cell of size $p$, exactly one Jordan cell of size $3$, and all its other Jordan cells have size $1$;
\item $u$ has exactly one Jordan cell of size $p$, exactly two Jordan cells of size $2$, and all its other Jordan cells have size $1$.
\end{enumerate}
We need to discard case (iii). Assume on the contrary that it holds.
Then, $p+4 \leq n$, and hence $2\nu \leq p+3<n$, leading to $\epsilon=0$.
Besides, Proposition \ref{Jordansymalt} would yield
a decomposition $V=V_p \overset{\bot}{\bigoplus} V_2  \overset{\bot}{\bigoplus} V_1$ into a $b$-orthogonal direct sum
in which all the subspaces $V_p$, $V_2$ and $V_1$ are stable under $u$, the endomorphism of $V_p$ induced by $u$ is a Jordan cell of size $p$,
and the endomorphism of $V_2$ induced by $u$ is the direct sum of two Jordan cells of size $2$.
But then Proposition \ref{Jordansymalt} would also yield that the Witt index of $b$ is at least
$\frac{p-1}{2}+2 \geq \nu-1+2=\nu+1$, which is absurd.

Hence, only cases (i) and (ii) are possible.
\end{proof}

\begin{claim}\label{Jordanstruct2}
Assume that $\calV$ is not pure. Then,
$p=3$ and there is an element of $\calV$ that has exactly two Jordan cells of size $3$, and no Jordan cell of size $2$.
\end{claim}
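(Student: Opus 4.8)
The plan is to pin down the generic nilindex $p$ by a rank/nilindex squeeze, and then to read off the Jordan type of a non-pure element directly. Since $\calV$ is not pure, I would first fix $u \in \calV$ with $\rk u^{p-1} \geq 2$. Because $p=\ind(\calV)$ is the largest nilindex occurring in $\calV$ and $u^{p-1}\neq 0$, the endomorphism $u$ has nilindex exactly $p$; moreover $\rk u^{p-1}$ is precisely the number of Jordan cells of $u$ of size $p$, so $u$ has at least two such cells, whence $\rk u \geq 2(p-1)$.

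Next I would bound $p$ from above and from below. Set $\varepsilon:=1$ if $n=2\nu$ and $\varepsilon:=0$ otherwise. By Corollary \ref{MaxRankNil}, $\rk u \leq 2\nu$ in all cases and $\rk u \leq n-2=2\nu-2$ when $n=2\nu$; in either situation $\rk u\leq 2\nu-2\varepsilon$, so the inequality $\rk u\geq 2(p-1)$ yields $p\leq \nu-\varepsilon+1$. Since we are working under the standing hypothesis $p\geq 3$, this already forces $\nu\geq \varepsilon+2$. On the other hand, Claim \ref{minindex} (together with the identity $n-3=2\nu-3$ valid when $n=2\nu$) gives $p\geq 2\nu-1-2\varepsilon$, and combining this with $p\leq \nu-\varepsilon+1$ gives $\nu\leq \varepsilon+2$. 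Hence $\nu=\varepsilon+2$, and then $3\leq p\leq \nu-\varepsilon+1=3$, so $p=3$. Concretely, only two configurations survive: either $n\neq 2\nu$ with $\nu=2$, or $n=2\nu=6$.

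With $p=3$ in hand, I would finish by writing $k\geq 2$, $m$ and $\ell$ for the numbers of Jordan cells of $u$ of respective sizes $3$, $2$ and $1$, so that $\rk u=2k+m$. In either surviving configuration one has $\rk u\leq 2\nu-2\varepsilon=4$, so $2k+m\leq 4$ together with $k\geq 2$ forces $k=2$ and $m=0$. Thus $u$ itself is an element of $\calV$ with exactly two Jordan cells of size $3$ and no Jordan cell of size $2$, which is the desired conclusion.

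I do not expect a genuine obstacle here: once Corollary \ref{MaxRankNil} and Claim \ref{minindex} are available, the whole argument is elementary counting. The only point demanding a little care is the uniform treatment of the cases $n=2\nu$ and $n\neq 2\nu$ — the former relies on the sharper bounds $\rk u\leq n-2$ and $\ind(u)\leq n-1$ that are specific to $b$-alternating endomorphisms when $n=2\nu$ — and the bookkeeping parameter $\varepsilon$ is exactly what makes that uniform treatment possible.
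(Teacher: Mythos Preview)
Your proof is correct and follows essentially the same approach as the paper: both combine the rank bound $\rk u \leq 2\nu-2\varepsilon$ from Corollary~\ref{MaxRankNil} with the lower bound $p\geq 2\nu-1-2\varepsilon$ from Claim~\ref{minindex} and the fact that a non-pure $u$ has at least two Jordan cells of size $p$, then read off the Jordan structure by elementary counting. The only cosmetic difference is that you first squeeze out $p=3$ (and incidentally $\nu=\varepsilon+2$) before determining the cell multiplicities, whereas the paper packages everything into the single inequality $(k-1)(p-1)+l\leq 2$ and extracts $p=3$, $k=2$, $l=0$ simultaneously.
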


\begin{proof}
Choose $u \in \calV$ with nilindex $p$ and at least $2$ Jordan cells of size $p$.
Set $\varepsilon:=1$ if $n=2\nu$, otherwise set $\varepsilon:=0$.
Again, we have $2\nu-1-2\varepsilon \leq p \leq 2\nu+1-2\varepsilon$
and $\rk u \leq 2\nu-2\epsilon$.
Let $u \in \calV$ have nilindex $p$ and several Jordan cells of size $p$:
denote by $k$ the number of those cells, and by $l$ the number of cells
of size less than $p$ and more than $1$. Then, $\rk u \geq (p-1)+(k-1)(p-1)+l$,
whence $(k-1)(p-1)+l \leq 2$. As $p \geq 3$ and $k-1>0$, this yields $p=3$, $k=2$ and $l=0$, which proves the claimed statement.
\end{proof}

\subsection{A variation of Proposition \ref{tangentprop}}

\begin{claim}\label{orthotangentclaim}
Let $u \in \calV$ have nilindex $p$, and let $x \in \im u^{p-1} \setminus \{0\}$
and $v \in \calV$. Then, $v(x) \in u(\{x\}^\bot)$.
\end{claim}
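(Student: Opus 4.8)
The plan is to use the \emph{explicit} formula buried in the proof of Proposition \ref{tangentprop} rather than just its statement; since we are working over an infinite field, the hypothesis $|\F|\geq p$ of that proposition is automatically satisfied. Write $x=u^{p-1}(y)$ for some $y\in V$ and reuse the operator $u_1:=\sum_{k=0}^{p-2}u^k\,v\,u^{p-2-k}$ from that proof, for which it was shown there that $v(x)=-u\bigl(u_1(y)\bigr)$. Setting $z:=-u_1(y)$ we thus already have $u(z)=v(x)$, so the entire claim reduces to verifying that $z\in\{x\}^\bot$, i.e.\ that $b\bigl(x,u_1(y)\bigr)=0$.

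To prove this, I would use twice the fact that every element $w$ of $\calA_b$ is $b$-skew-adjoint, namely $b(a,w(c))=-b(w(a),c)$ for all $a,c\in V$ (a consequence of $b$ being symmetric and $(a,c)\mapsto b(a,w(c))$ being alternating), together with $u^p=0$. First, expanding $b\bigl(u^{p-1}(y),u_1(y)\bigr)$ and sliding each power of $u$ from the right argument to the left, every summand indexed by $k\geq 1$ acquires a factor $u^{p-1+k}(y)=0$; hence only the $k=0$ term survives and $b(x,u_1(y))=b\bigl(x,v(u^{p-2}(y))\bigr)$. Second, moving $v$ across $b$, then substituting $v(x)=-u(u_1(y))$, then moving one power of $u$ back across $b$, gives
\begin{align*}
b\bigl(x,v(u^{p-2}(y))\bigr)&=-b\bigl(v(x),u^{p-2}(y)\bigr)=b\bigl(u(u_1(y)),u^{p-2}(y)\bigr)\\
&=-b\bigl(u_1(y),u^{p-1}(y)\bigr)=-b(x,u_1(y)).
\end{align*}
Comparing the two identities yields $2\,b(x,u_1(y))=0$, hence $b(x,u_1(y))=0$ since $\car\F\neq 2$. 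Therefore $z\in\{x\}^\bot$ and $v(x)=u(z)\in u(\{x\}^\bot)$, which is the desired conclusion.

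The one point requiring care is the sign bookkeeping when transferring operators across $b$; one might worry about parity issues, but in fact the parity of $p$ (which is odd here, by Claims \ref{Jordanstruct1} and \ref{Jordanstruct2}) is irrelevant, each of the two computations producing exactly one net sign change relative to $b(x,u_1(y))$. I do not foresee any genuine obstacle: once the explicit preimage $z=-u_1(y)$ is identified, the argument is a short self-contained computation relying only on skew-adjointness, the nilpotency of $u$, and $\car\F\neq 2$ (it does not even use $x\neq 0$).
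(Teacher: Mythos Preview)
Your proof is correct. Both you and the paper identify the preimage $-u_1(y)$ (the paper calls it $-x_1$) and reduce the claim to showing $b(x,u_1(y))=0$, but you reach that orthogonality by a different route. The paper observes that $\gamma(\lambda):=(u+\lambda v)^{p-1}(y)$ lies in $\im(u+\lambda v)\cap\Ker(u+\lambda v)$ and is therefore $b$-isotropic for every $\lambda$; expanding $b(\gamma(\lambda),\gamma(\lambda))\equiv 0$ and reading off the linear coefficient immediately gives $2b(x_0,x_1)=0$. Your argument instead works directly on the single quantity $b(x,u_1(y))$: you kill all summands with $k\geq 1$ by pushing $u^k$ across, and then use the already-established identity $v(x)=-u(u_1(y))$ to show the surviving term equals its own negative. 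The paper's argument is slicker and more conceptual (one global isotropy observation rather than term-by-term manipulation), while yours is more hands-on but entirely self-contained and avoids any appeal to polynomial-identity reasoning. One small quibble: your closing remark that ``each of the two computations produc[es] exactly one net sign change'' is slightly off---the first computation is an equality with no sign change, and it is the second that introduces the minus sign---but the displayed chain of equalities is correct as written.
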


\begin{proof}
Choose $y \in V$ such that $u^{p-1}(y)=x$.
For $\lambda \in \F$, set
$$\gamma(\lambda):=(u+\lambda v)^{p-1}(y)=\sum_{k=0}^{p-1} \lambda^k x_k,$$
where $x_0,\dots,x_{p-1}$ all belong to $V$ and $x_0=x$.

We have seen in the proof of Proposition \ref{tangentprop} that
$v(x)=u(-x_1)$. For all $\lambda \in \F$, the vector $\gamma(\lambda)$ is $b$-isotropic because it belongs both to the kernel and to the range of the $b$-alternating endomorphism $u+\lambda v$; hence
$b(\gamma(\lambda),\gamma(\lambda))=0$.
Differentiating this polynomial function at zero yields $2\,b(x_0,x_1)=0$, and it follows that $-x_1 \in \{x\}^\bot$,
which yields the claimed statement.
\end{proof}

\subsection{Investigating the structure of $\F x \oplus \calV x$}

\begin{claim}\label{stabu2}
Let $u \in \calV$ and $x \in \im u^{p-1}$. Then, $\calV x$ is stable under $u^2$.
\end{claim}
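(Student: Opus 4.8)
The plan is to exploit the stability of $\calV$ under the ``symmetrized cube'' recorded in Corollary \ref{cubescor}: for every $v \in \calV$, the operator $w := u^2 v + uvu + vu^2$ again lies in $\calV$. (The hypothesis $|\F|>3$ needed there costs nothing, since in this section $\F$ is infinite.) The whole point will then be to evaluate $w$ at the vector $x$ and to observe that two of the three summands vanish.

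First I would note that $u$ annihilates $x$. Write $x = u^{p-1}(y)$ for some $y \in V$. Since $p = \ind(\calV)$ is by definition the greatest nilindex among the elements of $\calV$, in particular $u^p = 0$, so $u(x) = u^p(y) = 0$, and hence also $u^2(x)=0$. Consequently $(uvu)(x) = u\bigl(v(u(x))\bigr) = 0$ and $(vu^2)(x) = v\bigl(u^2(x)\bigr) = 0$, whereas $(u^2v)(x)=u^2\bigl(v(x)\bigr)$; therefore $w(x) = u^2\bigl(v(x)\bigr)$.

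Since $w \in \calV$, this gives $u^2\bigl(v(x)\bigr) = w(x) \in \calV x$. Letting $v$ range over $\calV$, the vectors $v(x)$ fill up all of $\calV x$, so $u^2(\calV x) \subset \calV x$, i.e.\ $\calV x$ is stable under $u^2$, as claimed. There is essentially no obstacle here beyond having Corollary \ref{cubescor} available: the only point deserving a moment's care is the identity $u(x)=0$, which holds precisely because $p$ bounds the nilindex of every element of $\calV$, hence that of $u$ itself.
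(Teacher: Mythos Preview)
Your proof is correct and follows exactly the same approach as the paper: invoke Corollary \ref{cubescor} to get $w=u^2v+uvu+vu^2\in\calV$, then use $x\in\Ker u$ to reduce $w(x)$ to $u^2(v(x))$. The paper's version is simply more terse, recording only ``because $x\in\Ker u$'' where you spell out the justification via $u^p=0$.
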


\begin{proof}
Let $v \in \calV$. By Corollary \ref{cubescor}, the endomorphism $w:=u^2 v+uvu+vu^2$ belongs to $\calV$, and we see that $u^2(v(x))=w(x)$
because $x \in \Ker u$.
\end{proof}

\begin{claim}\label{wedgeclaim}
Let $x \in V$ be isotropic and non-zero. Assume that
$(\F x \oplus \calV x)^\bot$ contains a non-isotropic vector.
Then, for all $v \in \calV$, the subspace $\calV$ contains $x \wedge_b v(x)$.
\end{claim}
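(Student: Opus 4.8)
The plan is to reduce the statement to the single scalar identity $b\bigl(x,v^2(x)\bigr)=0$ for every $v\in\calV$, and then to extract that identity from the Orthogonality Lemma for Tensors by perturbing $v$ with a well-chosen $b$-alternating tensor built from the non-isotropic vector.

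First I would reformulate the goal. Applying the Strong Orthogonality Lemma (Lemma~\ref{strongorthoLemma}) to the non-zero isotropic vector $x$, we get $L_{\calV,x}=(\F x\oplus \calV x)^\bot$, so that $x\wedge_b v(x)\in\calV$ is equivalent to $v(x)\in(\F x\oplus\calV x)^\bot$. Since $v\in\calA_b$ we already have $b\bigl(v(x),x\bigr)=0$ and $b\bigl(v(x),w(x)\bigr)=b\bigl(w(x),v(x)\bigr)$ for all $w\in\calV$; as the characteristic is not $2$, it therefore suffices by polarization to prove that $b\bigl(v(x),v(x)\bigr)=0$ for all $v\in\calV$, i.e.\ (using $b\bigl(v(x),v(x)\bigr)=-b\bigl(x,v^2(x)\bigr)$, which holds because $v\in\calA_b$) that $b\bigl(x,v^2(x)\bigr)=0$ for all $v\in\calV$.

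Next I would bring in the non-isotropic vector $z\in(\F x\oplus\calV x)^\bot$. By the Strong Orthogonality Lemma again, $x\wedge_b z\in\calV$, hence $w:=v+x\wedge_b z\in\calV$ for every $v\in\calV$. The heart of the argument is the computation of the iterates of $w$ at $x$. Since $(x\wedge_b z)(x)=b(z,x)\,x-b(x,x)\,z=0$ (using $z\perp x$ and $x$ isotropic), one gets $w(x)=v(x)$; using moreover $b\bigl(z,v(x)\bigr)=0$ (as $z\perp\calV x$) and $b\bigl(x,v(x)\bigr)=0$, one gets $w^2(x)=v^2(x)$; and finally
$$w^3(x)=v^3(x)+b\bigl(z,v^2(x)\bigr)\,x-b\bigl(x,v^2(x)\bigr)\,z,$$
so the quantity we want to annihilate appears as the coefficient of $z$. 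Now I would apply the Orthogonality Lemma for Tensors (Proposition~\ref{ortholemmatensors}) with the odd integer $k=3$ — legitimate since $\F$ is infinite — to the tensor $x\wedge_b z\in\calV$ together with $w\in\calV$, and then with $v\in\calV$: this yields $b\bigl(z,w^3(x)\bigr)=0$ and $b\bigl(z,v^3(x)\bigr)=0$. Pairing the displayed identity with $z$ and using $b(z,x)=0$ and $b(z,z)\neq 0$ then forces $b\bigl(x,v^2(x)\bigr)=0$, which, by the reduction of the first paragraph, proves that $v(x)\in(\F x\oplus\calV x)^\bot=L_{\calV,x}$ for all $v\in\calV$, i.e.\ $x\wedge_b v(x)\in\calV$.

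The one genuinely delicate point is the choice of the auxiliary element $w=v+x\wedge_b z$. The Orthogonality Lemma for Tensors only constrains \emph{odd} powers $v^k(x)$, whereas the obstruction $b\bigl(x,v^2(x)\bigr)$ involves a square; the trick is to hide $v^2(x)$ inside a cube, and adding the $b$-alternating tensor $x\wedge_b z$ is exactly what makes $w^3(x)$ acquire a $z$-component proportional to $b\bigl(x,v^2(x)\bigr)$ while leaving $w(x)$ and $w^2(x)$ equal to $v(x)$ and $v^2(x)$. Verifying that these lower iterates are really unchanged — so that no parasitic terms contaminate the formula for $w^3(x)$ — is where the hypotheses $z\perp x$ and $z\perp\calV x$ are used, and it is the only computation that needs to be carried out with care.
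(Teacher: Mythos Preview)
Your argument is correct and takes a genuinely different route from the paper's.

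The paper does not pass through the Strong Orthogonality Lemma at all. Instead it fixes a non-isotropic $y\in(\F x\oplus\calV x)^\bot$, sets $u:=x\wedge_b y\in\calV$ and $\alpha:=b(y,y)$, and computes directly that $u^2=-\tfrac{\alpha}{2}\,x\otimes_b x$, that $(x\otimes_b x)v+v(x\otimes_b x)=-2\,x\wedge_b v(x)$, and that $uvu=b\bigl(y,v(x)\bigr)\,u$. Feeding these identities into Corollary~\ref{cubescor} (the consequence $u^2v+uvu+vu^2\in\calV$ of stability under cubes) gives $\alpha\,x\wedge_b v(x)+b\bigl(y,v(x)\bigr)\,u\in\calV$, and since $u\in\calV$ and $\alpha\neq 0$ the conclusion drops out. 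So the paper produces the element $x\wedge_b v(x)$ \emph{explicitly} as a combination of elements already known to be in $\calV$, without ever reformulating membership as an orthogonality condition.

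Your approach instead converts the membership $x\wedge_b v(x)\in\calV$ into the scalar equation $b\bigl(x,v^2(x)\bigr)=0$ via the Strong Orthogonality Lemma and polarization, and then extracts that scalar from the Orthogonality Lemma for Tensors at $k=3$ via the clever perturbation $w=v+x\wedge_b z$. Both proofs ultimately rest on the same ``cubic'' input (the paper through Corollary~\ref{cubescor}, you through the odd exponent $k=3$ in Proposition~\ref{ortholemmatensors}, each requiring $|\F|>3$). What your route buys is that it sidesteps the operator identities for $u^2$ and $uvu$ in favour of a single scalar computation; what the paper's route buys is self-containment---it never appeals to the Strong Orthogonality Lemma and yields the desired element of $\calV$ constructively.
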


\begin{proof}
Let $v \in \calV$.
By assumption and by the Strong Orthogonality Lemma, there is a non-isotropic vector $y$ of $\{x\}^\bot$ such that
$x \wedge_b y \in \calV$.
Set $u:=x \wedge_b y$ and $\alpha:=b(y,y)$, the latter of which is non-zero.
One computes that $u^2=-\frac{\alpha}{2} x \otimes_b x$.
Another computation shows that
$$v \circ (x \otimes_b x)+(x \otimes_b x)\circ v=-2\,x \wedge_b v(x).$$
Finally,
$$uvu=b(y,v(x))\,u.$$
Applying Corollary \ref{cubescor}, we deduce that the
operator $\alpha\, x \wedge_b v(x)+b(y,v(x))\,u$ belongs to the linear subspace $\calV$.
Since $\alpha \neq 0$ and $u \in \calV$, the desired conclusion follows.
\end{proof}

\begin{claim}\label{claimpowers}
Let $x \in V$ be isotropic and non-zero. Assume that
$(\F x\oplus \calV x)^\bot$ contains a non-isotropic vector. Then,
$v^k(x)$ is orthogonal to $x$ for every $v \in \calV$ and every integer $k \geq 0$.
Moreover, $\F x \oplus \calV x$ is totally singular.
\end{claim}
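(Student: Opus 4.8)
The plan is to deduce both statements from Claim \ref{wedgeclaim} and the Orthogonality Lemma for Tensors (Proposition \ref{ortholemmatensors}), with the parity behaviour of powers of a $b$-alternating endomorphism carrying the rest of the weight. Note first the automatic inclusion $\calV x \subset \{x\}^\bot$: for every $v \in \calV$ the bilinear form $(a,c) \mapsto b(a,v(c))$ is alternating, so $b(x,v(x))=0$. Since also $b(x,x)=0$ by isotropy of $x$, proving that $\F x \oplus \calV x$ is totally $b$-singular reduces to showing $b(v(x),w(x))=0$ for all $v,w \in \calV$.

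To get the latter, I would use the standing hypothesis: because $(\F x \oplus \calV x)^\bot$ contains a non-isotropic vector, Claim \ref{wedgeclaim} gives $x \wedge_b w(x) \in \calV$ for every $w \in \calV$. Applying Proposition \ref{ortholemmatensors} with tensor partner $y := w(x)$ (which lies in $\{x\}^\bot$) and exponent $k=1$ --- legitimate since $\F$ is infinite --- yields $b(w(x),v(x))=0$ for all $v \in \calV$. Hence $\F x \oplus \calV x$ is totally singular.

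For the power statement, fix $v \in \calV$ and $k \geq 0$. The cases $k=0$ (isotropy of $x$) and $k$ odd (then $v^k$ is $b$-alternating, since an odd power of a $b$-alternating endomorphism lies in $\calA_b$, so $b(x,v^k(x))=0$ automatically) are immediate. The substantive case is $k=2m$ with $m \geq 1$. Here I would use $v \in \calA_b$ to slide a factor of $v$ across the form, $b(x,v^{2m}(x)) = -b(v(x),v^{2m-1}(x))$, and then invoke Proposition \ref{ortholemmatensors} once more with tensor partner $y := v(x) \in \{x\}^\bot$ (valid because $x \wedge_b v(x) \in \calV$ by Claim \ref{wedgeclaim}) and with the odd exponent $2m-1 < |\F|$; this gives $b(v(x),v^{2m-1}(x))=0$, whence $b(x,v^{2m}(x))=0$.

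The argument is essentially bookkeeping: the only points demanding care are verifying at each step that the tensor partner passed to Proposition \ref{ortholemmatensors} is $b$-orthogonal to $x$ (which always holds for elements of $\calV x$, as observed) and that the exponent used is odd, so that the Orthogonality Lemma for Tensors applies. I do not expect a real obstacle here; the genuine content lies upstream, in Claim \ref{wedgeclaim}, which in turn relies on the stability under cubes of maximal spaces (Lemma \ref{cubeslemma} and Corollary \ref{cubescor}).
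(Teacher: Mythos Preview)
Your proof is correct and follows essentially the same approach as the paper: both rely on Claim~\ref{wedgeclaim} to place $x \wedge_b v(x)$ inside $\calV$, and then invoke the Orthogonality Lemma for Tensors (Proposition~\ref{ortholemmatensors}) with an odd exponent. The only cosmetic difference is that the paper first establishes the power statement and then reads off total singularity from the special case $k=2$ (via $b(v(x),v(x))=-b(x,v^2(x))=0$ and polarization), whereas you prove total singularity directly by applying Proposition~\ref{ortholemmatensors} with $y=w(x)$ and exponent~$1$; this is arguably cleaner but not substantively different.
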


\begin{proof}
Let $v \in \calV$.
The first result is already known for odd $k$ because $v$ is $b$-alternating; on the other hand it is known for $k=0$.
Now, let $k$ be a non-zero even integer. We write $b(x,v^k(x))=-b(v(x),v^{k-1}(x))$.
By Claim \ref{wedgeclaim}, $x \wedge_b v(x)$ belongs to $\calV$. By the Orthogonality Lemma for Tensors,
we deduce that $b(v(x),v^{k-1}(x))=0$ and the first claimed statement ensues.
It follows in particular that $b(x,v(x))=0$, $b(v(x),v(x))=-b(x,v^2(x))=0$ and $b(x,x)=0$ for all $v \in \calV$, which
yields that $\F x \oplus \calV x$ is totally singular.
\end{proof}

From there, we split the study into two subcases, whether $\calV$ is pure or not.

\subsection{Case 1: $\calV$ is pure}

Here, we assume that $\calV$ is pure.

\begin{claim}\label{halfdimclaim}
Let $u \in \calV$ have nilindex $p$, and let $x \in \im u^{p-1} \setminus \{0\}$.
Then, $\dim(\F x \oplus \calV x) \leq \frac{n}{2}$, and the inequality is strict if
$\F x\oplus \calV x$ is totally singular.
\end{claim}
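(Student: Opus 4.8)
The plan is to reduce the inequality to a statement about $\calV x$ inside the non-degenerate quotient $\overline V:=\{x\}^\bot/\F x$, and then to control that space by means of the inductive description of $\calV\modu x$. First I would record the preliminary inclusions. Since $x\in\im u^{p-1}\subseteq\Ker u$ and every element of $\calV$ is $b$-alternating, for $v\in\calV$ and $z\in V$ one has $b\bigl(x,v(z)\bigr)=-b\bigl(v(x),z\bigr)$; applied to $u$ this gives $\im u\subseteq\{x\}^\bot$, and Proposition~\ref{tangentprop} together with Claim~\ref{orthotangentclaim} gives $\calV x\subseteq u(\{x\}^\bot)\subseteq\im u$. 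Hence $W:=\F x\oplus\calV x\subseteq\im u\subseteq\{x\}^\bot$. As $x$ is isotropic, $\F x$ is the radical of the restriction of $b$ to $\{x\}^\bot$, so $\overline b$ is non-degenerate on $\overline V$ (with Witt index $\nu-1$), and the image $\overline W$ of $W$ has dimension $\dim\calV x$. The Strong Orthogonality Lemma gives $L_{\calV,x}=W^\bot$; passing to the quotient, the $\overline b$-orthogonal complement of $\overline W$ is $L_{\calV,x}/\F x$, of dimension $(n-2)-\dim\overline W$. Thus $\dim W=1+\dim\overline W$, and the claim becomes $\dim\overline W\leq\frac{n-2}{2}$, with strict inequality in the totally singular case.

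Next I would bring in the induction hypothesis. Because $u(x)=0$, the endomorphism $u$ lies in $\calU_{\calV,x}$ and induces $\overline u\in\calV\modu x$. By the Preservation of Maximality Lemma, $\calV\modu x$ is a nilpotent subspace of $\calA_{\overline b}$ of maximal dimension, so the induction hypothesis (applied in dimension $n-2$, which is licit since $\F$ is infinite) yields $\calV\modu x=\WA_{\overline b,\calG}$ for some maximal partially complete $\overline b$-singular flag $\calG=(G_0,\dots,G_{\nu-1})$; in particular $\overline u$ stabilizes $\calG$ and, by Lemma~\ref{SETIlemma}, maps $G_{\nu-1}^\bot$ into the totally $\overline b$-singular subspace $G_{\nu-1}$. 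Claim~\ref{orthotangentclaim} writes each $v(x)$ (for $v\in\calV$) as $u(-x_1)$ with $x_1\in\{x\}^\bot$, so $\overline{v(x)}=\overline u(-\overline{x_1})\in\im\overline u$, i.e.\ $\overline W\subseteq\im\overline u$. The core of the argument is then the bound $\dim\overline W\leq\frac{n-2}{2}$: decomposing $\overline V=G_{\nu-1}^\bot\oplus C$ with $\dim C=\nu-1$ and using $\overline u(G_{\nu-1}^\bot)\subseteq G_{\nu-1}$ shows $\im\overline u\subseteq G_{\nu-1}+\overline u(C)$, and one then combines this with the Jordan shape of $\overline u$ — which, by Claim~\ref{Jordanstruct1} applied to $u$ and by the constraints $\rk u\leq 2\nu$ and $\rk\overline u=\rk u-2$, is forced to be a single large Jordan cell of size $p-2$ together with cells of size at most $3$ — and with the value of the generic nilindex of $\calV\modu x$ provided by Lemma~\ref{genericnilindexalt}, to conclude.

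For the strict inequality, if $W$ is totally $b$-singular then $\overline W$ is totally $\overline b$-singular, hence $\dim\overline W\leq\nu-1$; this already gives $\dim W\leq\nu\leq\frac n2$, with strict inequality as soon as $n>2\nu$. In the balanced case $n=2\nu$ one has $\dim\overline V=2(\nu-1)$ with Witt index $\nu-1$, so Corollary~\ref{MaxRankNil} applied to the $\overline b$-alternating endomorphism $\overline u$ bounds $\rk\overline u\leq 2(\nu-1)-2$; since $\overline W\subseteq\im\overline u$, this forces $\dim\calV x=\dim\overline W\leq\nu-2$, i.e.\ $\dim W\leq\nu-1<\frac n2$, which disposes of the remaining case.

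The step I expect to be the main obstacle is the dimension estimate $\dim\overline W\leq\frac{n-2}{2}$: the naive bound $\dim\overline W\leq\rk\overline u\leq 2(\nu-1)$ is far too weak when $2\nu<n<4\nu-2$, so one genuinely has to exploit both the precise Jordan structure of $\overline u$ inside the extremal space $\WA_{\overline b,\calG}$ and the position of $\im\overline u$ relative to the totally singular subspace $G_{\nu-1}$ — this is where purity of $\calV$, Claim~\ref{Jordanstruct1} and Lemma~\ref{genericnilindexalt} all have to be used together, and possibly also the $u^2$-stability of $\calV x$ from Claim~\ref{stabu2} to cut $\overline W$ down further inside $\im\overline u$.
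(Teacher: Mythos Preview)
Your proposal does not actually prove the claim: the central estimate $\dim\overline W\leq\frac{n-2}{2}$ is left as ``the main obstacle'', with only a sketch of ingredients (Jordan shape of $\overline u$, position relative to $G_{\nu-1}$, perhaps $u^2$-stability) and no argument tying them together. The paper's proof does \emph{not} pass to the quotient or invoke the inductive description of $\calV\modu x$; instead it works directly with a Jordan basis of $u$ in $V$ and uses Claim~\ref{stabu2} in an essential way. With $F_i:=\Vect(e_1,\dots,e_i)$ one shows $\calV x\cap F_{2i+1}=\calV x\cap F_{2i}$ for all $i$: if some $y\in\calV x\cap F_{2i+1}$ had a nonzero $e_{2i+1}$-component, then $(u^2)^i y$ would be a nonzero multiple of $e_1=x$ lying in $\calV x$, contradicting $\F x\cap\calV x=\{0\}$. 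This parity argument is what halves the naive bound $\dim\calV x\leq p-2$ down to $\dim\calV x\leq\frac{p-3}{2}$ (in the one-cell case), and is precisely the missing idea in your outline.

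Your treatment of the strict inequality in the balanced case $n=2\nu$ also contains an error. From $\overline W\subseteq\im\overline u$ and $\rk\overline u\leq 2(\nu-1)-2=2\nu-4$ you conclude $\dim\overline W\leq\nu-2$, but $2\nu-4>\nu-2$ as soon as $\nu>2$, so this implication fails. In the paper's argument no such case split on $n=2\nu$ versus $n>2\nu$ is needed: in Case~1 (single Jordan cell of size $p$) one gets $\dim(\F x\oplus\calV x)\leq\frac{p-1}{2}<\frac n2$ outright; in Case~2 (an extra cell of size $3$) equality $\dim(\F x\oplus\calV x)=\frac n2$ forces $\F x\oplus\calV x$ to contain the non-isotropic vector $f_2$ (via Lemma~\ref{JordancellLemma}), so the space cannot be totally singular.
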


\begin{proof}
\textbf{Case 1:} $u$ has one Jordan cell of size $p$ and all its other Jordan cells have size $1$. \\
Hence $\F x=\im u^{p-1}$, so that $\{x\}^\bot=\Ker u^{p-1}$.
Then, there is a basis $\bfB=(e_1,\dots,e_p,f_1,\dots,f_{n-p})$ of $V$ in which
$u(e_i)=e_{i-1}$ for all $i \in \lcro 2,p\rcro$, and $u(e_1)=u(f_1)=\cdots=u(f_{n-p})=0$. We note that $\im u^{p-1}=\F x$ and
$$\{x\}^\bot=\Ker u^{p-1}=\Vect(e_1,\dots,e_{p-1},f_1,\dots,f_{n-p}).$$
By Claim \ref{orthotangentclaim}, we have $\calV x \subset u(\{x\}^\bot)=\Vect(e_1,\dots,e_{p-2})$.
For $i \in \lcro 0,p\rcro$, set $F_i:=\Vect(e_1,\dots,e_i)$.
We claim that $\calV x \cap F_{2i+1}=\calV x \cap F_{2i}$ for all $i \in \lcro 0,(p-1)/2\rcro$.
Indeed, assuming otherwise, there would be an index $i \in \lcro 0,(p-1)/2\rcro$ and
 a vector $y$ of $\calV x  \cap F_{2i+1}$ with a non-zero coefficient $\alpha$ on $e_{2i+1}$
in the basis $\bfB$: applying $(u^2)^i$ would yield that $\alpha e_1 \in \calV x$ (see Claim \ref{stabu2}), contradicting the fact that $\calV x$ is linearly disjoint from $\F x$. Then, as $\dim (\calV x \cap F_{2i}) \leq \dim (\calV x \cap F_{2i-1})+1$
for all $i \in \lcro 1,(p-1)/2\rcro$, we obtain by induction that
$\dim (\calV x \cap F_{2i+1}) \leq i$ for all $i \in \lcro 0,(p-1)/2\rcro$. In particular,
$$\dim \calV x=\dim (\calV x \cap F_{p-2}) \leq \frac{p-3}{2},$$
and hence
$$\dim(\F x\oplus \calV x) \leq \frac{p-1}{2}<\frac{n}{2}\cdot$$

\vskip 2mm
\noindent \textbf{Case 2:} $p>3$ and $u$ has a Jordan cell of size $3$. \\
Then, by Claim \ref{Jordanstruct1} together with Proposition \ref{Jordansymalt}, we have
a decomposition $V=V_0 \overset{\bot}{\oplus} V_1 \overset{\bot}{\oplus} V_2$
into a $b$-orthogonal direct sum, in which all subspaces $V_0,V_1,V_2$ are stable under $u$,
and the resulting endomorphisms are, respectively, a Jordan cell of size $p$, a Jordan cell of size $3$, and the zero endomorphism of $V_2$.

We take a Jordan basis $(e_1,\dots,e_p)$ for $u_{V_0}$, a Jordan basis $(f_1,f_2,f_3)$ for $u_{V_1}$
and a basis $(g_1,\dots,g_{n-p-3})$ of $V_2$. Hence, $\bfB:=(e_1,\dots,e_p,f_1,f_2,f_3,g_1,\dots,g_{n-p-3})$ is a basis of $V$.
Again, $\im u^{p-1}=\F x=\F e_1$, whence
$$\{x\}^\bot=\Ker u^{p-1}=\Vect(e_1,\dots,e_{p-1},f_1,f_2,f_3,g_1,\dots,g_{n-p-3}).$$
Set $F_i:=\Vect(e_1,\dots,e_i)$ for all $i \in \lcro 0,p\rcro$.
We deduce from
Claim \ref{orthotangentclaim} that $\F x \oplus \calV x \subset F_{p-2} \oplus \Vect(f_1,f_2)$.
Combining this with Claim \ref{stabu2}, we find $u^2(\F x \oplus \calV x) \subset \calV x \cap F_{p-4}$.
Besides, with the same line of reasoning as in the first case above, we find
$$\dim(\calV x \cap F_{p-4}) \leq \frac{p-5}{2}\cdot$$
Noting that $\Ker u^2 \cap (\F x \oplus \calV x) \subset \Vect(e_1,e_2,f_1,f_2)$, the rank theorem yields
$$\dim(\F x \oplus \calV x) \leq \dim(\calV x \cap F_{p-4})+4 \leq \frac{p+3}{2} \leq \frac{n}{2},$$
and if equality holds then $\F x \oplus \calV x$ includes $\Vect(e_1,e_2,f_1,f_2)$, and in particular
it contains the vector $f_2$, which is non-isotropic by Lemma \ref{JordancellLemma}.
\end{proof}

\begin{claim}\label{claimexistnonisotropic}
Let $u \in \calV$ have nilindex $p$, and let $x \in \im u^{p-1} \setminus \{0\}$.
Then, $(\F x\oplus \calV x)^\bot$ contains a non-isotropic vector.
\end{claim}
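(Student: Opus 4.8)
The plan is to argue by contradiction, feeding the dimension bound of Claim~\ref{halfdimclaim} into the elementary duality between a subspace and its $b$-orthogonal complement. Set $W:=\F x\oplus\calV x$, and suppose that $W^\bot$ contains no non-isotropic vector; since $\car\F\neq 2$, this amounts to saying that $W^\bot$ is totally singular (the equivalence is the usual polarization identity for the symmetric form $b$).

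First I would invoke the standard fact that, $b$ being non-degenerate on the finite-dimensional space $V$, one has $(W^\bot)^\bot=W$. Total singularity of $W^\bot$ then reads $W^\bot\subset(W^\bot)^\bot=W$, so that $n-\dim W=\dim W^\bot\leq\dim W$, i.e.\ $\dim W\geq\frac{n}{2}$. Combined with the inequality $\dim W\leq\frac{n}{2}$ from Claim~\ref{halfdimclaim}, this forces $\dim W=\frac{n}{2}$, and hence $W^\bot=W$.

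But $W^\bot=W$ means in particular that $W$ is totally singular, and the sharper half of Claim~\ref{halfdimclaim} then forces the strict inequality $\dim W<\frac{n}{2}$, contradicting $\dim W=\frac{n}{2}$. Therefore $W^\bot$ is not totally singular, i.e.\ it contains a non-isotropic vector, which is exactly the desired conclusion.

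I do not expect any genuine obstacle in this step: all the real work has already gone into Claim~\ref{halfdimclaim}, where the precise Jordan structure of an element of nilindex $p$ (Claim~\ref{Jordanstruct1}), the orthogonality refinement of Claim~\ref{orthotangentclaim}, and the $u^2$-stability of Claim~\ref{stabu2} are exploited. The only point worth stating with a word of care is the equivalence between "containing a non-isotropic vector" and "not being totally singular", which relies on $\car\F\neq 2$; everything else is a two-line dimension count using $(W^\bot)^\bot=W$.
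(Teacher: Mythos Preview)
Your argument is correct and is essentially the paper's own proof: both set up a contradiction, use Claim~\ref{halfdimclaim} to bound $\dim(\F x\oplus\calV x)\leq n/2$, then observe that total singularity of the orthogonal forces $(\F x\oplus\calV x)^\bot=\F x\oplus\calV x$, whence $\F x\oplus\calV x$ itself is totally singular, contradicting the strict inequality in Claim~\ref{halfdimclaim}. The only cosmetic difference is that the paper names $G:=(\F x\oplus\calV x)^\bot$ and runs the dimension count on $G$ rather than on $W$.
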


\begin{proof}
Assume on the contrary that $G:=(\F x \oplus \calV x)^\bot$ is totally singular.
The space $G$ has dimension at least $\frac{n}{2}$, due to Claim \ref{halfdimclaim}. Therefore, $G^\bot=G$ because
$G \subset G^\bot$. Then, $\F x \oplus \calV x=G^\bot=G$ would be totally singular.
Using Claim \ref{halfdimclaim} once more, it would follow that $\dim G < \frac{n}{2}$, contradicting what we have just found.
\end{proof}

\begin{claim}
The subspace $K(\calV)$ is totally singular.
\end{claim}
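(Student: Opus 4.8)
The plan is to prove the stronger pointwise statement that $b(x,y)=0$ for all $x,y\in\calV^{\bullet}$; since $K(\calV)=\Vect(\calV^{\bullet})$, bilinearity then immediately gives $K(\calV)\subset K(\calV)^{\bot}$, which is exactly the assertion. So I fix $x,y\in\calV^{\bullet}\setminus\{0\}$ and argue by contradiction, assuming $b(x,y)\neq 0$.

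First I would record what purity gives for a non-zero vector $z\in\calV^{\bullet}$. By definition there is $u\in\calV$ with $z\in\im u^{p-1}$; since $z\neq 0$ this forces $\ind(u)=p$, and purity forces $\rk u^{p-1}=1$, hence $\im u^{p-1}=\F z$. Applying Lemma \ref{stableortholemma}(b) to $u^{p-1}\in\calS_b\cup\calA_b$ gives $\Ker u^{p-1}=(\im u^{p-1})^{\bot}=\{z\}^{\bot}$. Moreover $z$ is isotropic (by the claim already established that every vector of $\calV^{\bullet}$ is isotropic), so Claim \ref{claimexistnonisotropic} applies to the pair $(u,z)$ and shows that $(\F z\oplus\calV z)^{\bot}$ contains a non-isotropic vector; Claim \ref{claimpowers} then yields $b\bigl(z,v^{k}(z)\bigr)=0$ for every $v\in\calV$ and every integer $k\geq 0$.

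Now I would finish as follows. Pick $u\in\calV$ with $\ind(u)=p$ and $\im u^{p-1}=\F x$, as in the previous step applied to $z=x$. Then $u^{p-1}(y)\in\im u^{p-1}=\F x$, say $u^{p-1}(y)=\beta x$ with $\beta\in\F$. Applying the previous step to $z=y$, with $v=u$ and $k=p-1$, gives $0=b\bigl(y,u^{p-1}(y)\bigr)=\beta\,b(y,x)$; since $b(x,y)\neq 0$ this forces $\beta=0$, so $y\in\Ker u^{p-1}=\{x\}^{\bot}$, i.e.\ $b(x,y)=0$, a contradiction. Hence $b(x,y)=0$ for all $x,y\in\calV^{\bullet}$ and $K(\calV)$ is totally singular.

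There is no serious obstacle here: the work has already been done in Claims \ref{claimexistnonisotropic} and \ref{claimpowers}, and — in contrast with the symplectic case — no density argument is needed. The one point to watch is to invoke Claim \ref{claimpowers} at the vector $y$ (not $x$), so that pushing $y$ through $u^{p-1}$ lands it on the line $\F x$ and produces the pairing $b(y,x)$; everything else is bookkeeping about the hypotheses of those two claims.
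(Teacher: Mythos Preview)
Your argument is correct and follows essentially the same route as the paper: both proofs take $x,y\in\calV^{\bullet}\setminus\{0\}$ with $b(x,y)\neq 0$, use purity to pin down $\im u^{p-1}$ as a line, and derive a contradiction by combining Claims \ref{claimexistnonisotropic} and \ref{claimpowers}. The only cosmetic difference is that the paper applies those claims at $x$ using an operator $v$ with $\im v^{p-1}=\F y$ (so $v^{p-1}(x)$ lands on $\F y$), whereas you apply them at $y$ using an operator $u$ with $\im u^{p-1}=\F x$; the two are mirror images of one another.
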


\begin{proof}
Let $x$ and $y$ belong to $\calV^\bullet \setminus \{0\}$. Assume that $b(x,y) \neq 0$.
Since $\calV$ is pure, we can choose $u \in \calV$ and $v \in \calV$ such that $\im u^{p-1}=\F x$ and $\im v^{p-1}=\F y$.
Thus, $x \not\in (\im v^{p-1})^\bot=\Ker v^{p-1}$, and hence
$v^{p-1}(x) \neq 0$. It follows that $y$ is collinear with $v^{p-1}(x)$.
Combining Claims \ref{claimpowers} and \ref{claimexistnonisotropic}, we find that
$x$ is orthogonal to $v^{p-1}(x)$, which contradicts the fact that $x$ is non-orthogonal to $y$.
We conclude that $x$ is orthogonal to $y$. Varying $x$ and $y$ yields the claimed result.
\end{proof}

The remainder of the proof is similar to the one in Section \ref{endaltsym}: the only formal difference is that
symmetric tensors must be replaced with alternating tensors. One obtains that
$\calV x=\{0\}$ for some $x \in \calV^\bullet \setminus \{0\}$, which completes the proof.

\subsection{Case 2: $\calV$ is not pure}

Here, we assume that $\calV$ is not pure.
By Claim \ref{Jordanstruct2}, the generic nilindex in $\calV$ is $3$ and there
is an element $u$ of $\calV$ that has exactly two Jordan cells of size $3$, with all remaining Jordan cells of size $1$.
By Proposition \ref{Jordansymalt}, $V$ splits into a $b$-orthogonal direct sum $V=V_1 \overset{\bot}{\oplus} V_2 \overset{\bot}{\oplus} V_3$
in which $u$ stabilizes $V_1$ and $V_2$ and induces Jordan cells of size $3$ on those spaces, and $V_3 \subset \Ker u$.
We choose respective Jordan bases $(e_1,e_2,e_3)$ and $(f_1,f_2,f_3)$ of $V_1$ and $V_2$ for $u$.

\begin{claim}
Every element of $\calV$ stabilizes $\im u^2$.
\end{claim}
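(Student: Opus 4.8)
The plan is to prove the two inclusions $\calV e_1 \subseteq \Vect(e_1,f_1)$ and $\calV f_1 \subseteq \Vect(e_1,f_1)$; this suffices because $\im u^2=\Vect(e_1,f_1)$ is spanned by $e_1$ and $f_1$, so once both inclusions hold every $v\in\calV$ maps $\im u^2$ into itself. Recall that here the generic nilindex $p$ equals $3$ (by Claim \ref{Jordanstruct2}) and $\ind(u)=3$, so $\im u^2=\im u^{p-1}$ and both $e_1$ and $f_1$ lie in $\calV^\bullet$; in particular they are non-zero and $b$-isotropic, since they belong both to $\Ker u$ and to $\im u=(\Ker u)^\bot$.

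First I would pin down the Gram matrix of $b$ on the two size-$3$ blocks, using only that $u$ is $b$-alternating and that $V_1,V_2$ are $b$-regular. From $b$-alternation one gets at once $b(e_1,e_1)=b(e_1,e_2)=0$; since the restriction to $V_1$ of $x\mapsto b(e_1,x)$ is a non-zero linear form vanishing on $e_1$ and $e_2$ (by $b$-regularity of $V_1$), its kernel in $V_1$ is exactly $\Vect(e_1,e_2)$, so $b(e_1,e_3)\neq 0$; then $b(e_2,e_2)=-b(u^2e_3,e_3)=-b(e_1,e_3)\neq 0$, and symmetrically on $V_2$. Combining these facts with the $b$-orthogonality of $V_1,V_2,V_3$ yields $\{e_1\}^\bot=\Vect(e_1,e_2)\overset{\bot}{\oplus}V_2\overset{\bot}{\oplus}V_3$ and $\{f_1\}^\bot=V_1\overset{\bot}{\oplus}\Vect(f_1,f_2)\overset{\bot}{\oplus}V_3$, whence, reading off the action of $u$ on the Jordan bases, $u(\{e_1\}^\bot)=\Vect(e_1,f_1,f_2)$ and $u(\{f_1\}^\bot)=\Vect(e_1,e_2,f_1)$.

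Next, applying Claim \ref{orthotangentclaim} with $x=e_1$ and then with $x=f_1$ (both legitimate since $e_1,f_1\in\im u^2\setminus\{0\}$) gives $\calV e_1\subseteq u(\{e_1\}^\bot)=\Vect(e_1,f_1,f_2)$ and $\calV f_1\subseteq u(\{f_1\}^\bot)=\Vect(e_1,e_2,f_1)$. To remove the unwanted $f_2$- and $e_2$-directions I would call on Claim \ref{claimpowers}: the vector $e_2$ is orthogonal to $e_1$ (by $b$-alternation) and to all of $V_2$ (block orthogonality), hence to $\Vect(e_1,f_1,f_2)\supseteq \F e_1\oplus\calV e_1$, while $b(e_2,e_2)\neq 0$; thus $(\F e_1\oplus\calV e_1)^\bot$ contains the non-isotropic vector $e_2$, and Claim \ref{claimpowers} tells us $\F e_1\oplus\calV e_1$ is totally $b$-singular. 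But on $\Vect(e_1,f_1,f_2)$ the only non-zero Gram entry is $b(f_2,f_2)$, so any totally singular subspace of $\Vect(e_1,f_1,f_2)$ must have zero component along $f_2$ and is therefore contained in $\Vect(e_1,f_1)$; hence $\calV e_1\subseteq\Vect(e_1,f_1)=\im u^2$. The mirror-image argument, using that $f_2$ is a non-isotropic vector of $(\F f_1\oplus\calV f_1)^\bot$, gives $\calV f_1\subseteq\im u^2$, and the claim follows.

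The step I expect to demand the most care is not conceptual but bookkeeping: one must be sure that the orthogonal complement of $\F x\oplus\calV x$ really does contain a non-isotropic vector (so Claim \ref{claimpowers} applies) and that the resulting total-singularity constraint genuinely cuts $\calV x$ down to $\im u^2$ and not to something bigger. Both hinge on knowing the Gram matrix of $b$ on the size-$3$ blocks precisely enough — exactly the inequalities $b(e_1,e_3)\neq0$ and $b(e_2,e_2)\neq0$ — which is why the block-form computation is done first and extracted purely from $b$-alternation and $b$-regularity, with no need to renormalize the chosen Jordan bases.
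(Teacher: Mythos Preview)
Your proof is correct and follows essentially the same route as the paper's: bound $\calV e_1$ via Claim~\ref{orthotangentclaim}, produce a non-isotropic vector in $(\F e_1\oplus\calV e_1)^\bot$, invoke Claim~\ref{claimpowers} for total singularity, and read off the inclusion in $\Vect(e_1,f_1)$. Your direct exhibition of $e_2$ as the non-isotropic witness is in fact slightly cleaner than the paper's argument, which instead bounds $\dim(\F e_1\oplus\calV e_1)\leq 3\leq n/2$ and then reruns the dimension-counting contradiction of Claim~\ref{claimexistnonisotropic} to conclude that the orthogonal complement cannot be totally singular.
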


\begin{proof}
Note that $\im u^2=\Vect(e_1,f_1)$.

We shall prove that $\calV e_1 \subset \Vect(e_1,f_1)$.

Noting that $\{e_1\}^\bot=\Vect(e_1,e_2) \oplus V_2 \oplus V_3$, we deduce from Claim \ref{orthotangentclaim}
that $\F e_1 \oplus \calV e_1 \subset \Vect(e_1,f_1,f_2)$. Hence, $\dim(\F e_1 \oplus \calV e_1) \leq 3 \leq \frac{n}{2}\cdot$

Now, note that the isotropic vectors of $\Vect(e_1,f_1,f_2)$ are exactly the vectors of $\Vect(e_1,f_1)$.
Indeed, by Lemma \ref{JordancellLemma} the vector $f_2$ is non-isotropic, whereas
$b(e_1,e_1)=b(f_1,f_1)=b(e_1,f_1)=b(e_1,f_2)=b(f_1,f_2)=0$ since $e_1,f_1$ belong to $\Ker u$ and $e_1,f_1,f_2$ belong to $\im u$.

In particular, if $\dim(\F e_1 \oplus \calV e_1)=\frac{n}{2}$, then $\F e_1 \oplus \calV e_1$ contains the non-isotropic vector $f_2$.
Thus, with the same proof as for Claim \ref{claimexistnonisotropic}, we deduce that
$(\F e_1 \oplus \calV e_1)^\bot$ contains a non-isotropic vector.
It follows from  Claim \ref{claimpowers} that $\F e_1 \oplus \calV e_1$ is totally $b$-singular, and we deduce from the above that
$\calV e_1 \subset \Vect(e_1,f_1)=\im u^2$.

Symmetrically $\calV f_1 \subset \im u^2$ and we conclude that every element of $\calV$ stabilizes $\im u^2$.
\end{proof}

From here, it is easy to conclude. The $2$-dimensional totally singular subspace $\im u^2$
is stable under all the elements of $\calV$. The induced endomorphisms are nilpotent endomorphisms of $\im u^2$,
and by Gerstenhaber's theorem in dimension $2$
(or trivially if the resulting space of endomorphisms contains only the zero element) there is a non-zero vector $x$ of $\im u^2$ that is annihilated by all those endomorphisms. Hence, $\calV x=\{0\}$ and $x$ is isotropic: the proof is complete.

\end{document}